\documentclass[11pt]{article}


\usepackage{amsmath,amssymb}

\usepackage{times}
\usepackage{bm}
\usepackage{natbib}
\usepackage{algorithm}
\usepackage{xargs}[2008/03/08]
\usepackage{amssymb}
\usepackage{mathrsfs}
\usepackage{graphicx}
\usepackage{rotating}
\usepackage[flushleft]{threeparttable}
\usepackage{multirow}
\usepackage{enumitem} 

\usepackage{star}
\usepackage{subcaption}




\usepackage[colorlinks,
linkcolor=blue,
anchorcolor=blue,
citecolor=blue
]{hyperref}




\def\T{{ \mathrm{\scriptscriptstyle T} }}

\def\##1\#{\begin{align}#1\end{align}}
\def\$#1\${\begin{align*}#1\end{align*}}

\newcommand{\rF}{\textnormal{F}}


\def\T{{ \mathrm{\scriptscriptstyle T} }} 

\newcommand{\Rom}[1]{\text{\uppercase\expandafter{\romannumeral #1\relax}}}


\def\T{{ \mathrm{\scriptscriptstyle T} }}
\newtheorem{lem}{Lemma}

\newtheorem{rmk}{Remark}

\newcommand{\R}{\mathbb{R}}

\newcommand{\alcomment}[1]{{\textcolor{red}{#1}}}
\newcommand{\Prob}{\textnormal{pr}}
\newcommand{\error}{P}

\newcommand{\Ex}{E}
\newcommand{\nmin}{n_{\textnormal{min}}}

\def\T{{ \mathrm{\scriptscriptstyle T} }}
\newcommand{\mumax}{\mu_{\textnormal{max}}}
\newcommand{\mudiff}{\mu_{\textnormal{diff}}}
\newcommand{\snr}{\textsc{SNR}}
\newcommand{\sigmax}{\sigma_{\textnormal{max}}}
\def\mtr{\mathop{\rm tr}}

\def\dist{\textrm{dist}}
\def\rF{\textnormal{F}}

\renewcommand{\opnorm}[2]{|  |  | #1 |  |  |_{{#2}}}


\usepackage{txfonts}

\usepackage{geometry}
 \geometry{
 a4paper,
 left=35mm,
 top=30mm,
 }
\textwidth=5.5in


%

\begin{document}

\title{ \LARGE Exact Cluster Recovery via Classical Multidimensional Scaling}     


\author{Anna Little\footnotemark[1], ~Yuying Xie\thanks{Department of Computational Mathematics, Science and Engineering, Michigan State University; Email: \texttt{\{littl119,xyy\}@egr.msu.edu}.}, ~and~Qiang Sun\thanks{Department of Statistical Sciences, University of Toronto; Email:\texttt{qsun@utstat.toronto.edu}. }}


\date{ }

\maketitle


\begin{abstract}
Classical multidimensional scaling is an important dimension reduction technique. Yet few theoretical results characterizing its statistical performance exist. This paper provides a theoretical framework for analyzing the quality of embedded samples produced by classical multidimensional scaling. This lays the foundation for various downstream statistical analyses,  and we focus on  clustering  noisy data. Our results provide scaling conditions on the sample size, ambient dimensionality, between-class distance, and noise level under which classical multidimensional scaling followed by a {distance-based} clustering algorithm can recover the cluster labels of all samples with high probability. Numerical simulations confirm these scaling conditions are near-sharp. Applications to both human genomics data and natural language data lend strong support to the methodology and theory. 

\end{abstract}
\noindent
{\bf Keywords}: clustering, exact recovery, debiasing, dimension reduction, multidimensional scaling.

\section{Motivation and Background}

Embedding high dimensional data into a lower dimensional space is essential in many applications because the data pattern  is {then}  easier to visualize and  study. Multidimensional scaling (MDS) achieves this  by searching for a few coordinates to preserve pairwise distances. This approach originated with \cite{young1938discussion} but was fully developed by \cite{torgerson1952multidimensional}, and further popularized by \cite{gower1966some}. {Interestingly,} the method  does not require access to the original data coordinates. This  property leads to its popularity in many applications such as bioinformatics \citep{tzeng2008multidimensional}, psychology \citep{rau2016model, carroll1970analysis}, economics \citep{machado2015analysis}, network science \citep{chung2008graphical} and pattern recognition,  {where often} only pairwise distances {or correlations} are available. 
MDS and its variants  are also important components in {many} nonlinear dimension reduction {algorithms,} {e.g.} Isomap \citep{tenenbaum2000global} and  local multidimensional scaling \citep{venna2006local, chen2009local}. When pairwise distances are noisy, a trace norm penalty can be incorporated  \citep{zhang2016distance, negahban2011estimation, lu2005framework, yuan2007dimension}. We refer readers to  \cite{borg2005modern}  for a systematic overview.

Despite the {prevelance} of MDS,  few theoretical results characterizing its statistical performance under randomness exist \citep{fan2018principal}. The literature does not offer a systematic treatment on the influence of ambient noise {and data dimension} on the quality of the MDS embedding \citep{peterfreund2018multidimensional}. This paper provides a theoretical framework to study the quality of the {MDS} embedded samples {under a probabilistic model}. Specifically, we establish {an entrywise} bound for the embedding errors {produced by classical multidimensional scaling} (CMDS).  This lays the foundation for analyzing various downstream procedures. {We focus on clustering noisy data. Specifically, we analyze when the class labels can be exactly recovered by first computing the CMDS embedding and then applying a distance-based clustering algorithm to the embedded samples,}  where exact recovery means all cluster labels are inferred correctly.  There are several advantages of this two-step  {approach}, referred to as the CMDS and clustering procedure. {First,  it overcomes} the curse of dimensionality by reducing the {dimension before clustering}. Second, it does not require access to data coordinates (for example, see the People data sets in Section \ref{subsec:realworlddata}). Theoretically, we provide {general} scaling conditions {relating the signal-to-noise ratio ($\snr$), sample size, and ambient dimension which ensure} 
 exact recovery with high probability {for} the CMDS and clustering {procedure}. {We then illustrate the strong influence of the ambient dimension by deriving specific scaling conditions for the low, moderate, and high-dimensional regimes.}   Both simulations and real data studies support our theoretical findings. To the best of our knowledge, we are the first to provide sharp scaling conditions for exact cluster detection using the CMDS and clustering procedure, {although some recent works have considered the quality of the CMDS embedding under probabilistic models}.

Recently, \cite{li2018central} derived a central limit theorem characterizing the deviation of the CMDS embedding when the distance matrix is perturbed. Unlike their work, we do not assume the error matrix has independent entries, an assumption {that} often fails for distance matrices obtained from  data, and we derive non-asymptotic error bounds for the  CMDS {embedding}. Also, the results in \cite{li2018central} involve an unknown rotation matrix, and are thus less applicable for downstream data analysis. {In addition, \cite{peterfreund2018multidimensional}} propose procedures for {shrinking the {CMDS} eigenvalues to  improve the embedding quality under the spectral norm.}  In contrast,  {our}  loss function {is} based on an entrywise norm.  Furthermore, we {derive}  results for exact cluster recovery, {consider a more general noise model}, and provide a finite sample rather than asymptotic analysis.

\subsection{Review of Classical Multidimensional Scaling}
We briefly review the procedure of CMDS. Given the distance matrix $D \in \mathbb{R}^{N \times N}$ of $N$ data points  with the $(i,j)$th element being the Euclidean distance between the $i$th and $j$th samples, CMDS first obtains a matrix $B$ by applying double centering to the matrix of squared distances. Specifically, $B=-\frac{1}{2}JD^{(2)}J$ where {$D^{(2)}_{i,j} = D_{i,j}^2$}, $J=I_N-11^\T/N$ is the centering matrix, and $1\in \mathbb{R}^N$ is the column vector of all 1's.  Let $X=(x_1,\ldots, x_N)^\T  \in \mathbb{R}^{N \times d}$ be the possibly unobserved data matrix. 
It can be shown
\begin{align*}
B = -\frac{1}{2} JD^{(2)}J = (X- 1\hat{\mu}^\T )(X-1\hat{\mu}^\T )^\T {=(JX)(JX)^\T},
\end{align*}
where $\hat{\mu} = \sum_{i=1}^N x_i/N$ is the empirical mean of the data. See \cite{borg2005modern} for details. 
The coordinates of an $r$-dimensional embedding are then given by the rows of $Y=V_r \Lambda^{1/2}_r$, where  $\Lambda_r = \text{diag}(\lambda_1, \ldots, \lambda_r) \in \mathbb{R}^{r \times r}$ with $\lambda_i$ being the $i$th largest eigenvalue of $B$ and $V_r = (v_1, \ldots, v_r) \in \mathbb{R}^{N \times r}$  {with $v_r$ being} the  eigenvector {corresponding to $\lambda_r$}. Namely, each unobserved data point $x_i$ is mapped to  the $i$th row of $V_r\Lambda_r^{1/2}$. Mathematically, this embedding tries to  preserve pairwise distances  in $r$ dimensions by minimizing
{
	\begin{align*}
	\min_{Y \in \mathbb{R}^{N \times r}}\sum_{i,j=1}^N (D_{i,j}^X-D_{i,j}^Y)^2,
	\end{align*}
}
where $D^X, D^Y$ are the distance matrices corresponding to $X$ and $Y$, respectively.

We demonstrate this procedure on a toy example, by considering $5$ Gaussian distributions with means $\mu_k \in \mathbb{R}^{1000}$  for $k=1,\ldots, 5$, and a common covariance matrix {$\Sigma = 0.3I_{1000}$}.  The first two coordinates of $\mu_k$'s are $(0, 0), (1, 1), (1, -1), (-1, 1),$ and $(-1, -1)$ respectively,  while the rest of the coordinates are $0$s.  We sampled $200$ data points from each Gaussian distribution.  
The second plot in {Figure} \ref{fig:ExampleMDSEmbedding} shows that the CMDS embedding with only two dimensions can cluster the data into five clusters with high accuracy (97.8\%) while there is no visible separation using a randomly selected pair of original coordinates, as indicated by the left plot, {which} indicates the undesirable performances of random projection {\citep{vempala2005random}}.  The third plot in {Figure} \ref{fig:ExampleMDSEmbedding} shows the first two data coordinates colored by the labels returned by hierarchical clustering with Ward's method \citep{ward1963hierarchical} (accuracy 64\%); hierarchical clustering {performed} poorly due to the high-dimensional noise. The last plot shows the clustering accuracy of $k$-means on the CMDS embedding as a function of the embedding dimension $r$. The accuracy is high for small $r$ but declines as the dimension {increases}, illustrating the advantage of performing {dimension} reduction prior to clustering. 

\begin{figure}[t]
	\captionsetup[subfigure]{justification=centering}
	\centering
	\begin{subfigure}{.23\textwidth}
	\centering
		\includegraphics[width=\textwidth,keepaspectratio]{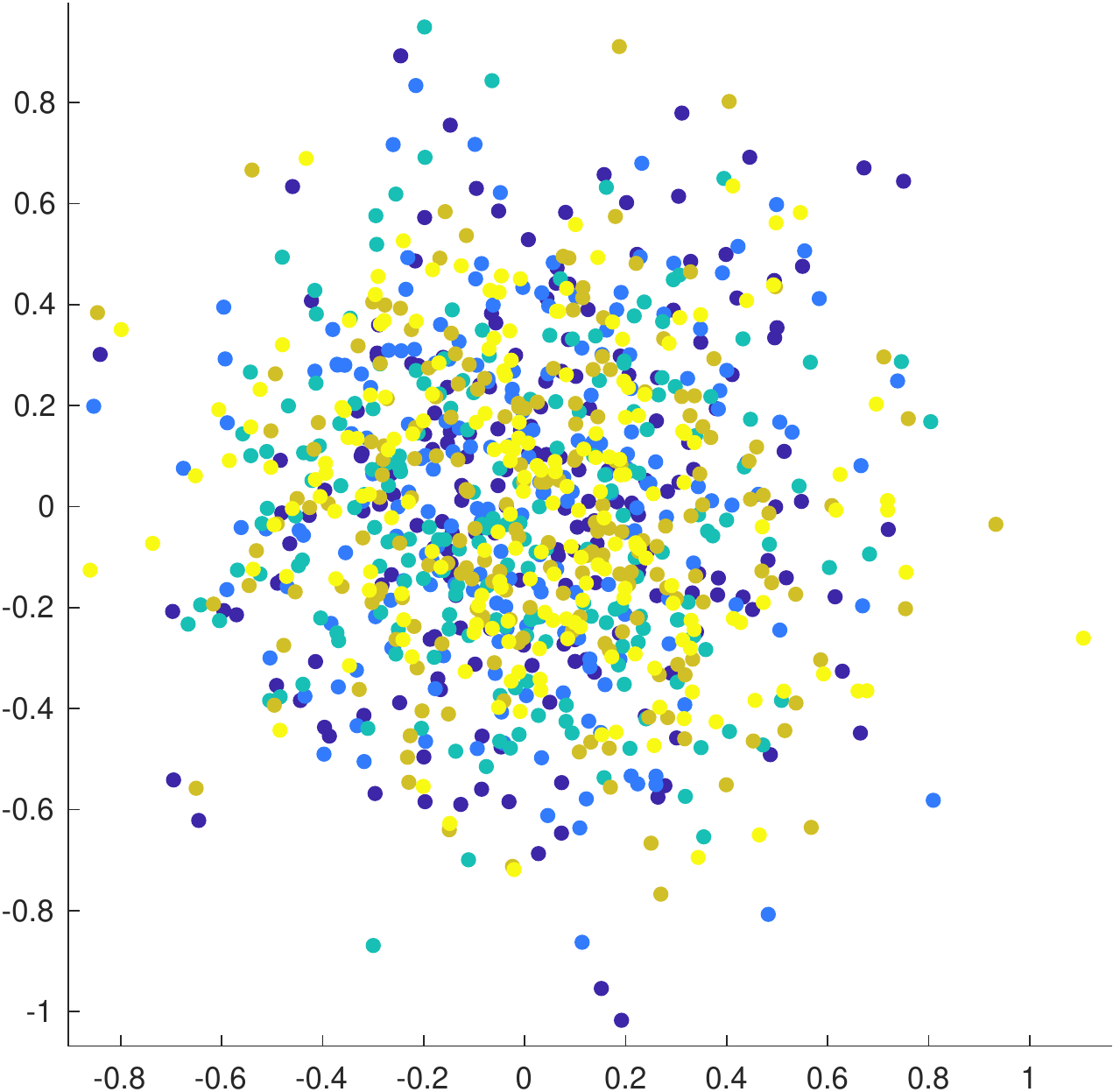}	
		\caption{\footnotesize Random coordinates}
		\label{fig:RandomDataCoordinates}
	\end{subfigure}	
	\hfill
	\begin{subfigure}{.23\textwidth}	
	\centering
		\includegraphics[width=\textwidth,keepaspectratio]{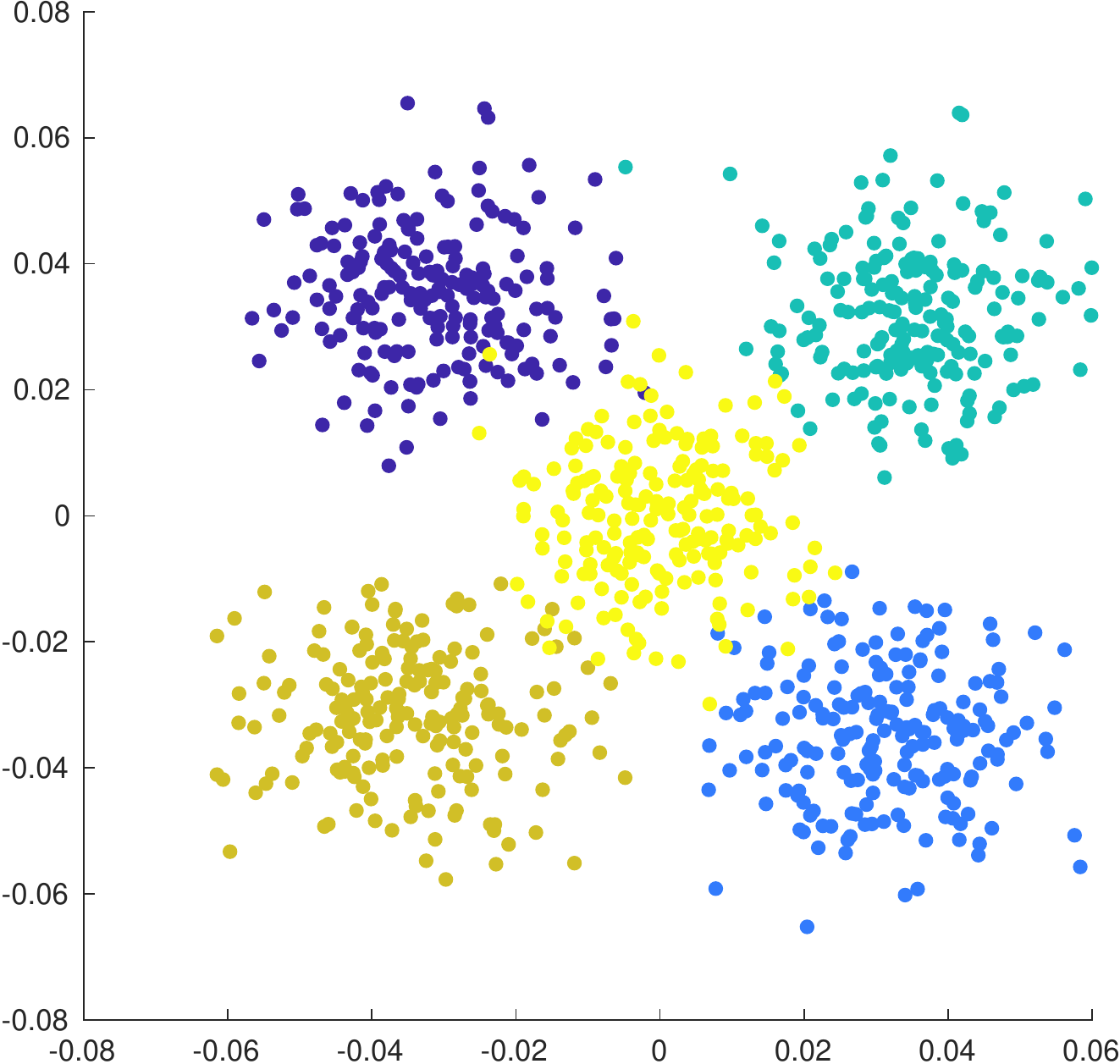}
		\caption{\footnotesize MDS embedding}
		\label{fig:MDSembedding}
	\end{subfigure}
	\hfill
	\begin{subfigure}{.23\textwidth}
	\centering
		\includegraphics[width=\textwidth,keepaspectratio]{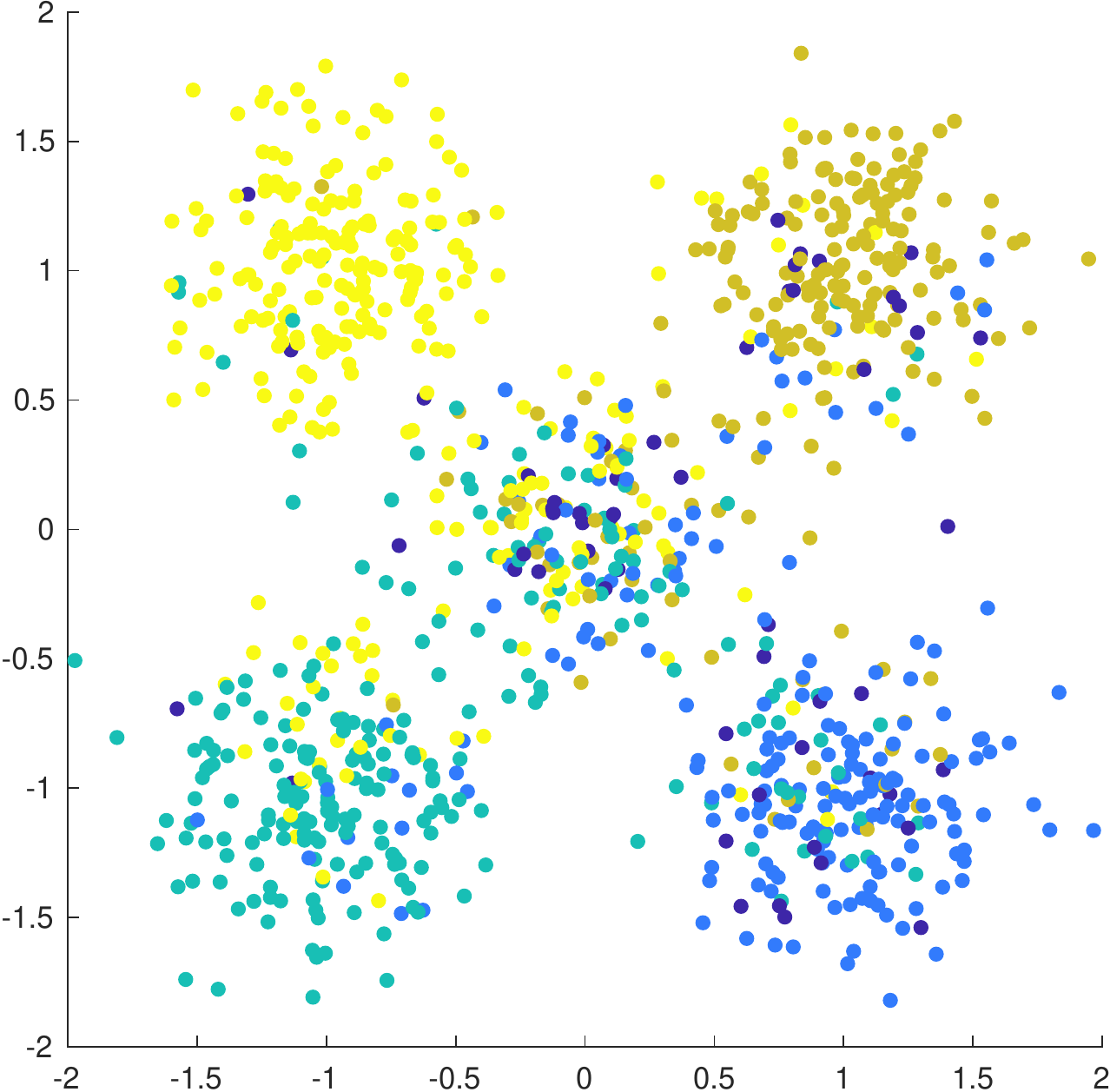}
		\caption{\footnotesize Hierarchical clusters}
		\label{fig:HierarchicalClustering}
	\end{subfigure}	 
	\hfill
	\begin{subfigure}{.23\textwidth}
	\centering
	\includegraphics[width=\textwidth,keepaspectratio]{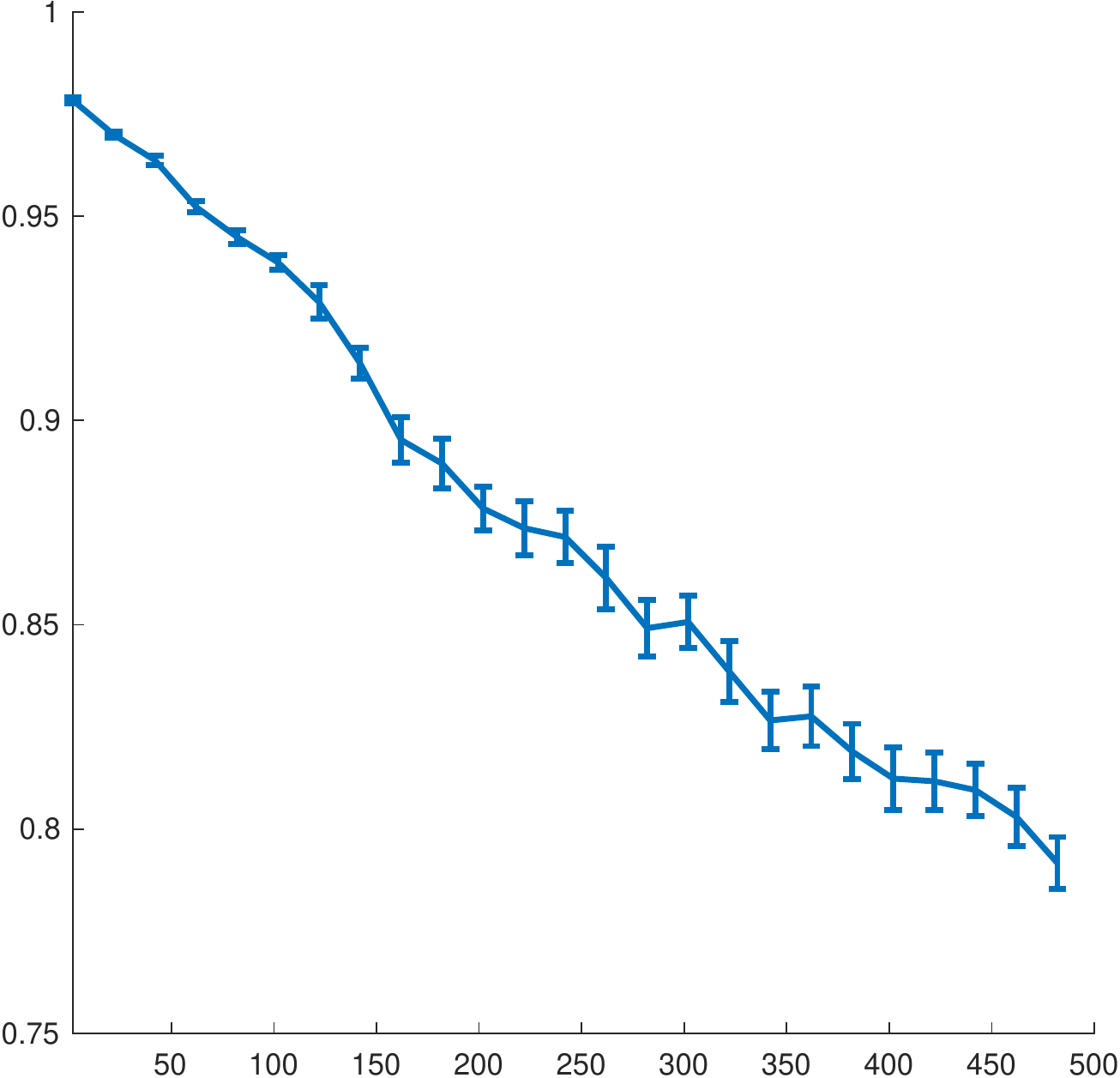}
	\caption{\footnotesize Clustering accuracy}
	\label{fig:AccuracyByDim}
	\end{subfigure}	 
	\caption{Figure \ref{fig:RandomDataCoordinates} shows two randomly selected data coordinates. Figure \ref{fig:MDSembedding} shows first two CMDS coordinates,
	colored by cluster identity. Figure \ref{fig:HierarchicalClustering} shows the first two data coordinates, colored by the labels returned by hierarchical clustering. Figure \ref{fig:AccuracyByDim} shows accuracy of $k$-means clustering on CMDS embedding as a function of embedding dimension $r$.}
	\label{fig:ExampleMDSEmbedding}
\end{figure}

\subsection{Notation}
We summarize the notation used throughout the paper. For any matrix $A = A_{i,j}$, the spectral norm is denoted by $\norm{A}_2$ and the Frobenius norm by $\| A\|_F$. Furthermore, $\norm{A}_{\max}=\max_{i,j}|A_{i,j}|$, $\norm{A}_{\infty} = \max_i \sum_{j} |A_{i,j}|$, $\norm{A}_{1} = \max_j \sum_{i} |A_{i,j}|$, and $A_{i\cdot}$ {is} the $i$th row of $A$.  For any vector $v$, $\norm{v}_2$ denotes the Euclidean norm and $\norm{v}_\infty = \max_i |v(i)|.$ Let $f(n)$ and $g(n)$ be functions that map integer $n$ to a positive real number, and let $c,b$ be positive constants and $n_0$ an integer. Then $f(n) = O(g(n))$ if $f(n) \leq c g(n)$ for all $n > n_0$; $f(n) = \Omega(g(n))$ if $f(n) \geq b g(n)$ for all $n > n_0$; and  $f(n) = \Theta(g(n))$ if $f(n) = O(g(n))$ and $f(n) = \Omega(g(n))$. Sometimes $f(n)\lesssim g(n)$ and $f(n)\gtrsim g(n)$ are used to denote $f(n)=O(g(n))$ and $f(n)=\Omega(g(n))$ respectively.

\section{Model Formulation}\label{sec:ModelFormulation}

{In this section we formulate precisely the model studied in this article. Section \ref{sec:DataModel} defines the noisy data model, Section \ref{sec:CMDSembedding} discusses how the CMDS embedding of the noisy data can be viewed as a perturbation of an ideal embedding, and Section \ref{sec:ExactRecovery} defines exact cluster recovery from the CMDS embedding.}  

\subsection{A Probabilistic Model} \label{sec:DataModel}

To investigate the theoretical properties of the CMDS and clustering procedure, we assume the unobserved data are drawn from $k$ clusters and model them as
\[
	x_i = \mu_{\ell_i} + \epsilon_i, \ \ 1 \leq i \leq N,
\]
{where $x_i \in \mathbb{R}^d$ is the $i$th sample, $\ell_i\in [k]:= \{1,2,\ldots,k\}$ is the cluster label of $x_i$, and $\epsilon_i$'s  are independently distributed   mean zero sub-Gaussian random errors. We say  $\epsilon \in\mathbb{R}^d$ is a sub-Gaussian random vector if the sub-Gaussian norm $\norm{\epsilon}_{\psi_2}$ is finite, where $\norm{\epsilon}_{\psi_2} := \max_{u \in S^{d-1}} \inf_t\{t > 0: \Ex[\exp(- (u^\T \eta)^2/t^2)] \leq 2\}.$ Suppose the $j$th cluster has $n_j$ independently distributed samples, and for simplicity, we assume $n_1\mu_1+ \cdots +n_k\mu_k=0$. Writing the model into a matrix form, we have
\$ 
	X_{N \times d} = M_{N \times d} + H_{N \times d},		
\$
where $M_{i\cdot}= \mu_{\ell_i}^\T$ and $H_{i\cdot}= \epsilon_i^\T$.  Let $\ell=(\ell_i) \in \mathbb{R}^N$ be  the vector of cluster labels.   Let $\nmin = \min_{1\leq j\leq k} n_j$, {$\sigmax = \max_i \norm{\eta_i}_{\psi_2}$}, $\mu_{\max} = \max_{1\leq j\leq k} \norm{\mu_j}_2,$ $\mu_{\text{diff}} = \min_{j\ne m} \norm{\mu_j-\mu_m}_2$,
so that $\mu_{\text{diff}}$ measures the between-class separation. We then define
\[
\snr = \frac{\mudiff^2}{\sigmax^2},\qquad\gamma=\frac{d}{N},
\qquad \zeta = \frac{N}{n_{\min}}, \qquad \xi = \frac{\mumax}{\mudiff}\, , 
\]
{where $\snr$ measures the signal-to-noise ratio}, $\gamma$ measures the ratio between the dimension and the sample size, $\zeta$ measures the {unbalance} of cluster  sizes, and $\xi$ measures the geometric elongation of cluster centers. 

\subsection{The CMDS Embedding}
\label{sec:CMDSembedding}

Applying the double centering procedure as described in the introduction, we obtain the matrix $B$ {from the pairwise distances of possibly unobserved $X$}. For a prespecified embedding dimensionality $r$, the CMDS embedding {of $X$}  is then given by the matrix $Y = \widetilde{V}_r \widetilde{\Lambda}_r^{1/2}$, where the diagonal matrix $\widetilde{\Lambda}_r$ contains the top $r$ eigenvalues of $B$ and the columns of $\widetilde{V}_r$ are the corresponding $r$ eigenvectors.  {To quantify the  embedding quality of the CMDS embedded samples, we} can view  $B=(JX)(JX)^\T$ as a perturbation of {the ideal} $MM^{\T}$. {We emphasize that although the probabilistic  model considered is simple, the error matrix $MM^\T-(JX)(JX)^\T$ is more complex, as it lacks independence in its entries, rows, and columns.}
{Let $V_r\in \mathbb{R}^{N \times r}$ denote the matrix containing the top $r$ eigenvectors of $MM^\T$ and $\Lambda_r = \in \mathbb{R}^{r \times r}$ the diagonal matrix with the corresponding eigenvalues in descending order.}  Write $\rho = \lambda_1/\lambda_r$  as  the ratio between the largest and smallest eigenvalues of the rank $r$ embedding.

\subsection{Exact Cluster Recovery}\label{sec:ExactRecovery}

{To investigate when exact cluster recovery is possible from the noisy CMDS embedding, i.e. when we can}  recover the unobserved cluster vector $\ell$ up to a permutation,  we follow \cite{abbe2018community} and define the  permutation-invariant agreement function as
\begin{align}\label{eq:agreement}
A(u, v) = \max_{\pi \in \mathcal{O}_k} \frac{1}{N} \sum_{i = 1}^N I \big\{u_i = \pi(v_i) \big\},
\end{align}
where $u, v\in [k]^N$ are two vectors of cluster labels,  and $\mathcal{O}_k$ is the set of permutation operators on $[k]$.
We also define the maximal within cluster and minimal between cluster distances in the embedding as
\begin{align*}
d_{\text{in}}(\widetilde{V}_r\widetilde{\Lambda}_r^{1/2}, \ell) &= \max_{i,j, \ell_i=\ell_j} \big\|{(\widetilde{V}_r\widetilde{\Lambda}^{1/2}_r)_{i\cdot} - (\widetilde{V}_r\widetilde{\Lambda}^{1/2}_r)_{j\cdot}}\big\|_2\, , \\
d_{\text{btw}}(\widetilde{V}_r\widetilde{\Lambda}_r^{1/2}, \ell) &= \min_{i,j, \ell_i\ne\ell_j} \big\|(\widetilde{V}_r\widetilde{\Lambda}_r^{1/2})_{i\cdot}-(\widetilde{V}_r\widetilde{\Lambda}^{1/2}_r)_{j\cdot}\big\|_2\, .
\end{align*}
Following \cite{vu2018simple}, we say that the embedding $\widetilde{V}_r\widetilde{\Lambda}_r^{1/2}$ is a {perfect geometric representation} of the labels $\ell$ if $d_{\text{btw}}(\widetilde{V}_r\widetilde{\Lambda}_r^{1/2}, \ell) > 2 d_{\text{in}}(\widetilde{V}_r\widetilde{\Lambda}_r^{1/2}, \ell)$.  
When the number of clusters, $k$, is given, the perfect geometric representation condition is sufficient to guarantee the exact recovery of cluster labels by several common clustering algorithms such as {hierarchical clustering} and $k$-means. {Namely, those methods can} produce an estimated cluster label vector $\hat{\ell} \in [k]^N$ with $A(\hat{\ell}, \ell) = 1$.  
We verify this for $k$-means and {various hierarchical clustering algorithms including} single linkage, {complete linkage, average linkage, and minimum energy}  in Appendix \ref{app:clustering}. 
Since multiple algorithms can correctly recover the labels from a perfect geometric representation, in the remainder of the paper we say that the CMDS embedding  {exactly recovers} the labels when it is a perfect geometric representation of the true labels. 

We thus investigate the sampling regime when this embedding produces a perfect geometric representation with high probability. {Since the CMDS embedding of $M$ simply consists of $k$ distinct points, this occurs when the perturbation of the CMDS embedding of $X$ is small or when the clusters are well separated.} We will derive conditions on the {SNR} {which ensure the latter and thus guarantee the CMDS and clustering procedure achieves} exact recovery with high probability. To elucidate our results, we consider the following three regimes characterized by the relationship between $N$ and $d$: (i). The low dimensional regime assumes $d = O(1)$;   (ii). The moderate dimensional regime assumes $\Omega(1) \leq d \leq O\left([N \log N]^2\right)$; (iii). The high dimensional regime assumes $\Omega\left([N\log N]^2\right) \leq d$.

\section{Main Results}\label{sec:MainResults}

{Treating the multidimensional scaling matrix $B=(JX)(JX)^\T$ as a perturbation of $MM^\T$ as discussed in Section \ref{sec:CMDSembedding}, we establish several theoretical results.} First, the perturbation of the eigenspace is bounded (Theorem \ref{thm:eigenvec_pert}). Second, the perturbation of the CMDS embedding matrix is bounded (Theorem \ref{thm:MDS_embedding}).  Third, a sufficient condition for exactly recovery  is derived (Theorem \ref{thm:MainResult}). Finally, Theorems \ref{thm:MainTheorem_HighD} and \ref{thm:MainTheorem_HighD_ModifiedEigs} extend Theorem \ref{thm:MainResult} to provide a sharp characterization of {exact recovery} in the high-dimensional regime under certain assumptions. To begin with, we need the following  regularity conditions.
\begin{condition}
	\label{cond:balance}
	There exist constants $\tau_1$ and $\tau_2$ such that  
	\[ 
	0<\tau_1< \min(k,\rho,\zeta,\xi) < \max(k,\rho,\zeta,\xi) < \tau_2 < \infty. 
	\] 
\end{condition}
\begin{condition}
	\label{cond:eigenvalue}
	{The embedding dimension satisfies $ r \leq s$, where $s=\textnormal{rank}(MM^\T)$. Furthermore, if $r<s$, then}
	\[ \lambda_{r+1} \leq  \left\{ \frac{\lambda_r}{2\zeta(s-r)} \vee \frac{\mudiff^2 \nmin}{144(s-r)}\right\} .\]
\end{condition}   

For Condition \ref{cond:balance},  $\rho=O(1)$ requires the top $r$ eigenvalues of $MM^{\T}$ to be of the same order, and $\zeta=O(1)$ requires approximately balanced clusters. $\xi = O(1)$ is not necessary for the analysis but allows for concise statements of main results. Condition \ref{cond:eigenvalue} ensures $\lambda_{r+1}$ is sufficiently small relative to $\lambda_r$ and $\mudiff$, which guarantees $r$ dimensions are sufficient  {for data reduction}. 


{Our first theoretical result} bounds the perturbation of the eigenspace associated with the top $r$ eigenvalues of $B$ under {the} max norm. Recall that $\gamma=d/N.$
\begin{theorem}
	\label{thm:eigenvec_pert}
	Under Conditions \ref{cond:balance} and \ref{cond:eigenvalue}, with probability at least $1-O(N^{-1})$ there exists a rotation matrix $R$ such that
	\begin{align*}
	\norm{\widetilde{V}_rR-V_r}_{\max} &\lesssim \frac{\sigmax(\log N)^{1/2}}{\mumax N^{1/2}} +\frac{\sigmax^2}{\mumax^2}\left(\gamma^{1/2}\log N + \frac{\gamma}{N^{1/2}}\right).
	\end{align*}
\end{theorem}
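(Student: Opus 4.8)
The plan is to treat $B=(JX)(JX)^\T$ as a perturbation of $MM^\T$ and run an entrywise (leave-one-out) eigenvector analysis in the spirit of the $\ell_\infty$ perturbation theory for low-rank matrices. Since $n_1\mu_1+\cdots+n_k\mu_k=0$ gives $JM=M$, expanding $B$ yields $B-MM^\T=\mathcal{E}$ with $\mathcal{E}=MH^\T J+JHM^\T+JHH^\T J$: the first two pieces are linear in the noise $H$ and mean zero, while the quadratic piece $JHH^\T J$ has nonzero mean $J\,\Ex[HH^\T]\,J$. First I would record that, because the columns of $V_r$ lie in the range of $M=JM\subseteq\mathrm{range}(J)$, the constant part of this mean acts as a scalar shift on that subspace and does not move the top-$r$ eigenspace of $MM^\T$; only $MH^\T J+JHM^\T$ and the fluctuation $J(HH^\T-\Ex HH^\T)J$, together with the (genuinely varying part of the) diagonal mean, perturb the eigenvectors. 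Standard sub-Gaussian matrix concentration then supplies $\norm{H}_2\lesssim\sigmax(\sqrt N+\sqrt d)$, $\norm{M}_2\asymp\sqrt N\mumax$, and $\norm{HH^\T-\Ex HH^\T}_2\lesssim\sigmax^2(\sqrt{Nd}+N)$, which I combine to bound $\norm{\mathcal{E}}_2$, with all union bounds calibrated to give failure probability $O(N^{-1})$.

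With the operator-norm bound in hand, the next step is a Davis--Kahan/Wedin $\sin\Theta$ argument. Under Conditions \ref{cond:balance} and \ref{cond:eigenvalue} the relevant gap of $MM^\T$ at position $r$ is $\lambda_r-\lambda_{r+1}\gtrsim\lambda_r$, and since $\rho=\lambda_1/\lambda_r=O(1)$ with approximately balanced clusters one has $\lambda_r\asymp\nmin\mumax^2\asymp N\mumax^2$. Taking $R$ to be the orthogonal polar factor of $\widetilde{V}_r^\T V_r$ (equivalently, the solution of the Procrustes problem $\min_{O}\norm{\widetilde{V}_rO-V_r}_F$), this yields the averaged bound $\norm{\widetilde{V}_rR-V_r}_F\lesssim\norm{\mathcal{E}}_2/\lambda_r$. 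This controls the perturbation on average but is far too weak entrywise, so it serves only as an input to the refined analysis.

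The heart of the proof is the entrywise bound. Starting from $\widetilde{V}_r\widetilde{\Lambda}_r=B\widetilde{V}_r=(MM^\T+\mathcal{E})\widetilde{V}_r$ and the analogous identity for $V_r$, I derive a row-wise representation $(\widetilde{V}_rR-V_r)_{m\cdot}\approx\lambda_r^{-1}(\mathcal{E}V_r)_{m\cdot}+(\text{coupling remainder})$, the remainder involving $\mathcal{E}(\widetilde{V}_rR-V_r)$ and the eigenvalue mismatch. For the leading term I bound $\norm{\mathcal{E}V_r}_{\max}$ row by row, using $JV_r=V_r$. The dominant linear contribution comes from $JHM^\T V_r$, whose $(m,j)$ entry is essentially $\epsilon_m^\T M^\T v_j$; since $\norm{M^\T v_j}_2=\sqrt{\lambda_j}\asymp\sqrt N\mumax$, this entry is sub-Gaussian of scale $\sigmax\sqrt N\mumax$, giving $\norm{JHM^\T V_r}_{\max}\lesssim\sigmax\mumax\sqrt{N}\sqrt{\log N}$; dividing by $\lambda_r\asymp N\mumax^2$ produces exactly the first term $\sigmax\mumax^{-1}N^{-1/2}(\log N)^{1/2}$ (the companion piece $MH^\T V_r$ is of smaller order). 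The quadratic contribution $JHH^\T JV_r$ splits into its incoherence-damped mean part, which after division by $\lambda_r$ and use of $\norm{V_r}_{\max}\asymp N^{-1/2}$ yields $\sigmax^2\mumax^{-2}\gamma N^{-1/2}$, and its fluctuation $J(HH^\T-\Ex HH^\T)JV_r$, controlled entrywise via the Hanson--Wright inequality to give $\sigmax^2\mumax^{-2}\gamma^{1/2}\log N$.

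The main obstacle is the coupling remainder and the dependence it encodes: the $m$-th row of $\mathcal{E}$ depends on $\epsilon_m$, which in turn influences the eigenvectors $\widetilde{V}_r$, so $(\mathcal{E}(\widetilde{V}_rR-V_r))_{m\cdot}$ cannot be bounded by naive conditional concentration. I would resolve this by constructing leave-one-out matrices $B^{(m)}$ in which $\epsilon_m$ is replaced by zero (or an independent copy), with eigenvectors $\widetilde{V}_r^{(m)}$ that are independent of $\epsilon_m$; the dependence is transferred to the difference $\widetilde{V}_r-\widetilde{V}_r^{(m)}$, which I must show is higher order by a second $\sin\Theta$ bound applied to the low-rank change $B-B^{(m)}$. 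A further complication, emphasized in Section \ref{sec:CMDSembedding}, is that $\mathcal{E}$ has neither independent entries nor independent rows or columns, because of the quadratic term $HH^\T$ and the double centering by $J$. Consequently every conditional concentration step must rely on quadratic-form inequalities rather than row-wise independence, and the centering operator $J$ must be carried through carefully so that the harmless scalar shift is separated from the genuinely perturbing fluctuations at each stage.
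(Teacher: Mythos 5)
Your proposal is correct in its leading-order calculus but takes a genuinely different route from the paper. The paper's proof has three light ingredients: (i) a delocalization lemma (Lemma \ref{lem:eigvec_delocaliztion}) showing the eigenvectors of $MM^\T$ are block constant, so that $\norm{V_r}_{\max}\leq \nmin^{-1/2}$ and the coherence is $\kappa\leq\zeta=O(1)$; (ii) a black-box application of the $\ell_\infty$ eigenvector perturbation theorem of Fan--Wang--Zhong (Theorem \ref{thm:LowRankInfinityEigPert}), which converts a bound on the row-sum norm $\norm{\error}_{\infty}$ directly into $\norm{\widetilde{V}_rR-V_r}_{\max}\lesssim \norm{\error}_{\infty}/(\lambda_r N^{1/2})$; and (iii) the random-matrix bound $\norm{\error}_{\infty}/N\lesssim \sigmax\mumax(\log N)^{1/2}+\sigmax^2(\sqrt{d}\log N+\gamma)$ (Lemma \ref{lem:RMTInfinityControl}), combined with $\lambda_r\asymp N\mumax^2$. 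You instead re-derive entrywise control from first principles: a first-order expansion $(\widetilde{V}_rR-V_r)_{m\cdot}\approx\lambda_r^{-1}(\error V_r)_{m\cdot}$ plus a leave-one-out construction for the coupling remainder. Your term-by-term estimates are right and reproduce every piece of the stated bound (the $\sqrt{\lambda_j}$ scale of $\epsilon_m^\T M^\T v_j$, the lower-order $MH^\T V_r$ piece, the split of $JHH^\T J V_r$ into varying diagonal mean and fluctuation), and your observation that a common scalar part of $\Ex[HH^\T]$ only shifts eigenvalues on $\mathrm{range}(J)$ is exactly the device the paper deploys later for Theorem \ref{thm:MainTheorem_HighD}. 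What the paper's route buys is that its only stochastic input is $\norm{\error}_{\infty}$, a norm completely insensitive to the dependence among entries of $\error$ --- precisely the difficulty your plan must fight at every conditional-concentration step; what your route would buy is a self-contained argument, not resting on the cited theorem, with a more transparent accounting of which noise terms drive each rate.

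The genuine risk is that the hardest step of your plan --- showing the leave-one-out difference between $\widetilde{V}_r$ and $\widetilde{V}_r^{(m)}$ is higher order --- is asserted but not executed, and it is not routine here. The change $B-B^{(m)}$ is indeed low rank, but its spectral norm is of order $\sigmax\mumax\sqrt{N\log N}+\sigmax^2\bigl(\sqrt{Nd}+d\bigr)$, so the ``second $\sin\Theta$ bound'' you invoke requires $\sigmax^2 d\lesssim\lambda_r\asymp N\mumax^2$, i.e.\ an SNR-type restriction (essentially $\snr\gtrsim\gamma$, using $\xi=O(1)$) that is not among the hypotheses of Theorem \ref{thm:eigenvec_pert}; and in the regime $d\gtrsim N(\log N)^2$ the theorem's bound can be nonvacuous even when that restriction fails, so you cannot dismiss this regime as trivial. (In fairness, the paper's route carries a comparable implicit requirement, since Theorem \ref{thm:LowRankInfinityEigPert} has the gap hypothesis $\lambda_r-\epsilon=\Omega(r^3\kappa^2\norm{\error}_{\infty})$, which the paper does not verify either.) If you carry out your plan, you should either add such a condition explicitly or handle that regime separately; as written, the remainder control is the missing piece, and it is exactly the piece the paper outsources to the cited $\ell_\infty$ perturbation theorem.
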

\begin{proof}
The proof  is collected  in Appendix \ref{app:Thm1and2}.
\end{proof}

To further investigate the embedding quality,  we desire a comparison not just of $V_r$ with $\widetilde{V}_r$ but of $V_r\Lambda^{1/2}_r$ with $\widetilde{V}_r\widetilde{\Lambda}^{1/2}_r$, since the rank $r$ CMDS embedding is obtained by rescaling the eigenvectors by the root of the associated eigenvalues.   {Because relative pairwise distances} remain the same under a rotation of the data, it suffices to compare {$V_r\Lambda^{1/2}_r$} with $\widetilde{V}_r\widetilde{\Lambda}^{1/2}_rR$, where 
$R$ is any $r\times r$ rotation matrix. The following theorem summarizes this result. 
\begin{theorem}
	\label{thm:MDS_embedding}
	Assume $\snr \gtrsim 1+\gamma$ and Conditions \ref{cond:balance} and \ref{cond:eigenvalue}. Then  with probability at least $1-O(N^{-1})$ there exists a rotation matrix $R$ such that
	\begin{align*}
	&\norm{V_r\Lambda^{1/2}_r - \widetilde{V}_r\widetilde{\Lambda}^{1/2}_rR}_{\max} \\
	&\qquad\lesssim \left[\sigmax\mumax(1+\sqrt{\gamma})\right]^{1/2}+\sigmax(\sqrt{\log N}+\sqrt{\gamma})+ \frac{\sigmax^2}{\mumax}(\sqrt{d}\log N + \gamma).
	\end{align*}
\end{theorem}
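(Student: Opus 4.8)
The plan is to split the target difference into an eigenvector-perturbation piece and an eigenvalue piece, and treat each with a different tool before recombining. Writing $E = B - MM^\T = MH^\T J + JHM^\T + JHH^\T J$ for the symmetric error matrix (using $JM=M$), and letting $R$ be the rotation supplied by Theorem \ref{thm:eigenvec_pert}, I would decompose
\[
V_r\Lambda_r^{1/2} - \widetilde{V}_r\widetilde{\Lambda}_r^{1/2}R = (V_r - \widetilde{V}_r R)\Lambda_r^{1/2} + \widetilde{V}_r\big(R\Lambda_r^{1/2} - \widetilde{\Lambda}_r^{1/2}R\big),
\]
so that the first summand rescales the already-controlled eigenvector error by the eigenvalues and the second isolates the eigenvalue discrepancy modulated by the rotation.

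For the eigenvector term, since right multiplication by $\Lambda_r^{1/2}$ scales column $j$ by $\lambda_j^{1/2}$, the max norm obeys $\norm{(V_r - \widetilde{V}_r R)\Lambda_r^{1/2}}_{\max} \le \lambda_1^{1/2}\norm{V_r - \widetilde{V}_r R}_{\max}$. I would bound $\lambda_1^{1/2} = \norm{M}_2 \le N^{1/2}\mumax$ and insert Theorem \ref{thm:eigenvec_pert}; using $N^{1/2}\gamma^{1/2} = \sqrt d$, this produces $\sigmax(\log N)^{1/2} + (\sigmax^2/\mumax)(\sqrt d\,\log N + \gamma)$, which is exactly the third term and part of the second term of the claim.

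The eigenvalue term is the crux. Writing $M_\star = R\Lambda_r^{1/2} - \widetilde{\Lambda}_r^{1/2}R$, a row-wise Cauchy--Schwarz gives $\norm{\widetilde{V}_r M_\star}_{\max}\le \max_i\norm{(\widetilde{V}_r)_{i\cdot}}_2\,\norm{M_\star}_2$. The delocalization factor $\max_i\norm{(\widetilde{V}_r)_{i\cdot}}_2\lesssim N^{-1/2}$ follows from the incoherence of $V_r$ (its rows are constant within clusters, with squared norm $\asymp 1/n_{\ell_i}\lesssim\zeta/N$) combined with the max-norm control of $\widetilde{V}_r R - V_r$ from Theorem \ref{thm:eigenvec_pert}. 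For the commutator, I would use that $R$ is orthogonal to write $\norm{M_\star}_2 = \norm{\Lambda_r^{1/2} - (R^\T\widetilde{\Lambda}_r R)^{1/2}}_2$ and then invoke the operator H\"older-$\tfrac12$ inequality $\norm{P^{1/2}-Q^{1/2}}_2\le\norm{P-Q}_2^{1/2}$ for positive semidefinite $P,Q$, reducing matters to the rotation-compatible quadratic-form perturbation $\norm{\Lambda_r - R^\T\widetilde{\Lambda}_r R}_2$. I would bound this last quantity by $\norm{E}_2$ after splitting it as $V_r^\T E V_r + [V_r^\T B V_r - (\widetilde{V}_r R)^\T B(\widetilde{V}_r R)]$, controlling the first piece by $\norm{E}_2$ and the second by $\norm{B}_2\norm{V_r - \widetilde{V}_r R}_2\lesssim N\mumax^2\cdot\norm{E}_2/(N\mudiff^2)\lesssim\norm{E}_2$ via the Davis--Kahan bound with eigengap $\lambda_r - \lambda_{r+1}\gtrsim N\mudiff^2$ guaranteed by Conditions \ref{cond:balance} and \ref{cond:eigenvalue}. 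A standard sub-Gaussian operator-norm bound gives $\norm{H}_2\lesssim\sigmax(\sqrt N + \sqrt d)$ with probability $1-O(N^{-1})$, whence $\norm{E}_2\le 2\norm{M}_2\norm{H}_2 + \norm{H}_2^2\lesssim \sigmax\mumax N(1+\sqrt\gamma)+\sigmax^2 N(1+\gamma)$; taking the square root and multiplying by $N^{-1/2}$ yields $[\sigmax\mumax(1+\sqrt\gamma)]^{1/2}+\sigmax(1+\sqrt\gamma)$, which is absorbed into the first and second terms of the claim since $(1+\gamma)^{1/2}\le 1+\sqrt\gamma\le(\log N)^{1/2}+\sqrt\gamma$.

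I expect the main obstacle to be the commutator $R\Lambda_r^{1/2}-\widetilde{\Lambda}_r^{1/2}R$: because $\rho=O(1)$ the top $r$ eigenvalues are comparable but not equal, so $R$ genuinely mixes the leading eigenvectors and does not commute with $\Lambda_r^{1/2}$, making any naive entrywise bound on this commutator too weak. The device that rescues the argument is to pass through the \emph{squared} eigenvalues, where the orthogonal conjugation $R^\T\widetilde{\Lambda}_r R$ is harmless, and only then take the matrix square root via the H\"older-$\tfrac12$ estimate; this is precisely what converts the $O(\norm{E}_2)$ eigenvalue perturbation into the $O(\norm{E}_2^{1/2})$ contribution responsible for the $[\sigmax\mumax(1+\sqrt\gamma)]^{1/2}$ term. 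A final union bound over the event of Theorem \ref{thm:eigenvec_pert} and the operator-norm bound on $H$ keeps the total failure probability at $O(N^{-1})$, and the hypothesis $\snr\gtrsim 1+\gamma$ enters exactly to force $\norm{E}_2$ below the eigengap so that Davis--Kahan and the eigenvalue-stability estimates remain valid.
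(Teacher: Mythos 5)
Your overall architecture --- splitting $V_r\Lambda_r^{1/2}-\widetilde V_r\widetilde\Lambda_r^{1/2}R$ into an eigenvector term scaled by $\lambda_1^{1/2}$ and an eigenvalue/commutator term scaled by an incoherence factor, then feeding in spectral and infinity-norm bounds on $E$ --- is exactly the paper's (Lemma \ref{lem:embedding_pert_general} combined with Lemmas \ref{lem:RMTSpectralControl} and \ref{lem:RMTInfinityControl}), and your handling of the eigenvector term and of $\norm{E}_2,\norm{E}_\infty$ is correct. Where you genuinely diverge is the commutator $R\Lambda_r^{1/2}-\widetilde\Lambda_r^{1/2}R$: the paper partitions the eigenvalues into $\delta$-groups and combines Weyl's inequality (within groups) with Bhatia's theorem (Theorem \ref{thm:EigspacePertBhatia}, to kill $|R_{ij}|$ across groups), whereas you pass to squared eigenvalues, invoke $\norm{P^{1/2}-Q^{1/2}}_2\le\norm{P-Q}_2^{1/2}$, and finish with Davis--Kahan. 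That route is elegant, but it has a step that fails as written. Your delocalization claim $\max_i\norm{(\widetilde V_r)_{i\cdot}}_2\lesssim N^{-1/2}$ does not follow under the hypotheses of this theorem: you derive it from incoherence of $V_r$ plus the max-norm bound of Theorem \ref{thm:eigenvec_pert}, but under merely $\snr\gtrsim 1+\gamma$ that bound need not be $O(N^{-1/2})$. Take $\gamma\asymp 1$ and $\snr\asymp 1$, so that $\sigmax\asymp\mumax$; then Theorem \ref{thm:eigenvec_pert} only gives $\norm{\widetilde V_rR-V_r}_{\max}\lesssim(\sigmax^2/\mumax^2)\gamma^{1/2}\log N\asymp\log N$, which is enormous compared with $N^{-1/2}$. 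Losing that factor turns your commutator contribution into $\norm{E}_2^{1/2}$ instead of $\norm{E}_2^{1/2}/N^{1/2}$, i.e.\ off by $\sqrt N$. This is precisely why the paper pairs the \emph{deterministic} $V_r$ (whose rows are $O(N^{-1/2})$ by Lemma \ref{lem:eigvec_delocaliztion}) with the eigenvalue discrepancy, not $\widetilde V_r$. The fix within your scheme is to write $\widetilde V_rM_\star=V_rR^\T M_\star+(\widetilde V_rR-V_r)R^\T M_\star$: the first piece enjoys the true $N^{-1/2}$ incoherence, and the second is at most $\sqrt r\,\norm{\widetilde V_rR-V_r}_{\max}\norm{M_\star}_2\lesssim\lambda_1^{1/2}\norm{\widetilde V_rR-V_r}_{\max}$ (using $\norm{M_\star}_2\le\lambda_1^{1/2}+\widetilde\lambda_1^{1/2}\lesssim\lambda_1^{1/2}$ once $\norm{E}_2\lesssim\lambda_r$), which is the same order as your eigenvector term and is absorbed there.

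A second, subtler gap: you apply Davis--Kahan to $\norm{V_r-\widetilde V_rR}_2$ for the rotation $R$ \emph{supplied by Theorem \ref{thm:eigenvec_pert}}, but Theorem \ref{thm:DavisKahan} only asserts existence of \emph{some} orthogonal matrix, and nothing in the two statements forces the rotations to coincide. Your bound $\norm{V_r^\T BV_r-(\widetilde V_rR)^\T B(\widetilde V_rR)}_2\lesssim\norm{B}_2\norm{V_r-\widetilde V_rR}_2\lesssim\norm{E}_2$ therefore needs justification for this specific $R$; the cheap substitute $\norm{V_r-\widetilde V_rR}_2\le\sqrt{Nr}\,\norm{V_r-\widetilde V_rR}_{\max}$ is too lossy to recover the stated rate in the high-$\snr$ regime. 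The repair is to observe that both Theorem \ref{thm:eigenvec_pert} and Theorem \ref{thm:DavisKahan} can be taken with the Procrustes rotation $\mathrm{sgn}(\widetilde V_r^\T V_r)$ (this is how both results are actually proved), so a single rotation serves in both roles --- but this must be stated, since the paper's own $\delta$-group argument is deliberately built so that it works for \emph{whatever} rotation Theorem \ref{thm:eigenvec_pert} provides, using only Weyl and Bhatia, which are rotation-independent. With these two repairs your proof goes through and yields the same bound; the operator-monotonicity device is a legitimate alternative to the paper's $\delta$-group analysis for converting an $O(\norm{E}_2)$ eigenvalue perturbation into an $O(\norm{E}_2^{1/2})$ embedding error.
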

\begin{proof}
The proof  is collected  in Appendix \ref{app:Thm1and2}.
\end{proof}

We emphasize that in practice one does not need to know $R$, since a rotation does not affect any important properties of the embedding.  See for example Theorem \ref{thm:MainResult}, which applies directly to $\widetilde{V}_r\widetilde{\Lambda}^{1/2}_r$, not $\widetilde{V}_r\widetilde{\Lambda}^{1/2}_rR$. Theorem \ref{thm:MDS_embedding} simply demonstrates that the empirical CMDS embedding $\widetilde{V}_r\widetilde{\Lambda}^{1/2}_r$ can be aligned with the perfect model $V_r\Lambda^{1/2}_r$ in such a way that the {elementwise} perturbation is small.

Having bounded the perturbation of the CMDS embedding, we can determine in what regime the exact recovery is possible. A straightforward calculation shows that  the perturbed CMDS embedding $\widetilde{V}_r\widetilde{\Lambda}^{1/2}_r$ is a perfect geometric representation of the community labels when 
\[ 
\mudiff > 12 \sqrt{r}\norm{V_r\Lambda^{1/2}_r - \widetilde{V}_r\widetilde{\Lambda}^{1/2}_rR}_{\max},
\]
as shown in Lemma \ref{lem:RelateER_EmbeddingPert} of Appendix \ref{app:RelateEmbeddingAndRecovery}. Thus, when this condition holds, {the empirical CMDS embedding is a perfect geometric representation of the labels.} The following theorem is obtained by combining this condition with the bound in Theorem \ref{thm:MDS_embedding}, and gives a sufficient condition on the {SNR} to ensure the CMDS and clustering procedure exactly recovers the labels with high probability. 

\begin{theorem}
	\label{thm:MainResult}
	Assume Conditions \ref{cond:balance} and \ref{cond:eigenvalue}. Then with probability at least $1-O(N^{-1})$  the {empirical} rank $r$ CMDS {embedding} $\widetilde{V}_r\widetilde{\Lambda}^{1/2}_r$  is a perfect geometric representation of the labels whenever
	\begin{align} 
	\label{equ:MainCondition}
	\snr &\gtrsim d^{1/2} \log N + \gamma. 
	\end{align}
\end{theorem}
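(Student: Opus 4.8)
The plan is to reduce the theorem to the two results already in hand: the deterministic sufficient condition of Lemma~\ref{lem:RelateER_EmbeddingPert} and the high-probability entrywise bound of Theorem~\ref{thm:MDS_embedding}. By Lemma~\ref{lem:RelateER_EmbeddingPert}, the empirical embedding is a perfect geometric representation as soon as $12\sqrt{r}\,\norm{V_r\Lambda^{1/2}_r - \widetilde{V}_r\widetilde{\Lambda}^{1/2}_rR}_{\max} < \mudiff$, so the whole proof amounts to showing this inequality holds under \eqref{equ:MainCondition}. The first preliminary step is to dispose of the factor $\sqrt{r}$: since $r \le s = \rank(MM^\T) \le k$ and Condition~\ref{cond:balance} forces $k < \tau_2$, we have $r = O(1)$, and $\sqrt{r}$ can be absorbed into the $\lesssim$ notation. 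It therefore suffices to prove $\norm{V_r\Lambda^{1/2}_r - \widetilde{V}_r\widetilde{\Lambda}^{1/2}_rR}_{\max} \lesssim \mudiff$.

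Before invoking Theorem~\ref{thm:MDS_embedding} I would check its precondition $\snr \gtrsim 1+\gamma$. This is implied by the hypothesis $\snr \gtrsim d^{1/2}\log N + \gamma$, because $d^{1/2}\log N = \Omega(1)$ gives $d^{1/2}\log N + \gamma \gtrsim 1+\gamma$. Hence Theorem~\ref{thm:MDS_embedding} applies and, on an event of probability at least $1-O(N^{-1})$, furnishes its three-term bound on the max-norm perturbation. I would also record that Condition~\ref{cond:balance} yields $\xi = \mumax/\mudiff = \Theta(1)$, so that $\mumax \asymp \mudiff$; this identification is what lets every $\mumax$-dependent term be rewritten purely in terms of $\snr$.

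The core of the argument is then a term-by-term comparison of the bound in Theorem~\ref{thm:MDS_embedding} against $\mudiff$. Using $\mumax \asymp \mudiff$ and squaring where convenient, the first term satisfies $[\sigmax\mumax(1+\sqrt\gamma)]^{1/2} \lesssim \mudiff$ provided $(1+\sqrt\gamma)^2 \lesssim \snr$, which holds since $(1+\sqrt\gamma)^2 \lesssim 1+\gamma \lesssim \snr$; the second term satisfies $\sigmax(\sqrt{\log N}+\sqrt\gamma) \lesssim \mudiff$ provided $\log N + \gamma \lesssim \snr$, again implied by \eqref{equ:MainCondition} because $d^{1/2}\log N \ge \log N$; and the third term satisfies $\tfrac{\sigmax^2}{\mumax}(\sqrt{d}\log N + \gamma) \lesssim \mudiff$ provided $\sqrt{d}\log N + \gamma \lesssim \snr$, which is exactly the assumed scaling \eqref{equ:MainCondition}. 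Summing the three controlled contributions gives $\norm{V_r\Lambda^{1/2}_r - \widetilde{V}_r\widetilde{\Lambda}^{1/2}_rR}_{\max} \lesssim \mudiff$ on the same high-probability event, which completes the reduction.

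I do not expect a single hard estimate here, since all the analytic work is hidden in Theorems~\ref{thm:eigenvec_pert} and~\ref{thm:MDS_embedding}. The only delicate point is the bookkeeping: recognizing that the binding constraint is the third term, whose threshold $\sqrt{d}\log N + \gamma$ is precisely the SNR condition, while the first two terms are strictly weaker. Getting the substitution $\mumax \asymp \mudiff$ and the elementary bound $(1+\sqrt\gamma)^2 \lesssim 1+\gamma$ right is what makes each term collapse to a clean SNR statement and pins down the sharp exponent $d^{1/2}$ in \eqref{equ:MainCondition}.
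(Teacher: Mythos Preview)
Your proposal is correct and follows essentially the same route as the paper: combine Lemma~\ref{lem:RelateER_EmbeddingPert} with the entrywise embedding-perturbation bound, then verify term by term that the three contributions are each $\lesssim \mudiff$ under \eqref{equ:MainCondition}, with the third term supplying the binding constraint. The only cosmetic difference is that you invoke the packaged Theorem~\ref{thm:MDS_embedding}, whereas the paper's proof re-derives the same three-term bound directly from Lemma~\ref{lem:embedding_pert_general} together with the random-matrix Lemmas~\ref{lem:RMTSpectralControl} and~\ref{lem:RMTInfinityControl}; the resulting three $\snr$ inequalities and the observation that $\xi=O(1)$ makes the last one dominate are identical.
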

\begin{proof}
The proof  is collected  in Appendix \ref{app:MainResult}.
\end{proof}
Intuitively, $\mudiff^2$ and $\sigmax^2$ quantify the strengths of the signal and the noise respectively. Exact recovery is easier when the signal-to-noise ratio is larger, namely when the classes are well separated relative to the noise. Theorem \ref{thm:MainResult} quantifies the scaling of the signal-to-noise ratio required for exact recovery.  When Inequality \ref{equ:MainCondition} holds, {the empirical CMDS will be a perfect geometric representation of the labels with high probability.}

In the very high-dimensional setting, an alternate analysis using standard Davis-Kahan theorem {\citep{yu2014useful}}  gives sharper results, when extra conditions, including the convex concentration property, hold.   
\begin{condition}
	\label{cond:CCP}
{The noise vectors $\epsilon_i$ have a common covariance matrix $\Sigma$} and satisfy the following convex concentration property with constant $\sigmax$: for every 1-Lipschitz convex function $\phi$,  $\Ex[\phi(\epsilon_i)]<\infty$ and 
	\begin{align*}
	\Prob\left(|\phi(\epsilon_i)-\Ex[\phi(\epsilon_i)]|>t\right) &\leq 2\exp(-t^2/\sigmax^2) \, .
	\end{align*}
\end{condition}

Convex concentration is a common assumption for random vectors in the statistical literature, as the assumption covers many well known models, including a multivariate Gaussian distribution with general covariance and a uniform distribution on the unit sphere \citep{kasiviswanathan2019restricted}, random vectors with bounded independent entries \citep{talagrand1988isoperimetric, talagrand1995concentration}, and random vectors which obey a logarithmic Sobolev inequality \citep{adamczak2005logarithmic}.  

	
\begin{condition}\label{cond:equal_eigs} 
The top $r$ eigenvalues of $MM^\T$ are of {the} same order, that is $\lambda_1 \sim \lambda_r$. More precisely, $\rho-1 \leq C\mudiff/ \mumax$ for some absolute constant $C<1$.
\end{condition}

\begin{theorem}\label{thm:MainTheorem_HighD}
Assume Conditions \ref{cond:balance}--\ref{cond:equal_eigs}. Then with probability at least $1-O(N^{-1})$  the {empirical} rank $r$ CMDS embedding $\widetilde{V}_r\widetilde{\Lambda}^{1/2}_r$ is a perfect geometric representation of the labels whenever
	{
	\begin{align}
	\label{equ:MainConditionEqualEigs}
	\snr &\gtrsim N + d^{1/2}.
	\end{align}
Further{more} if $d\gtrsim N^2$, then $\snr \gtrsim  d^{1/2}$ suffices for a perfect geometric representation.} 
\end{theorem}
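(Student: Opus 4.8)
The plan is to treat $B=(JX)(JX)^\T$ as a perturbation of its \emph{expectation} rather than of $MM^\T$; this is what sharpens the Davis--Kahan analysis in high dimensions. Since $\sum_j n_j\mu_j=0$ gives $JM=M$, we have $JX=M+JH$ and hence $B=MM^\T+MH^\T J+JHM^\T+JHH^\T J$. Under the common-covariance assumption of Condition \ref{cond:CCP}, taking expectations yields $\bar B:=\Ex[B]=MM^\T+cJ$ with $c=\mathrm{tr}(\Sigma)$. The key structural observation is that $\bar B$ shares its top-$r$ eigenvectors with $MM^\T$: because $\mathrm{range}(MM^\T)$ is orthogonal to $1$ and $J$ acts as the identity on $1^\perp$, every eigenvector $v$ of $MM^\T$ with eigenvalue $\lambda>0$ satisfies $\bar Bv=(\lambda+c)v$. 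Thus the top-$r$ eigenspace of $\bar B$ is still $V_r$, only the eigenvalues are inflated to $\lambda_i+c$, and by Condition \ref{cond:eigenvalue} the eigengap at level $r$ remains $\lambda_r-\lambda_{r+1}\gtrsim\lambda_r$.

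Next I would control the fluctuation $P:=\|B-\bar B\|_2=\|MH^\T J+JHM^\T+J(HH^\T-cI)J\|_2$. For the cross term I would exploit that $M$ has rank $k=O(1)$ (Condition \ref{cond:balance}): writing $HM^\T=HP_S M^\T$ with $P_S$ the projection onto the $O(1)$-dimensional span of the $\mu_j$, the independent sub-Gaussian rows give $\|HP_S\|_2\lesssim\sigmax\sqrt N$, so $\|MH^\T J\|_2\le\|HP_S\|_2\|M\|_2\lesssim\sigmax\sqrt N\cdot\sqrt N\mudiff=\sigmax\mudiff N$, using $\|M\|_2\asymp\sqrt{\lambda_1}\asymp\sqrt N\mudiff$. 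For the noise--noise term I would invoke a Hanson--Wright/matrix-concentration estimate under the convex concentration property to get $\|J(HH^\T-cI)J\|_2\lesssim\sigmax^2\sqrt{Nd}$ with probability $1-O(N^{-1})$. Combining, $P\lesssim\sigmax\mudiff N+\sigmax^2\sqrt{Nd}$.

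I would then pair Davis--Kahan \citep{yu2014useful} with a Procrustes (Gram-factor) bound to compare the embeddings directly. Davis--Kahan gives a rotation with $\|\widetilde V_rR-V_r\|_2\lesssim P/\lambda_r$; expanding the difference of rank-$r$ truncations $B_r-\bar B_r$ through the projection error and using $\|\bar B\|_2=\lambda_1+c$ together with Condition \ref{cond:equal_eigs} ($\lambda_1\asymp\lambda_r$) yields $\|B_r-\bar B_r\|_2\lesssim P(\lambda_r+c)/\lambda_r$. Dividing by $\sigma_r\!\big(V_r(\Lambda_r+cI)^{1/2}\big)=\sqrt{\lambda_r+c}$, a standard Procrustes bound gives, for $\widetilde Y=\widetilde V_r\widetilde\Lambda_r^{1/2}$ and $\bar Y=V_r(\Lambda_r+cI)^{1/2}$, a rotation $R$ with $\|\widetilde Y-\bar Y R\|_2\lesssim P\sqrt{\lambda_r+c}/\lambda_r$. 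Working through the truncations this way routes around the delicate rotating-diagonal term while still using the gap $\gtrsim\lambda_r$ correctly, even though $\bar B$ is full rank.

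Finally I would convert to a perfect geometric representation. The rows of $\bar Y$ are constant within clusters, and a short computation gives between-cluster squared distance $\|\mu_{\ell_i}-\mu_{\ell_j}\|^2+c\|(V_r)_{i\cdot}-(V_r)_{j\cdot}\|^2\gtrsim\mudiff^2+c/N$ under Conditions \ref{cond:balance}--\ref{cond:eigenvalue}. Since the maximal row-norm is dominated by the spectral norm, $\max_i\|(\widetilde Y R-\bar Y)_{i\cdot}\|_2\le\|\widetilde Y-\bar Y R\|_2$, so the empirical embedding is a perfect geometric representation once $P\sqrt{\lambda_r+c}/\lambda_r\lesssim\sqrt{\mudiff^2+c/N}$. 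Substituting $\lambda_r\asymp N\mudiff^2$, $c\asymp d\sigmax^2$, and the bound on $P$, this inequality reduces in both the $\lambda_r\gtrsim c$ and $\lambda_r\lesssim c$ regimes to $\snr\gtrsim N+d^{1/2}$; the ``furthermore'' case is immediate since $d\gtrsim N^2$ forces $N+d^{1/2}\asymp d^{1/2}$. The main obstacle is the noise--noise estimate $\|J(HH^\T-cI)J\|_2\lesssim\sigmax^2\sqrt{Nd}$, since the entries of $HH^\T$ are dependent and independence-based concentration is unavailable, forcing the use of Condition \ref{cond:CCP}; conceptually, the crux is recognizing that the rank-$(N-1)$ bias $cJ$ preserves the signal eigenvectors while inflating eigenvalues, and that its $c/N$ inflation of the between-cluster distance is precisely what yields the sharp $d^{1/2}$ rate when $d\gtrsim N^2$.
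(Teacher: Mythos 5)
Your proposal is correct in substance and reaches the stated rate, but it executes the key step in a genuinely different way from the paper. Both arguments hinge on the same structural fact: $\Ex[B]=MM^\T+\mtr(\Sigma)J$ has the same top-$r$ eigenvectors as $MM^\T$ with eigenvalues shifted by $c=\mtr(\Sigma)$, so Davis--Kahan must be applied to the centered fluctuation $B-\Ex[B]$ rather than to $B-MM^\T$ (this is the paper's Lemma \ref{lem:RMTCenteredSpectralControl} combined with the decomposition $A\to\hat A\to\widetilde A$ inside Lemma \ref{lem:embedding_pert_CommonCovCCP}). The divergence is in how the eigenvalue inflation is absorbed. The paper shrinks the noisy embedding by $\alpha=[\lambda_1/\{\lambda_1+\mtr(\Sigma)\}]^{1/2}$ and compares $\alpha\widetilde V_r\widetilde\Lambda_r^{1/2}R$ with the clean embedding $V_r\Lambda_r^{1/2}$ in max norm, via a $\delta$-group partition of the spectrum and Bhatia's theorem; since one scalar $\alpha$ cannot undo an additive shift of all $r$ eigenvalues unless they are nearly equal, a residual term $\tau_\rho$ appears, and Condition \ref{cond:equal_eigs} in its quantitative form $\rho-1\le C\mudiff/\mumax$ is exactly what controls it (then Lemma \ref{lem:RelateER_EmbeddingPert}, which explicitly permits the scaling $\alpha$, finishes). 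You instead compare $\widetilde V_r\widetilde\Lambda_r^{1/2}$ directly to the embedding $V_r(\Lambda_r+cI)^{1/2}$ of the expected Gram matrix, via Davis--Kahan plus a Gram-factor Procrustes bound; there is no rescaling mismatch at all, and the bias reappears as a $c/N$ inflation of the reference between-cluster squared distances, which exactly cancels the $\sqrt{\lambda_r+c}$ factor in your perturbation bound (your reduction $P\sqrt{\lambda_r+c}/\lambda_r\lesssim\sqrt{\mudiff^2+c/N}\iff P\lesssim\sqrt N\,\mudiff^2$ is correct, and note $c$ drops out, so you never need $c\asymp d\sigmax^2$). A notable payoff of your route: you only use Condition \ref{cond:equal_eigs} as $\lambda_1\asymp\lambda_r$, which already follows from $\rho=O(1)$ in Condition \ref{cond:balance}; carried out carefully, your argument proves the theorem without Condition \ref{cond:equal_eigs}, something the paper achieves only by unbiasing the eigenvalues (Theorem \ref{thm:MainTheorem_HighD_ModifiedEigs}). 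Your cross-term bound via the rank-$k$ projection, $\|MH^\T\|_2\le\|M\|_2\|HP_S\|_2\lesssim\sigmax\mumax N$, is also a clean alternative to the paper's convex-concentration argument for that term.

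Two small repairs are needed. First, the noise--noise bound should read $\|J(HH^\T-cI)J\|_2\lesssim\sigmax^2(\sqrt{Nd}\vee N)$, not $\sigmax^2\sqrt{Nd}$: the union bound over a $9^N$-point net forces the $\vee\,N$ term (compare Lemma \ref{lem:RMTCenteredSpectralControl}). This is harmless for the conclusion, since the extra case only requires $\snr\gtrsim\sqrt N$, which is implied by $\snr\gtrsim N$. Second, your claim that the reference between-cluster squared distance is $\gtrsim\mudiff^2+c/N$ rests on $\|(V_r)_{i\cdot}-(V_r)_{j\cdot}\|_2^2\gtrsim 1/N$ for $\ell_i\ne\ell_j$; this follows from $\sum_{l\le r}\lambda_l\{(V_r)_{il}-(V_r)_{jl}\}^2\gtrsim\mudiff^2$ together with $\lambda_1\lesssim N\mumax^2$ and $\xi=O(1)$, but when $r<s$ you must first invoke Condition \ref{cond:eigenvalue} to control the discarded coordinates, exactly as in the paper's Lemma \ref{lem:RelateER_EmbeddingPert}; this step should be written out rather than asserted.
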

\begin{proof}
The proof is collected in Appendix \ref{app:ProofOfMainResultHighD}. 
\end{proof}

{The rate in Theorem \ref{thm:MainResult} is very different from the rate in Theorem \ref{thm:MainTheorem_HighD}.  Letting $P = (JX)(JX)^\T - MM^{\T}$ denote the CMDS error matrix, the former rate is essentially obtained by bounding the perturbation of the CMDS embedding by $\norm{P}_{\infty}/(\mumax N)$, but it can also be bounded by $\norm{P - \mtr(\Sigma)J}_{2}/(\mumax N^{1/2})$ under the assumptions of Theorem \ref{thm:MainTheorem_HighD}. One loses a factor of $N^{1/2}$ in replacing the infinity norm with the spectral norm. However $\norm{P - \mtr(\Sigma)J}_{2}$ scales much more favorably with respect to the dimension than $\norm{P}_{\infty}$, as the centered error matrix can be bounded in terms of $\gamma^{1/2}$ instead of $\gamma$. Altogether, the lower bound on the $\snr$ in Theorem \ref{thm:MainResult} is essentially multiplied by $N^{1/2} \gamma^{-1/2} = N d^{-1/2}$ to obtain the lower bound in Theorem \ref{thm:MainTheorem_HighD}, and this is advantageous in the high-dimensional regime where $d\gtrsim N^2 $. } 

\begin{rmk}
	The convex concentration property (Condition \ref{cond:CCP}) can be replaced with the assumption that each cluster has a Gaussian distribution, that is, $x_i \sim \mathcal{N}( \mu_{\ell_i}, \Sigma)$. Theorem \ref{thm:MainTheorem_HighD} can then be proved, up to logarithmic factors, using a subexponential Matrix Bernstein inequality. See Theorem 6.2 in \cite{tropp2012user} and results in \cite{vershynin2011spectral}. Such an approach allows for Gaussian distributions with a degenerate $\Sigma$. 
\end{rmk}


\begin{rmk}
	{Because the empirical eigenvalues $\widetilde{\lambda}_i$ are inflated by noise accumulations, in the proof of Theorem \ref{thm:MainTheorem_HighD} the noisy CMDS embedding is rescaled before comparing with the ideal embedding. More specifically $\norm{V_r\Lambda^{1/2}_r - \alpha\widetilde{V}_r\widetilde{\Lambda}^{1/2}_rR}_{\max}$ is shown to be smaller than  $\norm{V_r\Lambda^{1/2}_r - \widetilde{V}_r\widetilde{\Lambda}^{1/2}_rR}_{\max}$  for a fixed $\alpha<1$. This rescaling has no effect on the probability of exact recovery.} 
\end{rmk}



By restricting to various regimes of interest, three corollaries follow from Theorems \ref{thm:MainResult} {and \ref{thm:MainTheorem_HighD}}. 
\begin{corollary}[Low Dimensional Regime]
	\label{cor:smalld}
	Under the assumptions of Theorem \ref{thm:MainResult}, if $d = O(1)$, then with probability at least $1-O(N^{-1})$ the {empirical} rank $r$ {CMDS} embedding is a perfect geometric representation of the labels whenever
	\[ \snr \gtrsim \log N. \]	
\end{corollary}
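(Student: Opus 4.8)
The plan is to reduce the corollary directly to Theorem \ref{thm:MainResult} by simplifying the right-hand side of its sufficient condition under the low-dimensional assumption. First I would note that the hypotheses of Theorem \ref{thm:MainResult}, namely Conditions \ref{cond:balance} and \ref{cond:eigenvalue}, are assumed here as well, so that theorem applies verbatim: the empirical rank $r$ CMDS embedding $\widetilde{V}_r\widetilde{\Lambda}_r^{1/2}$ is a perfect geometric representation of the labels with probability at least $1-O(N^{-1})$ whenever $\snr \gtrsim d^{1/2}\log N + \gamma$.

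The core of the argument is then to show that, when $d = O(1)$, the bound $d^{1/2}\log N + \gamma$ is itself $O(\log N)$. Since $d = O(1)$ there is an absolute constant $c$ with $d \le c$, whence $d^{1/2} \le c^{1/2}$ and $\gamma = d/N \le c/N$. For $N$ large enough that $\log N \ge 1$, I would bound
\[
d^{1/2}\log N + \gamma \;\le\; c^{1/2}\log N + \frac{c}{N} \;\le\; (c^{1/2}+c)\log N,
\]
so that $d^{1/2}\log N + \gamma = O(\log N)$; in fact $\gamma \to 0$ as $N\to\infty$ and contributes only a lower-order term.

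Consequently, the hypothesis $\snr \gtrsim \log N$ of the corollary, with a suitably chosen implicit constant, implies $\snr \gtrsim d^{1/2}\log N + \gamma$, which is exactly the condition required by Theorem \ref{thm:MainResult}. Invoking that theorem then yields the claimed perfect geometric representation with probability at least $1-O(N^{-1})$. I do not anticipate any real obstacle here; the only point requiring care is the bookkeeping of implicit constants, to ensure that absorbing both $d^{1/2}$ and $\gamma$ into a single $\log N$ term preserves the quantifier ``whenever $\snr \gtrsim \log N$'' with an appropriately enlarged constant. No new probabilistic estimate is needed, since all the randomness is already handled inside Theorem \ref{thm:MainResult}.
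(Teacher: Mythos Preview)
Your proposal is correct and matches the paper's approach: the paper simply states that Corollaries \ref{cor:smalld}--\ref{cor:larged} follow from Theorems \ref{thm:MainResult} and \ref{thm:MainTheorem_HighD} by restricting to the various dimensional regimes, and your argument is exactly this specialization, with the only substantive step being the observation that $d^{1/2}\log N + \gamma = O(\log N)$ when $d=O(1)$.
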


The above condition is natural given that when $d=1$, the maximum norm of a collection of $N$ independent sub-Gaussian random variables with variance approxy $\sigma^2_{\max}$ scales as $\sigmax  (\log N)^{1/2}$  \citep{vershynin2012high}. Thus clearly communities can be exactly recovered only for $\mudiff \gtrsim \sigmax (\log N)^{1/2}$, i.e., $\snr \gtrsim \log N$, and this scaling continues to hold for small $d$.

\begin{corollary}[Moderate Dimensional Regime]
	\label{cor:modd}
	Under the assumptions of Theorem \ref{thm:MainResult}, if  $\Omega(1) \leq d \leq O\left([N \log N]^2\right)$, then with probability at least $1-O(N^{-1})$ the {empirical} rank $r$ {CMDS} embedding {is} a perfect geometric representation of the labels whenever
	\begin{align*} 
	\snr &\gtrsim d^{1/2} \log N. 
	\end{align*}
\end{corollary}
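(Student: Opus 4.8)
The plan is to deduce this corollary directly from Theorem \ref{thm:MainResult} by showing that, throughout the moderate dimensional regime, the additive term $\gamma$ in the sufficient condition is subsumed by $d^{1/2}\log N$, so that the general bound collapses to the claimed one.

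First I would invoke Theorem \ref{thm:MainResult}: under Conditions \ref{cond:balance} and \ref{cond:eigenvalue}, the empirical rank $r$ CMDS embedding is a perfect geometric representation of the labels with probability at least $1-O(N^{-1})$ whenever $\snr \gtrsim d^{1/2}\log N + \gamma$, where $\gamma = d/N$. Since the probability guarantee and the regularity conditions carry over verbatim, it suffices to verify that the right-hand side is of order $d^{1/2}\log N$ whenever $\Omega(1)\le d \le O\big([N\log N]^2\big)$.

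Next I would carry out the elementary comparison between the two terms. The upper endpoint of the regime gives $d \le c\,(N\log N)^2$ for some constant $c$, hence $d^{1/2}\le c^{1/2}N\log N$. Multiplying through by the factor $d^{1/2}/N$ yields
\[
\gamma = \frac{d}{N} = d^{1/2}\cdot\frac{d^{1/2}}{N} \le c^{1/2}\, d^{1/2}\log N,
\]
so that $\gamma = O(d^{1/2}\log N)$. Consequently $d^{1/2}\log N + \gamma \asymp d^{1/2}\log N$, and the sufficient condition of Theorem \ref{thm:MainResult} is implied by the single requirement $\snr \gtrsim d^{1/2}\log N$, which is exactly the claim.

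There is no genuine obstacle here, as the corollary is a direct specialization of Theorem \ref{thm:MainResult} rather than an independent result; the only substantive content is the boundary computation above, which pinpoints why the threshold $(N\log N)^2$ is the natural upper limit of the regime (it is precisely where $\gamma$ ceases to be dominated by $d^{1/2}\log N$). The one point worth confirming is compatibility with the lower endpoint $d=\Omega(1)$: there $d^{1/2}$ is bounded below by a constant, so the $\log N$ factor in the stated rate is never spurious and the displayed scaling remains meaningful. Collecting these observations completes the proof.
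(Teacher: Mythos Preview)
Your proposal is correct and matches the paper's approach: the corollary is stated as an immediate consequence of Theorem \ref{thm:MainResult}, obtained by observing that the regime assumption $d\le O\big([N\log N]^2\big)$ forces $\gamma = d/N \lesssim d^{1/2}\log N$, so the two-term condition $\snr\gtrsim d^{1/2}\log N+\gamma$ collapses to the single term. The paper does not spell out the computation, but your argument is exactly the intended one.
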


This scaling is better with respect to the dimension than might naively be expected. Consider for example the isotropic case when $\eta_i \sim N(0, \sigmax^2 I)$. Since the data points essentially fall on spheres of radius $\sigmax \sqrt d$ about the means, one might expect that exact recovery requires $\mudiff^2 \gtrsim \sigmax^2 d$ (which gives $\snr \gtrsim d$). However, thanks to the measure concentration phenomenon, the measure of a sphere concentrates rapidly around the equator  in high dimensions, which allows for perfect recovery of the communities even when the spheres of radius $\sigmax d^{1/2}$ about the means intersect. Thus when $N$ is of constant order,  $\snr \gtrsim \sqrt d$ suffices. Recall that  $\gamma={d}/{N}$.


\begin{corollary}[High Dimensional Regime]
	\label{cor:larged}
	Under the assumptions of Theorem \ref{thm:MainResult}, if $d\geq\Omega\left([N\log N]^2\right)$, then with probability at least $1-O(N^{-1})$ the {empirical rank $r$ CMDS} embedding {is} a perfect geometric representation of the labels whenever 
	\begin{align}\label{equ:larged} \snr \gtrsim \gamma \, . \end{align}
	Furthermore, if Conditions \ref{cond:CCP} and \ref{cond:equal_eigs} also hold, then it  suffices that  
	\begin{align}
	\label{equ:larged_rho1}
	\snr \gtrsim  \sqrt{d} .
	\end{align}
\end{corollary}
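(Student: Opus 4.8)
The plan is to obtain both parts as direct specializations of the two main theorems already established, so the work reduces entirely to comparing the competing terms in their sufficient conditions within the prescribed dimensional regime. For the first assertion I would start from the sufficient condition of Theorem \ref{thm:MainResult}, namely that a perfect geometric representation holds with probability at least $1-O(N^{-1})$ whenever $\snr \gtrsim d^{1/2}\log N + \gamma$. To conclude that $\snr \gtrsim \gamma$ already suffices in the high-dimensional regime, it is enough to show that the first summand is dominated by the second, i.e. that $d^{1/2}\log N \lesssim \gamma = d/N$ under the standing assumption $d \geq \Omega\!\left([N\log N]^2\right)$.

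The key step is the elementary equivalence
\[
d^{1/2}\log N \lesssim \frac{d}{N}
\quad\Longleftrightarrow\quad
N\log N \lesssim d^{1/2}
\quad\Longleftrightarrow\quad
(N\log N)^2 \lesssim d,
\]
where the first equivalence follows by multiplying through by $N d^{-1/2}$ and the second by squaring. Thus the regime $d \geq \Omega\!\left([N\log N]^2\right)$ is precisely the crossover point at which the $d^{1/2}\log N$ term is absorbed into $\gamma$, giving $d^{1/2}\log N + \gamma \lesssim \gamma$ and hence reducing the condition of Theorem \ref{thm:MainResult} to \eqref{equ:larged}. I would emphasize that this matching of the regime boundary to the term-comparison threshold is what makes the corollary clean, and it is worth checking the constants so that the $\lesssim$ is genuinely uniform in $N$ and $d$.

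For the second assertion I would instead invoke Theorem \ref{thm:MainTheorem_HighD}, whose hypotheses now hold because Conditions \ref{cond:CCP} and \ref{cond:equal_eigs} are additionally assumed. That theorem already records that when $d \gtrsim N^2$ the bound $\snr \gtrsim d^{1/2}$ suffices. Since $d \geq \Omega\!\left([N\log N]^2\right) = \Omega\!\left(N^2 (\log N)^2\right)$ certainly implies $d \gtrsim N^2$, the sharper conclusion \eqref{equ:larged_rho1} follows immediately, again on the event of probability at least $1-O(N^{-1})$.

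I do not anticipate a genuine technical obstacle here, since all the probabilistic and spectral work is already discharged in Theorems \ref{thm:MainResult} and \ref{thm:MainTheorem_HighD}; the corollary is purely a matter of specializing the dimensional regime. The one point requiring a little care is to verify that the regime threshold $[N\log N]^2$ is exactly the boundary at which $d^{1/2}\log N$ and $\gamma$ change their relative order, which is what the displayed equivalence confirms, and to note in passing that the same $d\geq\Omega([N\log N]^2)$ assumption comfortably implies the $d\gtrsim N^2$ condition needed for the sharper rate.
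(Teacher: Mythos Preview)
Your proposal is correct and matches the paper's intended approach: the paper presents this corollary as an immediate specialization of Theorems~\ref{thm:MainResult} and~\ref{thm:MainTheorem_HighD} to the regime $d\geq\Omega([N\log N]^2)$, and your argument---showing $d^{1/2}\log N \lesssim \gamma$ exactly when $d\gtrsim (N\log N)^2$ for the first part, and noting $d\gtrsim (N\log N)^2$ implies $d\gtrsim N^2$ for the second---is precisely the term comparison the paper has in mind.
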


Inequality \ref{equ:larged_rho1} is a much less restrictive requirement  than Inequality \ref{equ:larged} for exact recovery in high dimensions, but only valid when {$\rho = \lambda_1/\lambda_r\approx 1$}, and the {noise vectors  satisfy the convex concentration property.}  To see this, observe that when $N=O(1)$, Inequality \ref{equ:larged} reduces to $\snr \gtrsim d$ while Inequality \ref{equ:larged_rho1} reduces to $\snr \gtrsim d^{1/2}$.  This is due to the fact that in high dimensions estimating the eigenvalues of $MM^\T$ from the eigenvalues of $(JX)(JX)^\T$ is more problematic than the corresponding eigenspace estimation.  Indeed, since {$\lambda_i\left[\Ex\{(JX)(JX)^\T\}\right] = \lambda_i(J(MM^\T+\mtr(\Sigma)I)J^\T)$}, the  {CMDS} eigenvalues are consistently over-estimated by the order of $\mtr(\Sigma)$. However if $\mtr(\Sigma)$ is known or can be accurately approximated, a simple modification of the CMDS procedure makes Inequality \ref{equ:larged_rho1} sufficient for general $\rho$. Condition \ref{cond:equal_eigs} 
can then be removed from Theorem \ref{thm:MainTheorem_HighD}, and one obtains the following theorem. The proof is a simple modification of the proof of Theorem \ref{thm:MainTheorem_HighD} and is omitted for brevity. 

\begin{theorem}
	\label{thm:MainTheorem_HighD_ModifiedEigs}
	Assume Conditions \ref{cond:balance}--\ref{cond:CCP}. Consider  the modified multidimensional scaling embedding  $\hat{\Lambda}_r^{1/2}\widetilde{V}_r$ instead of $\widetilde{\Lambda}^{1/2}_r\widetilde{V}_r$, where $\hat{\Lambda}_r= \widetilde{\Lambda}_r - \mtr(\Sigma)I_r$. Then with probability at least $1-O(N^{-1})$ the modified  rank $r$ CMDS embedding $\hat{\Lambda}_r^{1/2}\widetilde{V}_r$ is a perfect geometric representation of the labels whenever
	\begin{align*}
	\snr &\gtrsim N + d^{1/2}.
	\end{align*}
\end{theorem}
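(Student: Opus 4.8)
The plan is to reprise the proof of Theorem \ref{thm:MainTheorem_HighD} almost verbatim, changing only the single step that handles the empirical eigenvalues. Throughout, write $P=(JX)(JX)^\T-MM^\T$ for the CMDS error matrix and recall $JM=M$, since the centered means satisfy $1^\T M=0$. Because under Condition \ref{cond:CCP} the noise rows share the covariance $\Sigma$, a direct computation gives $\Ex[(JX)(JX)^\T]=MM^\T+\mtr(\Sigma)J$, so $P-\mtr(\Sigma)J$ is mean zero and $\Ex[B]$ has eigenvalues $\lambda_1+\mtr(\Sigma)\ge\cdots\ge\lambda_s+\mtr(\Sigma)$ on the signal subspace, then $\mtr(\Sigma)$ on $1^{\perp}$ off the signal subspace, and $0$ along $1$. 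The first thing I would do is import, unchanged, the two quantitative ingredients already established for Theorem \ref{thm:MainTheorem_HighD}: the spectral-norm bound on the centered error matrix $\norm{P-\mtr(\Sigma)J}_2$, obtained from Condition \ref{cond:CCP} by treating the cross terms $MH^\T J+JHM^\T$ and the Gram term $JHH^\T J-\mtr(\Sigma)J$ separately through a convex-concentration (Hanson--Wright type) argument; and the Davis--Kahan eigenspace bound on $\norm{\widetilde{V}_rR-V_r}$ that this yields, using that the relevant eigengap of $\Ex[B]$ equals $\lambda_r-\lambda_{r+1}$ (or $\lambda_r$ when $r=s$) and is controlled by Condition \ref{cond:eigenvalue}. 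Neither ingredient uses Condition \ref{cond:equal_eigs}.

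The only genuinely new step is the eigenvalue estimate, and it is where the modification pays off. By Weyl's inequality applied to $B$ and $\Ex[B]$, $|\widetilde{\lambda}_i-(\lambda_i+\mtr(\Sigma))|\le\norm{B-\Ex[B]}_2=\norm{P-\mtr(\Sigma)J}_2$ for every $i\le r\le s$, so subtracting the bias gives directly $|\hat{\lambda}_i-\lambda_i|=|\widetilde{\lambda}_i-\mtr(\Sigma)-\lambda_i|\le\norm{P-\mtr(\Sigma)J}_2$, whence $|\hat{\lambda}_i^{1/2}-\lambda_i^{1/2}|\le\norm{P-\mtr(\Sigma)J}_2/\lambda_r^{1/2}$ for all $i\le r$. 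Crucially this holds uniformly in $i$ with no constraint on $\rho=\lambda_1/\lambda_r$: the additive debiasing $\hat{\Lambda}_r=\widetilde{\Lambda}_r-\mtr(\Sigma)I_r$ removes the per-eigenvalue bias $\mtr(\Sigma)$ exactly. This is precisely what lets me dispense with Condition \ref{cond:equal_eigs}. In the original argument the same bias is instead absorbed by a single scalar rescaling $\alpha\widetilde{\Lambda}_r^{1/2}$, and $\alpha$ can simultaneously correct $(\lambda_i+\mtr(\Sigma))^{1/2}\mapsto\lambda_i^{1/2}$ for all $i$ only when the $\lambda_i$ are comparable, which is exactly what Condition \ref{cond:equal_eigs} enforces.

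I would then assemble the embedding bound through the same decomposition used for Theorem \ref{thm:MainTheorem_HighD}, now with $\hat{\Lambda}_r$ in place of the rescaled $\widetilde{\Lambda}_r$,
\[
V_r\Lambda_r^{1/2}-\widetilde{V}_r\hat{\Lambda}_r^{1/2}R=(V_r-\widetilde{V}_rR)\Lambda_r^{1/2}+\widetilde{V}_r\bigl(R\Lambda_r^{1/2}-\hat{\Lambda}_r^{1/2}R\bigr),
\]
taking max norms and bounding the first term by the Davis--Kahan eigenspace bound scaled by $\lambda_1^{1/2}$ and the second by the eigenvalue estimate above. Converting the spectral-norm control of $P-\mtr(\Sigma)J$ into a max-norm control of the embedding is exactly the step that, as noted in the discussion comparing Theorems \ref{thm:MainResult} and \ref{thm:MainTheorem_HighD}, replaces the factor $\norm{P}_{\infty}/(\mumax N)$ by $\norm{P-\mtr(\Sigma)J}_2/(\mumax N^{1/2})$. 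The conclusion then follows by feeding this bound into Lemma \ref{lem:RelateER_EmbeddingPert}: a perfect geometric representation holds once $\mudiff>12\sqrt{r}\,\norm{V_r\Lambda_r^{1/2}-\widetilde{V}_r\hat{\Lambda}_r^{1/2}R}_{\max}$, and under Conditions \ref{cond:balance}--\ref{cond:CCP} (so that $\mumax\sim\mudiff$ and $\rho,\zeta,\xi=O(1)$) this reduces to $\snr\gtrsim N+d^{1/2}$, identically to Theorem \ref{thm:MainTheorem_HighD} but now for arbitrary $\rho$.

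The main obstacle I anticipate is not the eigenvalue step, which is short, but two bookkeeping points. First, one must ensure $\hat{\lambda}_r>0$ so that $\hat{\Lambda}_r^{1/2}$ is well defined; this needs $\lambda_r\gtrsim\norm{P-\mtr(\Sigma)J}_2$ and is guaranteed by the $\snr$ hypothesis together with Condition \ref{cond:eigenvalue}. Second, the max-norm-from-spectral-norm conversion is delicate because it must account for the rotation $R$ and track entrywise rather than operator-norm quantities, and the term $R\Lambda_r^{1/2}-\hat{\Lambda}_r^{1/2}R$ carries a commutator between $R$ and $\hat{\Lambda}_r^{1/2}$ that must be absorbed. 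Both of these, however, are already handled in the proof of Theorem \ref{thm:MainTheorem_HighD}, so the modification is genuinely confined to replacing the scalar $\alpha$ by the subtraction of $\mtr(\Sigma)I_r$, which is why the same rate $\snr\gtrsim N+d^{1/2}$ persists while Condition \ref{cond:equal_eigs} can be dropped.
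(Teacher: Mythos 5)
Your proposal is correct and matches the paper's own proof in essentially every step: the paper likewise reruns the Theorem \ref{thm:MainTheorem_HighD} argument (via Lemma \ref{lem:embedding_pert_CommonCovCCP}) with the scalar rescaling $\alpha$ set to $1$ and $\hat{\lambda}_i=\widetilde{\lambda}_i-\mtr(\Sigma)$ replacing $\alpha\widetilde{\lambda}_i$, uses Weyl's inequality against $MM^\T+\mtr(\Sigma)J$ to obtain $|\hat{\lambda}_i-\lambda_i|\le\norm{\error-\mtr(\Sigma)J}_2$, and observes that the $\tau_\rho$ term---the only place Condition \ref{cond:equal_eigs} entered---vanishes, leaving the rate computation of Theorem \ref{thm:MainTheorem_HighD} unchanged. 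The one cosmetic difference is your decomposition, which attaches the true $\Lambda_r^{1/2}$ to the eigenvector error and the empirical $\widetilde{V}_r$ to the eigenvalue error, whereas the paper keeps the delocalized $V_r$ (with $\norm{V_r}_{\max}\lesssim N^{-1/2}$, which $\widetilde{V}_r$ does not directly enjoy) next to the eigenvalue error; your form reduces to the paper's after writing $\widetilde{V}_r=(\widetilde{V}_rR-V_r)R^\T+V_rR^\T$ and applying one more triangle inequality, so nothing is lost.
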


{Theorem \ref{thm:MainTheorem_HighD_ModifiedEigs} is unsatisfactory because $\Sigma$ is generally unkown. However it does illustrate that a sharper rate can be obtained in high dimensions by unbiasing the eigenvalues.} 

{An important question in practice is how to choose $r$. As illustrated in the last plot of Figure \ref{fig:ExampleMDSEmbedding}, in high dimensions it is critical to strongly reduce the dimension before clustering. There are multiple recommendations in the literature. \cite{cattell1966scree} suggests choosing $r$ as the index corresponding to the ``elbow" in a sorted eigenvalue (scree) plot. \cite{cox2000multidimensional} suggest choosing $r$ by considering $\sum_{i=1}^r \widetilde{\lambda}_i /  \sum_{i=1}^{N-1} \widetilde{\lambda}_i$, which is the proportion of the data variance characterized by a rank $r$ CMDS embedding (this proportion should be large, while $r$ should be small). When the noise is $N(0,\sigma^2I_d)$, \cite{peterfreund2018multidimensional} suggest choosing $r$ as the largest index such that $\widetilde{\lambda}_r > \sigma^2(1 + \sqrt{(N-1)/d})^2$. We suggest choosing $r$ as the index which optimizes the eigenratio, a hueristic which is shown in \cite{lam2012factor} to have several desirable properties. Specifically,
	\begin{equation}
	\label{equ:eigenratio}
	\hat{r} := \text{argmax}_{1 \leq i \leq R} \ \widetilde{\lambda}_{i} / \widetilde{\lambda}_{i+1},
	\end{equation}
	where $R$ is chosen so that only eigenratios computed from eigenvalues which are sufficiently bounded away from zero are considered (for example, choose $R$ as the largest integer such that $\widetilde{\lambda}_{R+1} > 10^{-8}$). We observe that Condition \ref{cond:eigenvalue} has the greatest chance of being satisfied when $r$ is chosen according to this hueristic. We also note that for Condition \ref{cond:eigenvalue} to be satisfied, $r$ can be at most $k-1$.}

{\section{Relationship with Existing Results for Exact Recovery}}

Exact recovery using the CMDS and clustering procedure is related to the stochastic block model, which is extensively studied in the literature. In a stochastic block model, there are $N$ points drawn from $k$ communities, and two points in communities {$q$} and $m$ are connected with probability $P_{q,m}$ \citep{abbe2018community}. The resulting graph {corresponds} to an $N$ by $N$ adjacency matrix $A$, where $A_{i,j} \in \{0,1\}$ {with $\Ex(A_{i,j}) = P_{\ell_i,\ell_j}$ and $\ell$ the label vector with entries in $[k]$}. The matrix $A$ can be viewed as a similarity matrix analagous to the matrix $B$ in multidimensional scaling. However, in contrast to stochastic block models, where the upper triangular entries of $A$ are independent, the corresponding entries of $B$ are dependent because, for example, both $B_{i,j}$ and $B_{i, k}$ depend on {$x_i$}. The {$i$th} row of $A$ can be treated as {an} $N$ dimensional random vector drawn from a probability distribution on $\{0,1\}^N$ with mean {$\mu_{\ell_i} =(P_{\ell_i, \ell_j})_{j \in [N]}$}. With  $\mudiff$ defined as before, and  $\sigmax^2 = \max_{q,m}P_{q,m}(1-P_{q,m})$, \cite{mcsherry2001spectral} showed that the communities can be exactly recovered with high probability when $\snr \gtrsim k(\zeta+\log N)$. See also \cite{bickel2009nonparametric, rohe2011spectral}. Recently, \cite{abbe2016exact} showed that for two balanced communities, this characterization of exact recovery is sharp, i.e., exact recovery fails for $\snr \lesssim \log N$. In the low-dimensional regime and under some mild conditions, we obtain the same lower bound for the signal-to-noise ratio. Although exact recovery is a strong condition and in practice one may only care about recovering a high percentage of correctly labeled points, analysis of exact recovery is closely related to the evaluation of various partitioning algorithms in the stochastic block model literature \citep{bui1987graph, dyer1989solution, boppana1987eigenvalues, snijders1997estimation, bickel2009nonparametric}. As noted in \cite{condon2001algorithms},``Such analyses are useful in that they can provide insight on when and why certain algorithmic approaches are likely to be effective." Similarly, the analysis of the current article provides insight into when CMDS can be effective, {and how the ambient dimension impacts embedding quality.}

Our model is also closely related to the label recovery of a mixture of Gaussian distributions. \cite{achlioptas2005spectral} used spectral projection to show that when $N\gg \zeta k(d+\log k)$, the labels for general Gaussians are exactly recovered for $\snr \gtrsim \zeta + k\log(kN)+k^2$. Building upon the approach in \cite{kumar2010clustering}, \cite{awasthi2012improved} propose a spectral algorithm which improves the dependence on $k$ and correctly classifies a fixed percentage of points when  $\snr \gtrsim 1 \vee \gamma$.
When the Gaussians are spherical and $N$ is sufficiently large, the spectral algorithm proposed by \cite{vempala2004spectral} exactly recovers the distribution labels when $\snr \gtrsim \sqrt{k\log N}$. This condition is less restrictive by a factor of $\sqrt{\log N}$ than our result for the low $d$ regime, but the method requires access to the data coordinates as well as isotropic covariance and is thus not directly comparable.  All of the above methods for recovering a mixture of Gaussians require a singular value decomposition of the data matrix, which is generally not available in the CMDS context. \\

\section{Numerical Studies}\label{sec:SimulationResults}

The CMDS and clustering procedure is applied to synthetic data in Section \ref{subsec:synthetic_data} and to data from real applications in Section \ref{subsec:realworlddata}.

\subsection{Simulations for Synthetic Data}
\label{subsec:synthetic_data}

This section reports simulation results for synthetic data which {illustrate the sharpness of} the scaling conditions in Corollaries \ref{cor:smalld}--\ref{cor:larged}. The simulations consider Gaussian distributions with both isotropic and more general covariance matrices, and for simplicity, use balanced clusters ($\zeta=k$) and $r=\text{rank}(MM^\T)$. Thus for all simulations, Conditions 1--3 are satisfied, except $\rho \gg 1$ in Simulation 2d as described below.

\begin{table}	
	{	\def~{\hphantom{0}}
		\def\arraystretch{1.25}
		\begin{center}
			\begin{small}
				\begin{tabular}{ccccccccc}
					Sim. & $k$ & $d/N$ & $r$ & $J$ & $\Sigma$ & $\{\mu_i\}_{i=1}^k$ & $\rho$ & Plot \\[.1cm]
					1a & 4 & $d=2$ & 2 & 20 & $\sigmax^2I_d$ & $\pm (10^{-7}, 0), \pm (0,10^{-7})$ & 1 & Fig. \ref{fig:fixedD_k4} \\ 
					1b & 4 & $d=10$ & 3 & 50 & Toeplitz & $\mu_i(i)=10^{-7}; \mu_i(j)=0$ for $j\ne i$ & 1 & Fig. \ref{fig:fixedD_Toeplitz} \\
					1c & 4 & $d=20$ & 3 & 50 & KNN & $\mu_i(i)=10^{-7}; \mu_i(j)=0$ for $j\ne i$ & 1 & Fig. \ref{fig:fixedD_KNN} \\ 
					2a & 2 & $N=200$ & 1 & 20 & $\sigmax^2I_d$ & $\mu_i(i)=1; \mu_i(j)=0$ for $j\ne i$ & 1 & Fig. \ref{fig:fixedN_k2} \\
					2b & 5 & $N=100$ & 4 & 20 & $\sigmax^2I_d$ & $\mu_i(i)=0.5; \mu_i(j)=0$ for $j\ne i$ & 1 & Fig. \ref{fig:fixedN_k5_rho1} \\
					2c & 5 & $N=100$ & 4 & 20 & Toeplitz & $\mu_i(i)=0.5; \mu_i(j)=0$ for $j\ne i$ & 1 & Fig. \ref{fig:fixedN_k5_Toeplitz} \\    
					2d & 5 & $N=100$ & 4 & 20 & KNN & $\mu_i(i)=0.5; \mu_i(j)=0$ for $j\ne i$ & 1 & Fig. \ref{fig:fixedN_k5_KNN} \\ 
					2e & 3 & $N=60$ & 2 & 20 & $\sigmax^2I_d$ & $(0,0), (0.4,0.6), (1,1)$ & 75.19 & Fig. \ref{fig:fixedN_k3_rhoLarge} \\
					2f & 5 & $N=100$ & 4 & 20 & $\sigmax^2I_d$ & $(0,0,0,0,\vec{0})$, $\pm (0.49,0.51,0,0, \vec{0})$, & 10,000 & Fig. \ref{fig:fixedN_k5_rhoLarge} \\ 
					& & & & & & $\pm (0,0,0.49,0.51, \vec{0})$, where $\vec{0}\in \mathbb{R}^{d-4}$ & & \\ 
				\end{tabular}
			\end{small}
		\end{center}
	}
	\caption{Simulation settings}
	\label{tab:SimSettings}	
\end{table}	

Table \ref{tab:SimSettings} records the parameters, means, and covariance matrices used for all simulations. Covariance matrices considered include isotropic, Toeplitz, and $K$ nearest neighbor (KNN).  The Toeplitz covariance matrix is defined by $\Sigma_{ij} = \sigmax^2 0.7^{|i-j|}$. The $K$ nearest neighbor covariance matrix is defined as follows: $d$ points $z_1,\ldots,z_d$ are randomly sampling from a two-dimensional cube $[0,c]^2$, and $\Sigma_{ij}=\Sigma_{ji}=\sigmax^2\norm{z_i-z_j}_2$ if $z_i$ is one of $z_j's$ $K$ nearest neighbors, with $\Sigma_{ii}=\sigmax^2$. Simulations 1c, 2d use $K=4,10$ and $c=1, 0.5$ in this construction.

Simulations 1a--1c target the small $d$ regime. The probability of exact recovery using the CMDS and clustering procedure is estimated for fixed $d$, $\mudiff$ on a grid of $\sigmax$, $N$ values. For each grid point, $J$ independent realizations are created, and we record the percentage of realizations for which single linkage clustering on the CMDS embedding perfectly recovers the labels. 

The empirical probability estimates for this set of simulations are plotted in Figure \ref{fig:ExactRecoveryFixedD}; the horizontal axis shows $\log \log N$ and the vertical axis $\log \snr$.  Dark blue indicates exact recovery was achieved by 0\% of the simulations; yellow indicates exact recovery was achieved by 100\% of the simulations.  Since the boundary between exact recovery and failure of exact recovery is linear with a slope of 1 (after an initial plateau in some of the simulations), the boundary is $\log \snr = \log \log N+C$ for some constant $C$, which is equivalent to $\snr = e^{C}\log N$. Since we fix a small $d$, the condition in Corollary \ref{cor:smalld} guaranteeing exact recovery reduces to $\snr \gtrsim \log N$. The simulation results thus confirm that this lower bound for the signal-to-noise ratio is sharp.

\begin{figure}[tb]
	\captionsetup[subfigure]{justification=centering}
	\centering
	\begin{subfigure}{.31\textwidth}
		\includegraphics[width=.95\textwidth]{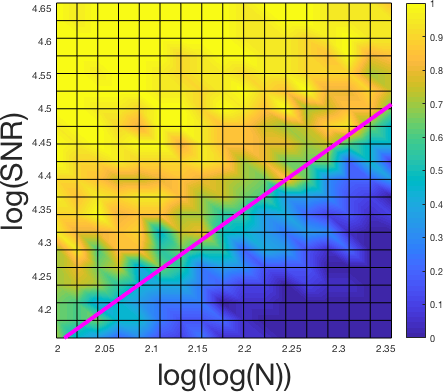}	
		\caption{Isotropic}
		\label{fig:fixedD_k4}
	\end{subfigure}	
	\begin{subfigure}{.31\textwidth}	
		\includegraphics[width=.95\textwidth]{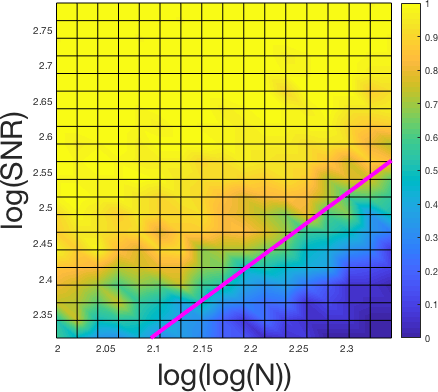}
		\caption{Toeplitz}
		\label{fig:fixedD_Toeplitz}
	\end{subfigure}
	\begin{subfigure}{.31\textwidth}
		\includegraphics[width=.95\textwidth]{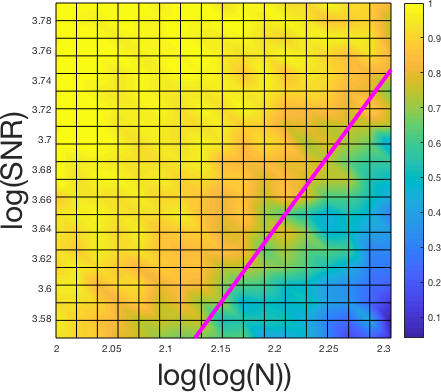}
		\caption{$K$ nearest neighbor}
		\label{fig:fixedD_KNN}
	\end{subfigure}	 
	\caption{
		Pink lines characterize the boundary and are defined by $\log \snr=\log \log N+C$, where $C=2.15, 0.22, 1.44$ for Figures \ref{fig:fixedD_k4}, \ref{fig:fixedD_Toeplitz}, \ref{fig:fixedD_KNN} respectively.}	
	\label{fig:ExactRecoveryFixedD}
\end{figure}

\begin{figure}[tb]
	\centering
	\captionsetup[subfigure]{justification=centering}
	\begin{subfigure}[b]{.31\textwidth}
		\includegraphics[width=.95\textwidth]{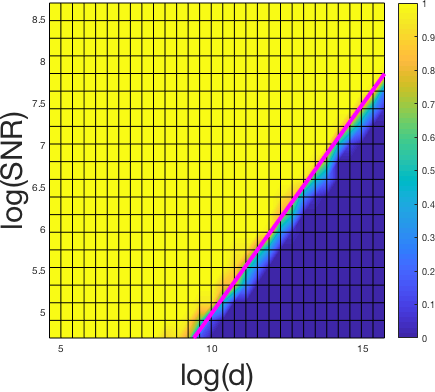}
		\caption{$k=2$, $\rho=1$}
		\label{fig:fixedN_k2}
	\end{subfigure}	
	\begin{subfigure}[b]{.31\textwidth}
		\includegraphics[width=.95\textwidth]{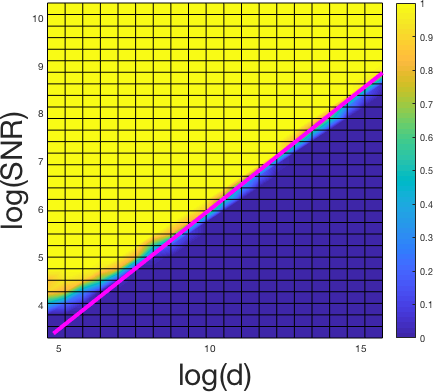}
		\caption{$k=5$, $\rho=1$}
		\label{fig:fixedN_k5_rho1}
	\end{subfigure}	
	\begin{subfigure}[b]{.31\textwidth}
		\includegraphics[width=.95\textwidth]{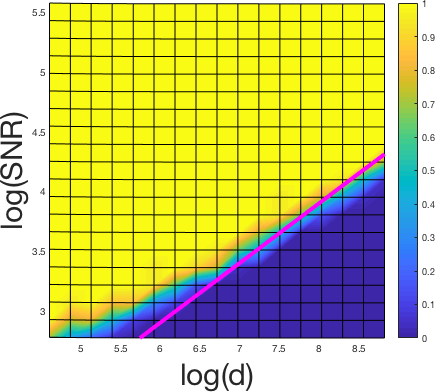}
		\caption{$k=5$, $\rho=1$}
		\label{fig:fixedN_k5_Toeplitz}
	\end{subfigure}	
	\begin{subfigure}[b]{.31\textwidth}
		\includegraphics[width=.95\textwidth]{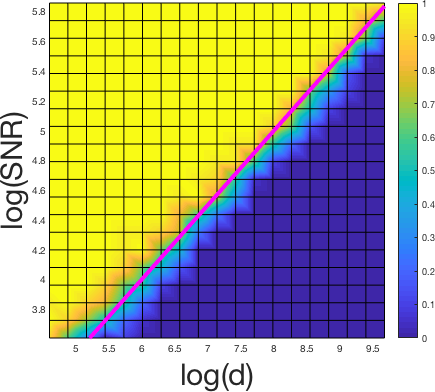}
		\caption{$k=5$, $\rho=1$}
		\label{fig:fixedN_k5_KNN}
	\end{subfigure}		
	\begin{subfigure}[b]{.31\textwidth}
		\includegraphics[width=.95\textwidth]{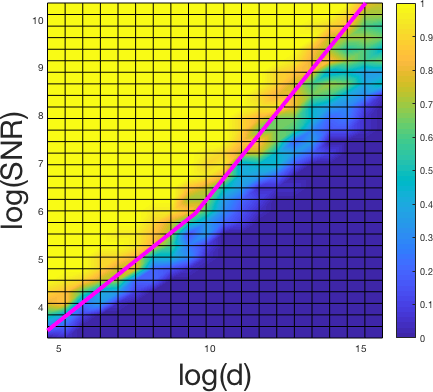}
		\caption{$k=3$, $\rho=75.188$} 
		\label{fig:fixedN_k3_rhoLarge}
	\end{subfigure}	
	\begin{subfigure}[b]{.31\textwidth}
		\includegraphics[width=.95\textwidth]{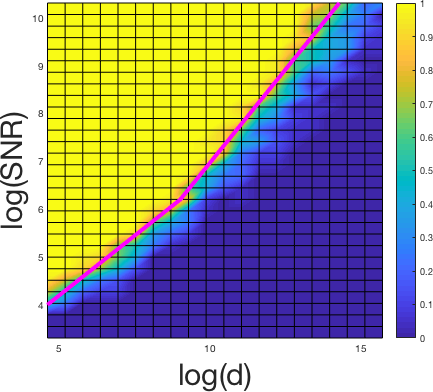}
		\caption{$k=5$, $\rho=10,000$}
		\label{fig:fixedN_k5_rhoLarge}
	\end{subfigure}	
	\caption{
		Pink lines characterize the boundary and are defined by $\log \snr=0.5\log d+C$ where $C=0,1,-0.1,1$ for Figures \ref{fig:fixedN_k2}, \ref{fig:fixedN_k5_rho1}, \ref{fig:fixedN_k5_Toeplitz}, \ref{fig:fixedN_k5_KNN} respectively. For Figures \ref{fig:fixedN_k3_rhoLarge}, \ref{fig:fixedN_k5_rhoLarge}, the left line is $\log \snr=0.5\log d+C$ for $C=1.2,1.7$ and the right line is $\log \snr=0.78\log d+C$ for $C=-1.46,-0.82$.}
	\label{fig:ExactRecoveryFixedN}
\end{figure}

Simulations 2a--2f target the moderate and {high} $d$ regimes. We estimate the probability of exact recovery for fixed $N$, $\mudiff$ on a grid of $\sigmax$, $d$ values in the same manner as for the first set of simulations. Except for Simulations 2c and 2d, $d$ ranges from $100$ to $6,553,600$, so choice of $N$ ensures coverage of both the moderate and  {high} $d$ regimes. For Simulations 2c and 2d, computational resources limited our investigation to the moderate $d$ regime.

The empirical probability estimates for the second set of simulations are plotted in Figure \ref{fig:ExactRecoveryFixedN}; the horizontal axis shows $\log d$ and the vertical axis $\log \snr$.
There is a qualitative similarity between Figures \ref{fig:fixedN_k2}--\ref{fig:fixedN_k5_KNN} (where $\rho=1$) and Figures \ref{fig:fixedN_k3_rhoLarge}--\ref{fig:fixedN_k5_rhoLarge} (where $\rho > 1$). When $\rho=1$ the boundary between exact recovery and failure of exact recovery is linear with a slope of $0.5$. The boundary is thus characterized by $\log(\snr)=0.5\log d +C$ for some constant $C$, which is equivalent to  $\snr=e^C d^{1/2}$. Since $N$ is small and fixed, the condition in Corollary \ref{cor:modd} guaranteeing exact recovery in the moderate $d$ regime reduces to $\snr \gtrsim d^{1/2}$, and since $\rho=1$, the condition in Corollary \ref{cor:larged} guaranteeing exact recovery in the large $d$ regime also reduces to $\snr \gtrsim d^{1/2}$. Thus when $\rho=1$, simulation results confirm that the lower bounds on the signal-to-noise ratio given by Corollaries \ref{cor:modd} and \ref{cor:larged} are sharp. 

When $\rho \ne 1$  (Figures \ref{fig:fixedN_k3_rhoLarge} and \ref{fig:fixedN_k5_rhoLarge}), a phase transition occurs, as the boundary initially characterized by a line with slope $0.5$ (moderate $d$ regime) steepens to a line with slope 0.78 as the dimension is increased (high $d$ regime). The authors hypothesize that the boundary converges to a line with slope 1 as $d\rightarrow \infty$, but computational abilities limited $d\leq6,553,600$. These results confirm that even when $\rho \ne 1$, the scaling condition $\snr \gtrsim \sqrt d$ given by Corollary \ref{cor:modd} for the moderate $d$ regime is sharp. In addition, the steepening of the boundary line suggests that when $\rho\ne 1$, the scaling condition $\snr \gtrsim d$ given by Corollary \ref{cor:larged} for the large $d$ regime may also be sharp. 

\begin{rmk}
	Additional simulation results confirm that if the eigenvalues are unbiased using the procedure described in Theorem \ref{thm:MainTheorem_HighD_ModifiedEigs}, the steepening of the boundary line does not occur for general $\rho$, and the boundary continues to be characterized by $\snr=C d^{1/2}$ in the high dimensional regime. 
\end{rmk}

\subsection{Applications}
\label{subsec:realworlddata}

This section illustrates the usefulness of CMDS for clustering on cancer gene-expression and natural language datasets. We applied CMDS to the Pan-Cancer Atlas gene expression data downloaded from the University of California at Santa Cruz Xena Platform \citep{goldman2019}. The Pan-Cancer Atlas, the output of The Cancer Genome Atlas project, consists of genomic, molecular, and clinical data of  11,060 tumors across 33  cancer types.  For each patient, $d=20,531$ gene expressions with batch correction were recorded. {We constructed two datasets from the Pan-Cancer Atlas: Pan-Cancer1 consisting of breast invasive carcinoma ($n_1 = 1,218$), kidney renal clear cell carcinoma ($n_2=606$), and thyroid carcinoma ($n_4=572$); and Pan-Cancer2 consisting of kidney chromophobe ($n_1=91$), kidney renal clear cell carcinoma ($n_2=606$), and kidney renal papillary cell carcinoma ($n_3=323$). Pan-Cancer1 contains three very different cancer types (breast/kidney/thyroid), while Pan-Cancer2 contained three more similar types of cancer (all kidney). For each data set, a pairwise distance matrix was constructed from the Euclidean distance between the patients' gene expressions. The data sets were then embedded into $r$ dimensions via CMDS as described in Section \ref{sec:ModelFormulation}, and $k$-means clustering was applied to cluster the data into $k=3$ clusters. The embedding dimension $r$ was chosen as the index maximizing the eigenratio as described in Section \ref{sec:MainResults} (see Equation \ref{equ:eigenratio}). Since the data is labeled, we can explicitly compute $M$ and $H$, and thus estimate the $\snr$ by $\widehat{\snr} := \mudiff^2/\hat{\sigma}^2_{\max}$, where $\hat{\sigma}^2_{\max} := \norm{H^{\T}H/N}_2$. For Pan-Cancer1, $\widehat{\snr}=16.64$, and for Pan-Cancer2, $\widehat{\snr}=4.56$, which indicates that Pan-Cancer2 is a more difficult data set to cluster. Results are summarized in Table \ref{tab:cancerdata}, and the CMDS embeddings are shown in Figures \ref{fig:PANCAN1} and \ref{fig:PANCAN2}. We obtained high accuracy in clustering cancer type on Pan-Cancer1 (99.12\%), and a lower accuracy (83.24\%) on Pan-Cancer2. In addition, Figure \ref{fig:PANCAN1} shows that the Pan-Cancer1 clusters are well separated in the CMDS embedding, while Figure \ref{fig:PANCAN2} shows the Pan-Cancer2 clusters are more mixed.} We emphasize these results were obtained only through pair-wise distances between samples without the need to access full data. 

%


\begin{table}
	\begin{center}
		{
		\begin{tabular}{c c c c c}
	Data set & Sample size ($N$)  & Embedding rank ($r$) & Estimated $\snr$ & Accuracy \\ 
	Pan-Cancer1 & $2,396$ & 2 & 16.64 &99.12\% \\
	Pan-Cancer2  & $1,020$ & 3 & 4.56 & 83.24\% 
\end{tabular}
		}
 		\caption{Classification accuracy for Pan-Cancer}	    
		\label{tab:cancerdata}
	\end{center}	
\end{table}

We also applied CMDS to the natural language datasets available from \url{http://www-alg.ist.hokudai.ac.jp/datasets.html}. 
The data points considered are the names of famous individuals, taken from four possible classes: classical composers, artists, authors, and mathematicians; the data points thus do not have geometric coordinates. The pair-wise dissimilarity between data points is then defined using Google distance, a metric suggested by \cite{cilibrasi2006automatic} for sets of natural language terms. The Google distance between two terms is computed from the relative frequency count of the number of web pages containing both terms returned by an automated web search \citep{poland2006clustering}. We analyzed two such datasets: people3 ($k=3,N=75$), a natural language data set containing the names of 25 classical composers, 25 artists, and 25 authors; and people4 ($k=4,N=100$), the people3 data set with the names of 25 mathematicians added.

{Once again we selected the embedding dimension $r$ using Equation \ref{equ:eigenratio}, and ran $k$-means clustering on the resulting CMDS. Since data coordinates are not available, we must estimate the $\snr$ from the full-dimensional CMDS embedding.  Since the data is not Euclidean, the pairwise distance matrices are no longer postitive semi-definite; negative eigenvalues were thus discarded when computing the full-dimensional embedding, so that the pairwise distance matrix is projected onto the closest positive semi-definite matrix. The resulting clustering accuracies are in Table \ref{tab:peopledata}, and Figures \ref{fig:people3} and \ref{fig:people4} show the first two coordinates of the embedding for People3 and People4. The estimated $\snr$ is quite similar for the two data sets, as is the clustering accuracy.}
For people4, the third embedding coordinate is needed to separate the clusters but is not displayed in Figure \ref{fig:people4}.

\begin{table}[tbh]
		\begin{center}
		{
		\begin{tabular}{c c c c c}
			Data set & Sample size ($N$)  & Embedding rank ($r$) & Estimated $\snr$ & Accuracy \\ 
			People3  & $75$ & 2 & 11.22 & 97.33\% \\
			People4 & $100$ & 3 & 11.74 & 94.00\% \\ 
		\end{tabular}
		}
		\caption{Classification accuracy for natural language data sets}
		\label{tab:peopledata}
		\end{center}	    
\end{table}

\begin{figure}[tb]
	\captionsetup[subfigure]{justification=centering}
	\centering
	\begin{subfigure}{.225\textwidth}
		\includegraphics[width=.95\textwidth]{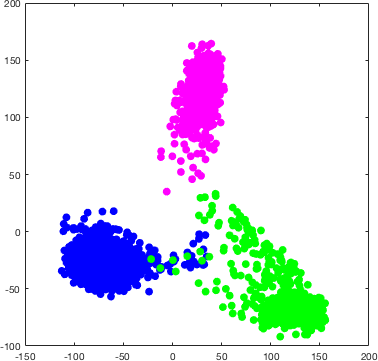}
		\caption{Pan-Cancer1}
		\label{fig:PANCAN1}
	\end{subfigure}	
	\begin{subfigure}{.225\textwidth}
		\includegraphics[width=.95\textwidth]{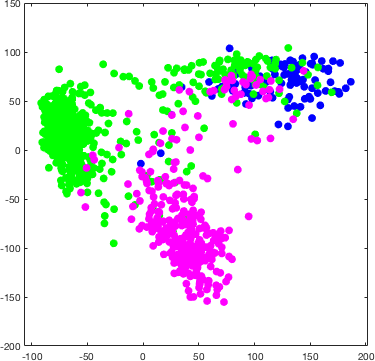}
		\caption{Pan-Cancer2}
		\label{fig:PANCAN2}
	\end{subfigure}	
	\begin{subfigure}{.225\textwidth}
		\includegraphics[width=.95\textwidth]{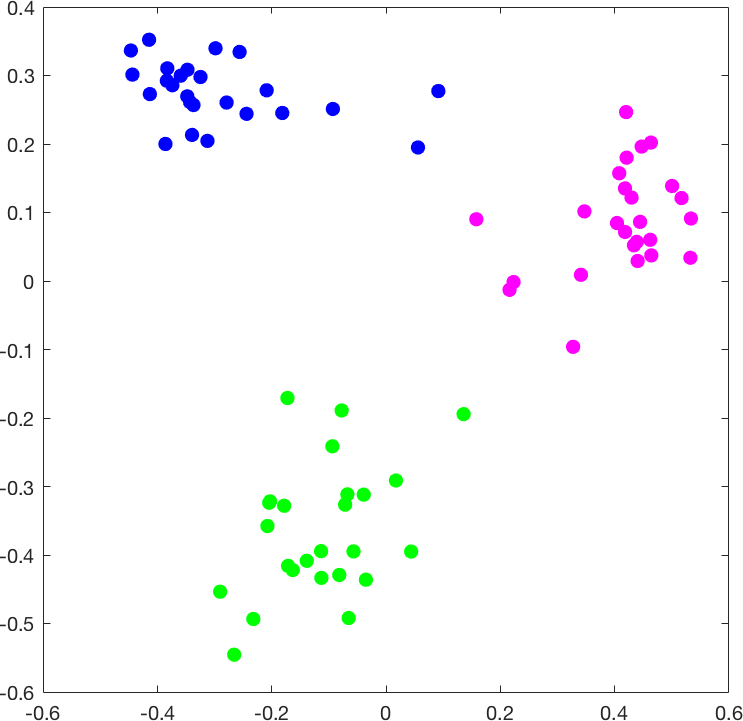}
		\caption{People3}
		\label{fig:people3}
	\end{subfigure}		
	\begin{subfigure}{.225\textwidth}
		\includegraphics[width=.95\textwidth]{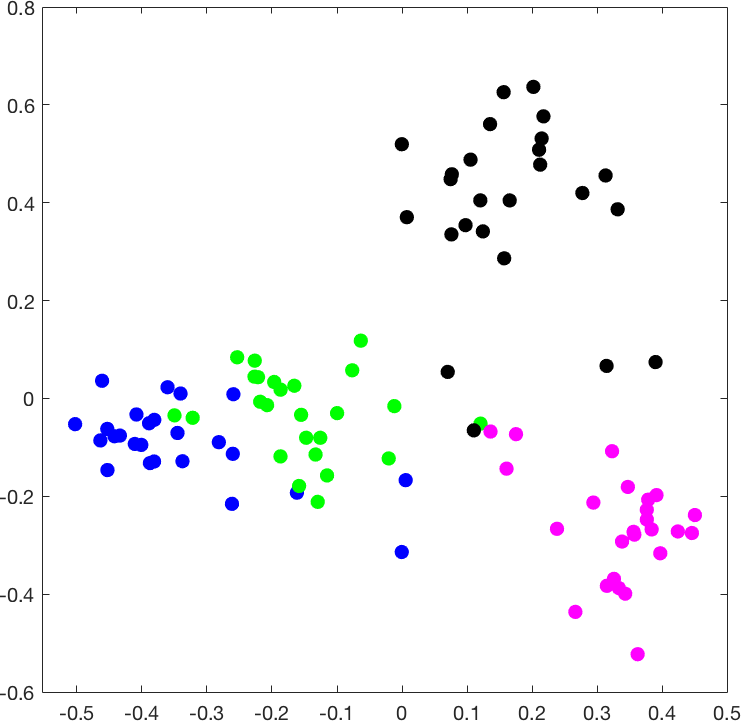}
		\caption{People4}
		\label{fig:people4}
	\end{subfigure}	
	\caption{First two coordinates of the embedding obtained by applying CMDS to Pan-Cancer and people data sets. Each color corresponds to a cancer type for Pan-Cancer and a people group for people data sets.}
	\label{fig:people}
\end{figure}

\section{Discussion}

This paper lays down a theoretical framework for analyzing the embedding quality of CMDS {for noisy data.} Based on this framework, we further derive sharp scaling conditions on the signal-to-noise ratio {which guarantee}  a follow-up clustering procedure  {will} exactly recover the labels  {for} a collection of sub-Gaussian communities. These results generalize scaling conditions for exact recovery in stochastic block models to the more complex case of a network with dependent edges, giving the first strong theoretical guarantees for CMDS in the literature. We make no assumptions on the relationship between $N$ and $d$, and numerical simulations confirm the derived scaling conditions are near-sharp. 
{In addition, our results illustrate 
how CMDS may break down in the presence of high-dimensional noise.}
Many interesting open questions remain for future research, including extending the current framework to {include} {heavy-tailed} noise distributions, more general metrics, and {incorporating} sparse measurement error in the pairwise distance matrix. Our results suggest that the performance of CMDS can be significantly improved by unbiasing the sample eigenvalues in the high dimensional regime. It remains to develop and analyze empirically robust unbiasing procedures.

\appendix 
\renewcommand{\theequation}{S.\arabic{equation}}
\renewcommand{\thetable}{S.\arabic{table}}
\renewcommand{\thefigure}{S.\arabic{figure}}
\renewcommand{\thesection}{S.\arabic{section}}
\renewcommand{\thelemma}{S.\arabic{lemma}}

\vspace{30pt}
\noindent{\bf \LARGE Appendix}

\section{Proofs for Main Theorems}
\label{app:Thm1and2}

This section collects the proofs of the main theorems, that is, Theorems \ref{thm:eigenvec_pert}, \ref{thm:MDS_embedding},  \ref{thm:MainResult}, \ref{thm:MainTheorem_HighD} and \ref{thm:MainTheorem_HighD_ModifiedEigs}. For notational simplicity, we introduce  the error matrix  corresponding to the noisy CMDS embedding 
\begin{align}
\label{equ:error_matrix}
P &:= (JX)(JX)^\T - MM^{\T}, \,
\end{align}
which will be frequently referred to throughout the appendix. 

\subsection{Proofs of Theorems \ref{thm:eigenvec_pert} and \ref{thm:MDS_embedding}}
Before proving Theorems \ref{thm:eigenvec_pert} and \ref{thm:MDS_embedding}, we first provide a lemma on the structure of the eigenvectors of $MM^\T$ by establishing that the eigenvectors  are delocalized. This structure lemma is critical for our proof.
\begin{lem}
	\label{lem:eigvec_delocaliztion}
	Let $v$ be an eigenvector of $MM^{\T}$ corresponding to a non-zero eigenvalue. Then $\norm{v}_{\infty} \leq n_{\min}^{-1/2}$.
\end{lem}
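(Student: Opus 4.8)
The plan is to exploit the block structure of $MM^\T$ induced by the cluster partition. Since $M_{i\cdot} = \mu_{\ell_i}^\T$, the rows of $M$ are identical within each cluster, so the $(i,i')$ entry of $MM^\T$ equals $\mu_{\ell_i}^\T\mu_{\ell_{i'}}$ and depends only on the labels $\ell_i,\ell_{i'}$. Grouping the indices by cluster, $MM^\T$ is therefore a block matrix whose $(j,m)$ block is the constant matrix $(\mu_j^\T\mu_m)\mathbf{1}_{n_j}\mathbf{1}_{n_m}^\T$, where $\mathbf{1}_{n}$ denotes the all-ones vector of length $n$.

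The key step, and the one I expect to be the main obstacle, is to show that any eigenvector $v$ associated with a non-zero eigenvalue must be constant on each cluster. First I would use the standard fact that $\mathrm{Range}(MM^\T) = \mathrm{Col}(M)$ (which follows from $\mathrm{null}(MM^\T) = \mathrm{null}(M^\T)$ upon taking orthogonal complements). Any vector in $\mathrm{Col}(M)$ has the form $Mw$ for some $w\in\R^d$, and since $M$ has identical rows within each cluster, $Mw$ takes a single value on each cluster, i.e. it is piecewise constant with respect to the partition. Because $v$ corresponds to a non-zero eigenvalue $\lambda$, we may write $v = \lambda^{-1}MM^\T v$, so $v \in \mathrm{Range}(MM^\T) = \mathrm{Col}(M)$; hence $v$ is constant on each cluster.

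Given this structure, I would parametrize $v$ by its $k$ cluster values, writing $v_i = a_j$ whenever $\ell_i = j$. Normalizing $v$ to be a unit eigenvector gives $1 = \norm{v}_2^2 = \sum_{j=1}^k n_j a_j^2$. Since every summand is nonnegative, for each fixed $m$ we have $n_m a_m^2 \leq \sum_{j=1}^k n_j a_j^2 = 1$, hence $a_m^2 \leq 1/n_m \leq 1/\nmin$. Taking the maximum over $m$ then yields $\norm{v}_\infty = \max_m |a_m| \leq \nmin^{-1/2}$, which is exactly the claim.

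In summary, the entire argument rests on the cluster-wise constancy established in the second paragraph; once that is in hand, the bound is an immediate consequence of the unit-norm normalization together with the elementary inequality $n_m a_m^2 \leq 1$. I would note only that the conclusion is stated for a normalized eigenvector, and that the proof uses nothing about the specific means beyond the fact that $M$ is constant across the rows of each cluster.
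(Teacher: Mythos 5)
Your proof is correct. The second half (parametrizing $v$ by its cluster values and using $1 = \sum_j n_j a_j^2 \geq n_m a_m^2$) matches the paper's norm-counting step essentially verbatim, but you establish the key fact --- that $v$ is constant on clusters --- by a genuinely different mechanism. The paper writes out $MM^\T$ in explicit block-constant form, computes $(MM^\T v)(i)$ entrywise to see it depends only on $\ell_i$, and then invokes the eigen-equation $\lambda v(i) = (MM^\T v)(i)$ with $\lambda \neq 0$ to force $v(i) = v(j)$ whenever $\ell_i = \ell_j$. You instead use the abstract identity $\mathrm{Range}(MM^\T) = \mathrm{Col}(M)$ (via $\mathrm{null}(MM^\T) = \mathrm{null}(M^\T)$ and symmetry of $MM^\T$), write $v = \lambda^{-1} MM^\T v$ to place $v$ in $\mathrm{Col}(M)$, and observe that every vector $Mw$ is automatically piecewise constant because the rows of $M$ repeat within clusters. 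Your route is shorter and more conceptual: it never needs the explicit block decomposition of $MM^\T$, only the repetition of rows of $M$, and it would generalize immediately to any Gram matrix built from a matrix with repeated rows. The paper's route is more elementary and self-contained, requiring no facts about ranges and null spaces beyond the eigen-equation itself. A minor bonus of your version is that it yields the per-cluster bound $|a_m| \leq n_m^{-1/2}$, slightly sharper than the uniform $\nmin^{-1/2}$ actually needed.
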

\begin{proof}
	Let $v$ be an eigenvector of $MM^{\T}$ corresponding to a non-zero eigenvalue  $\lambda$.  We first establish that $v$ is constant on the blocks defined by the community labels $\ell$, i.e. $v(i)=v(j)$ whenever $\ell_i=\ell_j$. Ordering the points according to $\ell$, $MM^{\T}$ has the following block constant form
	\begin{align*}
	MM^{\T} &= \begin{bmatrix} \langle \mu_1, \mu_1\rangle 1_{n_1}1_{n_1}^{\T} & \langle \mu_1, \mu_2\rangle 1_{n_1}1_{n_2}^{\T} & \ldots &  \langle \mu_1, \mu_k\rangle 1_{n_1}1_{n_k}^{\T} \\
	\langle \mu_2, \mu_1\rangle 1_{n_2}1_{n_1}^{\T} & \langle \mu_2, \mu_2\rangle 1_{n_2}1_{n_2}^{\T} & \ldots &  \langle \mu_2, \mu_k\rangle 1_{n_2}1_{n_k}^{\T} \\
	 \vdots & & & \vdots \\
	 \langle \mu_k, \mu_1\rangle 1_{n_k}1_{n_1}^{\T} & \langle \mu_k, \mu_2\rangle 1_{n_k}1_{n_2}^{\T} & \ldots &  \langle \mu_k, \mu_k\rangle 1_{n_k}1_{n_k}^{\T}
				\end{bmatrix}, \, 
	\end{align*}
	so that
	\begin{align*}
	(MM^{\T}v)(i) &= \langle \mu_{\ell_i}, \mu_1\rangle \sum_{j=1}^{n_1} v(j) +\langle \mu_{\ell_i}, \mu_2\rangle \sum_{j=n_1+1}^{n_1+n_2} v(j) + \ldots + \langle \mu_{\ell_i}, \mu_k\rangle \sum_{j=N-n_k+1}^{N} v(j) \, .
	\end{align*}
	For any $(i,j)$ such that $\ell_i=\ell_j$, the above calculation indicates that $(MM^{\T}v)(i) =(MM^{\T}v)(j)$. Since $\lambda\ne 0$, 
	\begin{align*}
	\lambda v(i) &= (MM^{\T}v)(i) =(MM^{\T}v)(j)=\lambda v(j) \quad \implies \quad v(i)=v(j)\, .
	\end{align*} 
	Since $v$ is block constant, the entry achieving $\norm{v}_{\infty}$ is repeated at least $n_{\min}$ times, so that
	\begin{align*}
	1 &= \norm{v}_2^2 \geq  n_{\min}\norm{v}_{\infty}^2 \quad \implies \quad \norm{v}_{\infty} \leq n_{\min}^{-1/2}\, .
	\end{align*} 
\end{proof}

\noindent \textbf{Proof of Theorem  \ref{thm:eigenvec_pert}}
	To bound $\norm{\widetilde{V}_rR - V_r}_{\max}$, 
	we apply Theorem \ref{thm:LowRankInfinityEigPert} in Appendix \ref{app:EigPertResults} to obtain entry-wise control on the perturbation of the eigenspace. 
	We first bound $\kappa$ and $\epsilon$. {By Lemma \ref{lem:eigvec_delocaliztion}, we have  $\norm{v_i}_{\infty} \leq n_{\min}^{-1/2}$ for $1\leq i\leq r$,}  and thus
	\begin{align*}
	\kappa &\leq \frac{N}{r}\left(\norm{v_1}^2_{\infty}+\cdots+\norm{v_r}^2_{\infty}\right) \leq \frac{N}{r}\left(\frac{r}{\nmin}\right) =\zeta \, .
	\end{align*}
	Thus we have low coherence since $\zeta=O(1)$ by Condition \ref{cond:balance}. Also since $\norm{v_iv_i^\T}_\infty \leq N\norm{v_iv_i^\T}_{\max}=N\norm{v_i}^2_{\infty} \leq N\nmin^{-1} = \zeta$, we have
	\begin{align*}
	\epsilon &= \norm[\Big]{ \sum_{i=r+1}^s \lambda_iv_iv_i^\T }_{\infty} 
	\leq \lambda_{r+1}(s-r)\max_i\norm{v_iv_i^T}_\infty \leq \lambda_{r+1}(s-r)\zeta \leq \frac{\lambda_r}{2},
	\end{align*}
	since by Condition \ref{cond:eigenvalue} $\lambda_{r+1} \leq \lambda_r/\{2\zeta(s-r)\}$. Recalling $MM^\T$ is positive semi-definite, $|\lambda_r| - \epsilon \geq \lambda_r/2$. Thus by Theorem \ref{thm:LowRankInfinityEigPert} there exists a rotation matrix $R$ such that $\norm{\widetilde{V}_rR-V_r}_{\text{max}} \lesssim r^{5/2}\zeta^2\norm{\error}_{\infty}  / ( \lambda_r N^{1/2} )  \lesssim  \norm{\error}_{\infty}/( \lambda_r N^{1/2})$.  We next verify that $\lambda_i \sim N\mumax^2$ for $1\leq i\leq r$. Since $\lambda_1+\ldots+\lambda_s = \norm{M}_\rF^2$ and $n_{\min} \mumax^2 \leq \norm{M}_\rF^2 \leq N \mumax^2$, one obtains 
	\begin{align*}
	\frac{1}{\zeta(k-1)}N\mumax^2 \leq \frac{1}{s}\nmin\mumax^2 &\leq \lambda_1 \leq N \mumax^2 \, .
	\end{align*} 
	Since $k,\zeta=O(1)$, $\lambda_1 \sim N \mumax^2$ and  $\rho=O(1)$, we have $\lambda_i \sim N \mumax^2$ for $i\leq r$. Thus
	\begin{align*}
	\norm{\widetilde{V}_rR-V_r}_{\max} &\lesssim \frac{\norm{P}_{\infty}}{\lambda_r N^{1/2}} \lesssim \frac{\norm{P}_{\infty}}{\mumax^2 N^{3/2}} \, .
	\end{align*}
	Applying Lemma \ref{lem:RMTInfinityControl} in Appendix \ref{app:RMTLemmas} to bound $\norm{P}_{\infty}$ completes the proof.	
\begin{flushright} \qed \end{flushright}

\noindent \textbf{Proof of Theorem \ref{thm:MDS_embedding}}
	First note that $\snr \gtrsim 1 + \gamma$ and Lemma \ref{lem:RMTSpectralControl} guarantee $\norm{P}_2 \lesssim \lambda_r$ with probability at least $1-O(N^{-1})$. Thus by Lemma \ref{lem:embedding_pert_general} in Appendix \ref{app:EmbedPertResults},  
	\begin{align*}
	\norm{\widetilde{V}_r\widetilde{\Lambda}^{1/2}_rR - V_r\Lambda^{1/2}_r}_{\max} &\lesssim \frac{\norm{\error}_2}{(\lambda_r N)^{1/2}} + \frac{\norm{\error}_2^{1/2}}{N^{1/2}}+ \frac{\norm{\error}_{\infty}}{(\lambda_r N)^{1/2}} \, .
	\end{align*}
	Applying Lemmas \ref{lem:RMTSpectralControl} and \ref{lem:RMTInfinityControl} to bound $\norm{\error}_2, \norm{\error}_{\infty}$ completes the proof.	
\begin{flushright} \qed \end{flushright}

\subsection{Proof of Theorem \ref{thm:MainResult} }
\label{app:MainResult}
{Inequality \ref{equ:MainCondition}, i.e. $\snr\gtrsim  \sqrt{d}\log N +\gamma$,  {guarantees $\mumax^2 \gtrsim \sigmax^2(1+\gamma)$}  since $\xi\sim 1$, which further ensures $\mumax^2 \gtrsim \sigmax\mumax(1+\sqrt{\gamma})$. Thus by Lemma \ref{lem:RMTSpectralControl}, the error matrix $P$ defined in Equation \ref{equ:error_matrix} satisfies $\norm{P}_2/N \lesssim \sigmax\mumax(1+\sqrt{\gamma})+\sigmax^2(1+\gamma) \lesssim \mumax^2 \lesssim \lambda_r/N$ with probability at least $1-O(N^{-1})$, which ensures $\norm{P}_2 \lesssim \lambda_r$.} Thus by  Lemmas \ref{lem:embedding_pert_general} and \ref{lem:RelateER_EmbeddingPert}, it suffices to show
\begin{align}
\label{equ:mudiff_condA}
\mudiff &\gtrsim \frac{\norm{\error}_2}{(\lambda_r N)^{1/2}} + \frac{\norm{\error}_2^{1/2}}{N^{1/2}} +\frac{\norm{\error}_{\infty}}{(\lambda_r N)^{1/2}} \, .
\end{align}
Applying Lemmas \ref{lem:RMTSpectralControl} and \ref{lem:RMTInfinityControl} and utilizing $\lambda_r \gtrsim \mumax^2 N$ {(see the proof of Theorem \ref{thm:eigenvec_pert})}, we obtain
\begin{align*}
\frac{\norm{\error}_2}{(\lambda_r N)^{1/2}} &\lesssim \sigmax(1+\sqrt{\gamma})+\frac{\sigmax^2}{\mumax}(1+\gamma) \\
\frac{\norm{\error}_2^{1/2}}{N^{1/2}} &\lesssim \left[\sigmax\mumax(1+\sqrt{\gamma})\right]^{1/2} + \sigmax(1+\gamma)^{1/2} \\
\frac{\norm{\error}_{\infty}}{(\lambda_r N)^{1/2}}&\lesssim \sigmax\sqrt{\log N} + \frac{\sigmax^2}{\mumax}(\sqrt{d}\log N + \gamma)
\end{align*}
with probability at least $1-O(N^{-1})$. Plugging the above bounds into Inequality \ref{equ:mudiff_condA}, it is sufficient for
\begin{align*}
\mudiff &\gtrsim \left[\sigmax\mumax(1+\sqrt{\gamma})\right]^{1/2}+\sigmax(\sqrt{\log N}+\sqrt{\gamma})+ \frac{\sigmax^2}{\mumax}(\sqrt{d}\log N + \gamma)\, .
\end{align*}
Writing this as three distinct inequalities (one for each term on the right hand side) and solving for $\snr$ yields that exact recovery is obtained when all of the following hold:
\begin{align*}
\snr &\gtrsim \xi^2(1+\gamma) \\
\snr &\gtrsim \log N + \gamma \\
\snr &\gtrsim \frac{1}{\xi}(\sqrt{d}\log N + \gamma)\, ,
\end{align*}
where $\xi = \mumax/\mudiff$.  Since $\xi = O(1)$ by Condition \ref{cond:balance}, the last inequality implies the first two, and the theorem is proved.	  \hfill \qed

\subsection{Proof of Theorem \ref{thm:MainTheorem_HighD}}
\label{app:ProofOfMainResultHighD}

{Since $\xi\sim 1$ and Inequality \ref{equ:MainConditionEqualEigs} guarantees $\sigmax\mumax+\sigmax^2(\sqrt{\gamma}\vee 1) \lesssim \mumax^2$, by Lemma \ref{lem:RMTCenteredSpectralControl}, the error matrix $P$ defined in Equation \ref{equ:error_matrix} satisfies $\norm{P-\mtr(\Sigma)J}_2/N \lesssim \sigmax\mumax+\sigmax^2(\sqrt{\gamma}\vee 1) \lesssim \mumax^2 \lesssim \lambda_r/N$ with probability at least $1-O(N^{-1})$, which ensures $\norm{P-\mtr(\Sigma)J}_2 \lesssim \lambda_r$.} Thus by combining Lemmas \ref{lem:embedding_pert_CommonCovCCP} and \ref{lem:RelateER_EmbeddingPert}, to prove the theorem, it suffices to show
\begin{align}
\label{equ:mudiff_condB}
\mudiff &\gtrsim \frac{\tau_{\rho}}{N^{1/2}} + \frac{ \norm{\error - \mtr(\Sigma)J}_2^{1/2}}{N^{1/2}}+ \frac{\norm{\error - \mtr(\Sigma)J}_2}{\lambda_r^{1/2}},
\end{align}
where 
\[ \tau_\rho =\frac{\lambda_r^{1/2}(\rho-1)}{\big\{\lambda_1/\mtr(\Sigma)+1\big\} } \, . \] 
Applying Lemma \ref{lem:RMTCenteredSpectralControl} and utilizing {$\lambda_r \sim \mumax^2 N$} {(see the proof of Theorem \ref{thm:eigenvec_pert})}, we obtain
\begin{align*}
\frac{ \norm{\error - \mtr(\Sigma)J}_2^{1/2}}{N^{1/2}} &\lesssim \left( \sigmax\mumax \right)^{1/2}  + \sigmax \left(\gamma^{1/4} \vee 1\right) :=  \text{(I)}+\text{(II)}, \\
 \frac{\norm{\error - \mtr(\Sigma)J}_2}{\lambda_r^{1/2}} &\lesssim \sigmax N^{1/2}  + \frac{\sigmax^2}{\mumax}N^{1/2}\left(\gamma^{1/2} \vee 1\right):= \text{(III)}+ \text{(IV)},
\end{align*}
with probability at least $1-O(N^{-1})$. Using  $\lambda_r\sim \mumax^2 N$, $\lambda_1/\mtr(\Sigma)+1\geq1$ and Condition \ref{cond:equal_eigs}, $\mudiff/\mumax \gtrsim \rho-1$  implies that  
\$
\mudiff \gtrsim \lambda_r^{1/2}(\rho-1)/\left[\left\{\lambda_1/\mtr(\Sigma)+1\right\}N^{1/2}\right] = \tau_{\rho}/N^{1/2}.
\$ 
Thus, 
to prove Inequality \ref{equ:mudiff_condB}, we only need to show
\begin{align*}
\mudiff &\gtrsim \text{(I)}+\text{(II)}+\text{(III)}+\text{(IV)}.
\end{align*}
Since $\text{(I)}+\text{(II)}+\text{(III)}+\text{(IV)}\sim \text{(I)}\vee \text{(II)}\vee \text{(III)}\vee\text{(IV)}$, the exact recovery is obtained when all of the following hold:
{\begin{align*}
\snr &\gtrsim \xi^2,\quad
\snr \gtrsim \left(\gamma^{1/2} \vee 1\right),\quad
\snr \gtrsim N,\quad
\snr \gtrsim \frac{1}{\xi} N^{1/2}\left( \gamma^{1/2} \vee 1\right),
\end{align*}}
where $\xi = \mumax/\mudiff$. Since $\xi = O(1)$ by Condition \ref{cond:balance}, the fourth inequality implies the second and the third inequality implies the first. Thus the theorem holds if
{\begin{align*}
\snr &\gtrsim N + d^{1/2}.
\end{align*}}	
This completes the proof.    \hfill \qed

\subsection{Proof of Theorem \ref{thm:MainTheorem_HighD_ModifiedEigs}}
We simply need to establish the following modified version of Lemma \ref{lem:embedding_pert_CommonCovCCP}:
\begin{align*}
\norm{\widetilde{V}_r\hat{\Lambda}^{1/2}_rR - V_r\Lambda^{1/2}_r}_{\max} &\lesssim \frac{ \norm{\error - \mtr(\Sigma)J}_2^{1/2}}{N^{1/2}}+ \frac{\norm{\error - \mtr(\Sigma)J}_2}{\lambda_r^{1/2}} \, .
\end{align*}
The proof of Lemma \ref{lem:embedding_pert_CommonCovCCP} is modified by taking $\alpha=1$ and replacing $\widetilde{\lambda}_i$ with $\hat{\lambda}_i$ (recall $\hat{\lambda}_i = \widetilde{\lambda}_i - \mtr(\Sigma)$). The $\tau_\rho$ term is eliminated.
More precisely, from the argument in Lemma \ref{lem:embedding_pert_general}, we have
\begin{align*}
\norm{\widetilde{V}_r\hat{\Lambda}^{1/2}_rR - V_r\Lambda^{1/2}_r}_{\max}
&\lesssim \underbrace{ \hat{\lambda}_1^{1/2}\norm{\widetilde{V}_rR-V_r}_{\max}}_{:=\Gamma_1}
+\underbrace{\norm{V_r}_{\max}\norm{\hat{\Lambda}^{1/2}_rR-R\Lambda^{1/2}_r}_{\max}}_{:=\Gamma_2}.
\end{align*}
Since 
\begin{align*}
 \hat{\lambda}_1^{1/2}	&\leq \frac{\norm{\error-\mtr(\Sigma)J}_2}{\lambda_1^{1/2}}+\lambda_1^{1/2} \lesssim \lambda_1^{1/2} \, ,
\end{align*}
(this was the previous bound for $\alpha\widetilde{\lambda}_1^{1/2}$), $\Gamma_1$ can be bounded in exactly the same way as in Lemma \ref{lem:embedding_pert_CommonCovCCP}.
In addition, since
\begin{align*}
|\hat{\lambda}_i^{1/2}-\lambda_j^{1/2}| &\leq \frac{|\hat{\lambda}_i-\lambda_i|+|\lambda_i-\lambda_j|}{\lambda_j^{1/2}} \leq   \frac{\norm{\error-\mtr(\Sigma)J}_2}{\lambda_r^{1/2}} + \frac{|\lambda_i-\lambda_j|}{\lambda_r^{1/2}} \, ,
\end{align*}
$\Gamma_2$ can be bounded in the same way as in Lemma \ref{lem:embedding_pert_CommonCovCCP} but omitting the $\tau_\rho$ term (previously $|\alpha\widetilde{\lambda}_i^{1/2}-\lambda_j^{1/2}|$ was bounded by the above plus $\tau_\rho$.) The proof of Theorem \ref{thm:MainTheorem_HighD_ModifiedEigs} is then identical to the proof of Theorem \ref{thm:MainTheorem_HighD}, except that Condition \ref{cond:equal_eigs} is not needed to control the $\tau_\rho$ term.   \hfill \qed

\section{Two Embedding Perturbation Lemmas}
\label{app:EmbedPertResults}

In this Appendix we state and prove two lemmas, which are needed for proving the main theorems. The first, which holds under more general assumptions, is used to prove Theorems {\ref{thm:MDS_embedding} and} \ref{thm:MainResult}; the second 
is used to prove Theorem \ref{thm:MainTheorem_HighD}. Recall that $P = (JX)(JX)^\T - MM^{\T}.$

\begin{lem}
	\label{lem:embedding_pert_general}
	Assume Conditions \ref{cond:balance}, \ref{cond:eigenvalue},  and $\norm{\error}_2 \lesssim \lambda_r$. Then there exists a rotation matrix $R$ such that
	\begin{align*}
	\norm{\widetilde{V}_r\widetilde{\Lambda}_r^{1/2}R - V_r\Lambda_r^{1/2}}_{\max} &\lesssim \frac{\norm{\error}_2}{(\lambda_r N)^{1/2}} + \frac{\norm{\error}_2^{1/2}}{N^{1/2}} +\frac{\norm{\error}_{\infty}}{(\lambda_r N)^{1/2}}  \, .
	\end{align*}
\end{lem}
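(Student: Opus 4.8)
The plan is to align the empirical and ideal embeddings by the rotation $R$ supplied by Theorem~\ref{thm:eigenvec_pert} (taken to be the orthogonal Procrustes alignment of $\widetilde{V}_r$ to $V_r$) and split the error into an eigenvector piece and an eigenvalue piece via the exact identity
\begin{align*}
\widetilde{V}_r\widetilde{\Lambda}_r^{1/2}R - V_r\Lambda_r^{1/2} = \underbrace{(\widetilde{V}_rR - V_r)\,(R^\T\widetilde{\Lambda}_r^{1/2}R)}_{=:T_1} + \underbrace{V_rR^\T\,(\widetilde{\Lambda}_r^{1/2}R - R\Lambda_r^{1/2})}_{=:T_2}.
\end{align*}
Since $r=O(1)$, for an $N\times r$ matrix $A$ and an $r\times r$ matrix $C$ one has $\norm{AC}_{\max}\lesssim \norm{A}_{\max}\norm{C}_2$, so it suffices to bound the two factors in each term separately.

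For $T_1$, I would use $\norm{T_1}_{\max}\lesssim \norm{\widetilde{V}_rR - V_r}_{\max}\,\norm{R^\T\widetilde{\Lambda}_r^{1/2}R}_2 = \widetilde{\lambda}_1^{1/2}\,\norm{\widetilde{V}_rR - V_r}_{\max}$. Weyl's inequality together with $\norm{\error}_2\lesssim\lambda_r$ and $\rho=O(1)$ from Condition~\ref{cond:balance} gives $\widetilde{\lambda}_1^{1/2}\lesssim\lambda_1^{1/2}\sim\lambda_r^{1/2}$, and the intermediate estimate established inside the proof of Theorem~\ref{thm:eigenvec_pert} supplies $\norm{\widetilde{V}_rR - V_r}_{\max}\lesssim \norm{\error}_{\infty}/(\lambda_r N^{1/2})$. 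Multiplying yields $\norm{T_1}_{\max}\lesssim \norm{\error}_{\infty}/(\lambda_r N)^{1/2}$, which is precisely the third target term.

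For $T_2$, I would first note $\norm{V_rR^\T}_{\max}\lesssim\norm{V_r}_{\max}\le \nmin^{-1/2}\lesssim N^{-1/2}$, using the delocalization bound of Lemma~\ref{lem:eigvec_delocaliztion} together with $\zeta=O(1)$. This reduces the task to bounding the $r\times r$ matrix $\widetilde{\Lambda}_r^{1/2}R - R\Lambda_r^{1/2}$, whose $(i,j)$ entry equals $R_{ij}(\widetilde{\lambda}_i^{1/2}-\lambda_j^{1/2})$. Writing $\widetilde{\lambda}_i^{1/2}-\lambda_j^{1/2}=(\widetilde{\lambda}_i-\lambda_j)/(\widetilde{\lambda}_i^{1/2}+\lambda_j^{1/2})$, bounding the denominator below by $\lambda_r^{1/2}$, and splitting $|\widetilde{\lambda}_i-\lambda_j|\le \norm{\error}_2 + |\lambda_i-\lambda_j|$ by Weyl, the $\norm{\error}_2$ piece contributes $\norm{\error}_2/\lambda_r^{1/2}$, which after the factor $N^{-1/2}$ matches the first target term.

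The crux, and the step I expect to be the main obstacle, is showing that the off-diagonal contribution $|R_{ij}|\,|\lambda_i-\lambda_j|$ is absorbed into $(\lambda_r\norm{\error}_2)^{1/2}$, so that dividing by $\lambda_r^{1/2}$ produces the $\norm{\error}_2^{1/2}$ term: here the eigenvalue gap must tame the rotation. I would extract the commutator identity from $B\widetilde{V}_r=\widetilde{V}_r\widetilde{\Lambda}_r$ and $B=MM^\T+\error$, namely
\begin{align*}
(V_r^\T\widetilde{V}_r)_{ij}\,(\widetilde{\lambda}_j-\lambda_i) = (V_r^\T \error\,\widetilde{V}_r)_{ij},
\end{align*}
whose right side is at most $\norm{\error}_2$ because $V_r,\widetilde{V}_r$ have orthonormal columns; combined with Weyl this gives $|(V_r^\T\widetilde{V}_r)_{ij}|\,|\lambda_i-\lambda_j|\lesssim\norm{\error}_2$. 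Transferring from $V_r^\T\widetilde{V}_r$ to the Procrustes rotation $R$ through the standard estimate $\norm{R-V_r^\T\widetilde{V}_r}_2\lesssim\norm{\sin\Theta}_2^2\lesssim(\norm{\error}_2/\lambda_r)^2$, where the eigengap $\gtrsim\lambda_r$ follows from Condition~\ref{cond:eigenvalue}, and using $|\lambda_i-\lambda_j|\lesssim\lambda_r$ (again $\rho=O(1)$), the transfer error is only $\norm{\error}_2^2/\lambda_r\le\norm{\error}_2$. Hence $|R_{ij}|\,|\lambda_i-\lambda_j|\lesssim\norm{\error}_2$, and since the hypothesis $\norm{\error}_2\lesssim\lambda_r$ forces $\norm{\error}_2\le(\lambda_r\norm{\error}_2)^{1/2}$, the desired bound on $\norm{\widetilde{\Lambda}_r^{1/2}R - R\Lambda_r^{1/2}}_{\max}$ follows. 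Assembling the bounds for $T_1$ and $T_2$ then completes the proof.
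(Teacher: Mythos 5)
Your proposal is correct in substance and, at the crucial step, takes a genuinely different route from the paper. The top-level splitting into $T_1$ and $T_2$ is exactly the paper's decomposition into $\Gamma_1$ and $\Gamma_2$ (the paper inserts $RR^{-1}$ rather than writing the identity explicitly), and your treatment of $T_1$ coincides with the paper's. The divergence is in controlling $\norm{\widetilde{\Lambda}_r^{1/2}R-R\Lambda_r^{1/2}}_{\max}$. The paper partitions $\lambda_1,\ldots,\lambda_r$ into $\delta$-groups, invokes Bhatia's perturbation theorem (Theorem \ref{thm:EigspacePertBhatia}) to make $|R_{ij}|$ small across well-separated groups, and optimizes $\delta\propto(\lambda_1\lambda_r)^{1/4}\norm{\error}_2^{1/2}$; that optimization is precisely what produces the middle term $\norm{\error}_2^{1/2}/N^{1/2}$ in the stated bound. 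Your commutator identity $(\widetilde{\lambda}_j-\lambda_i)\langle v_i,\widetilde{v}_j\rangle=v_i^\T\error\,\widetilde{v}_j$, combined with Weyl, gives the uniform estimate $|\langle v_i,\widetilde{v}_j\rangle|\,|\lambda_i-\lambda_j|\le 2\norm{\error}_2$ for \emph{all} pairs, with no partition, no choice of $\delta$, and no appeal to Theorem \ref{thm:EigspacePertBhatia}. It is both cleaner and sharper: your bound on $T_2$ is $\norm{\error}_2/(\lambda_r N)^{1/2}$ plus the transfer error, so the middle term of the lemma is not even needed (your final bound is dominated by the first and third terms of the target, which of course implies the lemma).

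The one caveat is your parenthetical assumption that the rotation of Theorem \ref{thm:eigenvec_pert} ``can be taken to be'' the orthogonal Procrustes alignment. You need two properties of the \emph{same} $R$: the max-norm bound of Theorem \ref{thm:eigenvec_pert}, and the Procrustes estimate $\norm{R-\widetilde{V}_r^\T V_r}_2\le\norm{\sin\Theta}_2^2$. Theorem \ref{thm:eigenvec_pert} only asserts existence of some rotation, so this compatibility is unverified from the paper's statements (it is plausible, since the standard construction in this literature is the matrix sign of $\widetilde{V}_r^\T V_r$, which is the Procrustes rotation, but checking it requires opening the proof of the cited result of Fan et al.). The fix is immediate and stays inside your argument: replace the Procrustes transfer by the exact identity $R-\widetilde{V}_r^\T V_r=\widetilde{V}_r^\T(\widetilde{V}_rR-V_r)$, valid for any orthogonal $R$, whose entries are bounded by $N^{1/2}\norm{\widetilde{V}_rR-V_r}_{\max}\lesssim\norm{\error}_\infty/\lambda_r$; multiplying by $|\lambda_i-\lambda_j|\lesssim\lambda_r$ and by $\norm{V_r}_{\max}\lambda_r^{-1/2}\lesssim(\lambda_r N)^{-1/2}$, this contributes $\lesssim\norm{\error}_\infty/(\lambda_r N)^{1/2}$, which is the third term of the target. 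With that one substitution (which is, incidentally, exactly how the paper handles $|R_{ij}|$), your proof goes through verbatim for the very rotation supplied by Theorem \ref{thm:eigenvec_pert}.
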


\begin{proof}
	We upper bound the left hand side as
	\begin{align*}
	&\norm{\widetilde{V}_r\widetilde{\Lambda}^{1/2}_rR - V_r\Lambda^{1/2}_r}_{\max} \\
	&\qquad \leq \ \norm{\widetilde{V}_rRR^{-1}\widetilde{\Lambda}^{1/2}_rR - V_rR^{-1}\widetilde{\Lambda}^{1/2}_rR}_{\max}  + \norm{V_rR^{-1}\widetilde{\Lambda}^{1/2}_rR - V_r\Lambda^{1/2}_r}_{\max} \\
	&\qquad\leq \norm{\widetilde{V}_rR-V_r}_{\max}\norm{R^{-1}\widetilde{\Lambda}^{1/2}R}_{1} + \norm{V_r}_{\max}\norm{R^{-1}\widetilde{\Lambda}^{1/2}_rR-\Lambda^{1/2}_r}_1 \\
	&\qquad \leq r^{1/2}\norm{\widetilde{V}_rR-V_r}_{\max}\norm{R^{-1}\widetilde{\Lambda}^{1/2}_rR}_{2}  +  r^{1/2}\norm{V_r}_{\max}\norm{R^{-1}\widetilde{\Lambda}^{1/2}_rR-\Lambda^{1/2}_r}_2 \\
	&\qquad=(r\widetilde{\lambda}_1)^{1/2}\norm{\widetilde{V}_rR-V_r}_{\max}+r^{1/2}\norm{V_r}_{\max}\norm{\widetilde{\Lambda}^{1/2}_rR-R\Lambda^{1/2}_r}_2 \\
	&\qquad\leq  \underbrace{(r\widetilde{\lambda}_1)^{1/2}\norm{\widetilde{V}_rR-V_r}_{\max}}_{:=\Gamma_1}+\underbrace{r^{3/2}\norm{V_r}_{\max}\norm{\widetilde{\Lambda}^{1/2}_rR-R\Lambda^{1/2}_r}_{\max}}_{:=\Gamma_2}.
	\end{align*}	
	Since $r \leq k-1 = O(1)$ by Condition \ref{cond:balance}, we ignore constants depending on $r$. We first control $\Gamma_1$. Since $|\widetilde{\lambda}_i-\lambda_i|\leq \norm{\error}_2$ by Weyl's Inequality and  $\norm{\error}_2 \lesssim \lambda_r$ by assumption, we have
	\begin{align*}
	\widetilde{\lambda}_1^{1/2} 
	&=\lambda_1^{1/2} +\frac{\widetilde{\lambda}_1-\lambda_1}{\widetilde{\lambda}_1^{1/2}+\lambda_1^{1/2}}
	\leq \lambda_1^{1/2} + \frac{|\widetilde{\lambda}_1-\lambda_1|}{\lambda_1^{1/2}} \leq \lambda_1^{1/2} + \frac{\norm{\error}_2}{\lambda_1^{1/2}} \lesssim \lambda_1^{1/2}\, ,
	\end{align*}	
	so that
	\begin{align*}
	\Gamma_1 &\lesssim (\lambda_1)^{1/2}\norm{\widetilde{V}_rR-V_r}_{\max}\, .
	\end{align*}
	To bound $\Gamma_2$, we partition the eigenvalues $\lambda_1, \ldots, \lambda_r$ into $\delta$-groups so that if $\lambda_{i+1}-\lambda_i < \delta$, they are in the same $\delta$-group; if not, they are in different $\delta$-groups. We write $\lambda_i\equiv\lambda_j$ if $\lambda_i,\lambda_j$ are in the same $\delta$ group and $\lambda_i \not\equiv \lambda_j$ if they are not.   We note $(\widetilde{\Lambda}^{1/2}_rR-R\Lambda^{1/2}_r)_{ij} = (\strut \widetilde{\lambda}_i^{1/2}-\lambda_j^{1/2})R_{ij}$. We will show that when $\lambda_i,\lambda_j$ are in the same $\delta$ group, $\strut \widetilde{\lambda}_i^{1/2}-\lambda_j^{1/2}$ is small, and when they are in different $\delta$ groups, $|R_{ij}|$ is small, so that $(\widetilde{\Lambda}^{1/2}_rR-R\Lambda^{1/2}_r)_{ij}$ is always small. 
	First we relate $\widetilde{\lambda}_i^{1/2}$ to $\widetilde{\lambda}_i$ by
	\begin{align}
	\label{equ:eigval_relation}
	|\widetilde{\lambda}_i^{1/2}-\lambda_j^{1/2}| 
	&= \left| \frac{\widetilde{\lambda}_i - \lambda_i}{\widetilde{\lambda}_i^{1/2}+\lambda_i^{1/2}} + \frac{\lambda_i-\lambda_j}{\lambda_i^{1/2}+\lambda_j^{1/2}}\right| 
	\leq  \frac{\norm{\error}_2}{\lambda_r^{1/2}} + \frac{|\lambda_i-\lambda_j|}{\lambda_i^{1/2}+\lambda_j^{1/2}} \, .
	\end{align}
	Thus, when $\lambda_i,\lambda_j$ are in the same $\delta$ group, since $|R_{ij}|\leq 1$, we can bound
	\begin{align*}
	|(\widetilde{\Lambda}^{1/2}_rR-R\Lambda^{1/2}_r)_{ij}| &= |\strut \widetilde{\lambda}_i^{1/2}-\lambda_j^{1/2}| |R_{ij}| \\
	&\leq \frac{\norm{\error}_2}{\lambda_r^{1/2}} + \frac{r\delta}{\lambda_r^{1/2}}.
	\end{align*}
	To obtain a bound for $\lambda_i,\lambda_j$ in different $\delta$ groups, observe
	\begin{align*}
	R &= \widetilde{V}_r^{\T}V_r + \widetilde{V}_r^{\T}(\widetilde{V}_rR - V_r), \\
	|R_{ij}| &\leq |\langle v_i, \widetilde{v}_j \rangle | + \norm{\widetilde{V}_r^{\T}}_{\infty}\norm{\widetilde{V}_rR - V_r}_{\max} \\
	&\leq |\langle v_i, \widetilde{v}_j \rangle | +  N^{1/2}\norm{\widetilde{V}_r^{\T}}_{2}\norm{\widetilde{V}_rR - V_r}_{\max} \\
	&=|\langle v_i, \widetilde{v}_j \rangle | +  N^{1/2}\norm{\widetilde{V}_rR - V_r}_{\max}.
	\end{align*}
	When $i$ and $j$ correspond to different $\delta$-groups, $|\langle v_i, \widetilde{v}_j \rangle |  \leq\frac{\pi}{2}\norm{\error}_2/(\delta-2\norm{\error}_2)$ by {Theorem \ref{thm:EigspacePertBhatia} in Appendix \ref{app:EigPertResults}}.  Utilizing Equation \ref{equ:eigval_relation} and $\norm{\error}_2\lesssim \lambda_r$, we acquire
	\begin{align*}
	|(\widetilde{\Lambda}^{1/2}_rR-R\Lambda^{1/2}_r)_{ij}| 
	&=|\strut \widetilde{\lambda}_i^{1/2}-\lambda_j^{1/2}| |R_{ij}|\\
	&\leq \left(\frac{\norm{\error}_2}{\lambda_r^{1/2}} + \lambda_1^{1/2}\right)\left( \frac{\frac{\pi}{2}\norm{\error}_2}{\delta-2\norm{\error}_2} + N^{1/2}\norm{\widetilde{V}_rR - V_r}_{\max}\right) \\
	&\lesssim \lambda_1^{1/2} \left( \frac{\norm{\error}_2}{\delta-2\norm{\error}_2} + N^{1/2}\norm{\widetilde{V}_rR - V_r}_{\max}\right).
	\end{align*}
	We are free to choose $\delta$ to optimize our bounds, and we choose $\delta \propto (\lambda_1\lambda_r)^{1/4}\norm{\error}_2^{1/2}$. For this $\delta$, since $\norm{\error}_2 \lesssim \lambda_r$, we have $\delta-2\norm{\error}_2 \gtrsim \delta$, so that
	\begin{align*}
	|(\widetilde{\Lambda}^{1/2}_rR-R\Lambda^{1/2}_r)_{ij}| \lesssim \begin{cases} \norm{\error}_2/ \lambda_r^{1/2} + (\lambda_1/\lambda_r)^{1/4}\norm{\error}_2^{1/2} & (\lambda_i\equiv\lambda_j) \\[.2cm]
	(\lambda_1/\lambda_r)^{1/4}\norm{\error}_2^{1/2} + (\lambda_1 N)^{1/2}\norm{\widetilde{V}_rR - V_r}_{\max}  & (\lambda_i\not\equiv\lambda_j) \end{cases}.
	\end{align*}
	 Since $MM^\T$ is block constant, so are its eigenvectors on the blocks defined by the community labels, so that $\norm{v_j}_{\infty}\leq \nmin^{-1/2} = (\zeta N)^{-1/2}$ ($j=1,\ldots, s$), which gives $\norm{V_r}_{\max} \lesssim N^{-1/2}$. Since $\rho = \lambda_1/\lambda_r = O(1)$ by Condition \ref{cond:balance}, we obtain
	\begin{align*}
	\Gamma_2 &\lesssim \frac{\norm{\error}_2}{(\lambda_r N)^{1/2}} + \frac{\norm{\error}_2^{1/2}}{N^{1/2}}+\lambda_1^{1/2}\norm{\widetilde{V}_rR - V_r}_{\max}\, .
	\end{align*}
	Combining the bounds for $\Gamma_1$ and $\Gamma_2$, we thus obtain
	\begin{align*}
	\norm{\widetilde{V}_r\widetilde{\Lambda}^{1/2}_rR - V_r\Lambda^{1/2}_r}_{\max} &\lesssim \frac{\norm{\error}_2}{(\lambda_r N)^{1/2}} + \frac{\norm{\error}_2^{1/2}}{N^{1/2}}+\lambda_1^{1/2}\norm{\widetilde{V}_rR - V_r}_{\max} \, .
	\end{align*}
Since $\norm{\widetilde{V}_rR-V_r}_{\text{max}} = O\{\norm{\error}_{\infty}/( \lambda_r N^{1/2})\}$ by Theorem \ref{thm:eigenvec_pert}) and $\rho=O(1)$, we obtain
	\begin{align*}
	\norm{\widetilde{V}_r\widetilde{\Lambda}^{1/2}_rR - V_r\Lambda^{1/2}_r}_{\max} &\lesssim \frac{\norm{\error}_2}{(\lambda_r N)^{1/2}} + \frac{\norm{\error}_2^{1/2}}{N^{1/2}}+ \frac{\norm{\error}_{\infty}}{(\lambda_r N)^{1/2}}.
	\end{align*}
This completes the proof.
\end{proof}	

\begin{lem}
		\label{lem:embedding_pert_CommonCovCCP}
		Assume Conditions \ref{cond:balance}, \ref{cond:eigenvalue}, \ref{cond:CCP}, $\alpha = \big[\lambda_1/\{ \lambda_1+\mtr(\Sigma) \} \big]^{1/2}$, and $\norm{\error-\mtr(\Sigma)J}_2 \lesssim \lambda_r$. Then there exists a rotation matrix $R$ such that
		\begin{align*}
		\norm{\alpha\widetilde{V}_r\widetilde{\Lambda}^{1/2}_rR - V_r\Lambda^{1/2}_r}_{\max} &\lesssim \frac{\tau_{\rho}}{N^{1/2}} + \frac{ \norm{\error - \mtr(\Sigma)J}_2^{1/2}}{N^{1/2}}+ \frac{\norm{\error - \mtr(\Sigma)J}_2}{\lambda_r^{1/2}} \, ,
		\end{align*}
		where
		\[ \tau_\rho =\frac{\lambda_r^{1/2}(\rho-1)}{\big\{\lambda_1/\mtr(\Sigma)+1\big\} } \, . \] 
\end{lem}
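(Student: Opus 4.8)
The plan is to follow the two-term decomposition already used in Lemma \ref{lem:embedding_pert_general}, but with the reference matrix $MM^\T$ replaced by its expected counterpart and the infinity-norm eigenvector bound of Theorem \ref{thm:eigenvec_pert} replaced by a spectral-norm (Davis--Kahan) bound driven by the centered error matrix $\error - \mtr(\Sigma)J$. The first observation is that under Condition \ref{cond:CCP} one has $\Ex[(JX)(JX)^\T] = MM^\T + \mtr(\Sigma)J$, so that $(JX)(JX)^\T - (MM^\T + \mtr(\Sigma)J) = \error - \mtr(\Sigma)J$. Moreover, because the centering assumption $\sum_j n_j\mu_j = 0$ forces $M^\T 1 = 0$, the all-ones vector lies in the null space of $MM^\T$; hence $J$ acts as the identity on the span of the nonzero eigenvectors $v_i$ of $MM^\T$, and $MM^\T + \mtr(\Sigma)J$ shares these eigenvectors with eigenvalues $\lambda_i + \mtr(\Sigma)$. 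This is exactly what motivates the rescaling factor $\alpha$, chosen so that $\alpha(\lambda_1 + \mtr(\Sigma))^{1/2} = \lambda_1^{1/2}$.

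Writing $\widetilde{v}_j$ for the eigenvectors of $(JX)(JX)^\T$, I would first decompose $\alpha\widetilde{V}_r\widetilde{\Lambda}^{1/2}_rR - V_r\Lambda^{1/2}_r = \alpha(\widetilde{V}_rR - V_r)R^{-1}\widetilde{\Lambda}^{1/2}_rR + V_r(\alpha R^{-1}\widetilde{\Lambda}^{1/2}_rR - \Lambda^{1/2}_r)$ and bound the two pieces $\Gamma_1$ and $\Gamma_2$ as in Lemma \ref{lem:embedding_pert_general}. For $\Gamma_1 \lesssim \alpha\widetilde{\lambda}_1^{1/2}\norm{\widetilde{V}_rR - V_r}_{\max}$: Weyl's inequality applied to the reference $MM^\T + \mtr(\Sigma)J$ gives $|\widetilde{\lambda}_i - (\lambda_i + \mtr(\Sigma))| \leq \norm{\error - \mtr(\Sigma)J}_2 \lesssim \lambda_r$, which combined with the definition of $\alpha$ yields $\alpha\widetilde{\lambda}_1^{1/2} \lesssim \lambda_1^{1/2}$. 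The eigenvalue gap of $MM^\T + \mtr(\Sigma)J$ between its $r$th and $(r+1)$th eigenvalues is $\gtrsim \lambda_r$ under Condition \ref{cond:eigenvalue} (the common shift $\mtr(\Sigma)$ cancels when $r < s$, and the gap equals $\lambda_r$ when $r = s$), so a Davis--Kahan bound gives $\norm{\widetilde{V}_rR - V_r}_{\max} \leq \norm{\widetilde{V}_rR - V_r}_2 \lesssim \norm{\error - \mtr(\Sigma)J}_2/\lambda_r$; together with $\rho = O(1)$ this makes $\Gamma_1 \lesssim \norm{\error - \mtr(\Sigma)J}_2/\lambda_r^{1/2}$.

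The term $\Gamma_2 \lesssim \norm{V_r}_{\max}\norm{\alpha\widetilde{\Lambda}^{1/2}_rR - R\Lambda^{1/2}_r}_{\max}$ carries the new content. Reusing the $\delta$-group partition of $\lambda_1,\dots,\lambda_r$ from Lemma \ref{lem:embedding_pert_general}, I would control each entry $|\alpha\widetilde{\lambda}_i^{1/2} - \lambda_j^{1/2}|\,|R_{ij}|$ by splitting $|\alpha\widetilde{\lambda}_i^{1/2} - \lambda_j^{1/2}|$ into a fluctuation part $\alpha|\widetilde{\lambda}_i^{1/2} - (\lambda_i + \mtr(\Sigma))^{1/2}| \leq \norm{\error - \mtr(\Sigma)J}_2/\lambda_r^{1/2}$, a deterministic bias $|\alpha(\lambda_i + \mtr(\Sigma))^{1/2} - \lambda_i^{1/2}|$, and a separation part $|\lambda_i^{1/2} - \lambda_j^{1/2}|$. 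When $\lambda_i,\lambda_j$ lie in the same $\delta$-group the separation is $\lesssim r\delta/\lambda_r^{1/2}$; when they lie in different groups I bound $|R_{ij}| \leq |\langle v_i,\widetilde{v}_j\rangle| + N^{1/2}\norm{\widetilde{V}_rR - V_r}_{\max}$, where the inner product is small by Theorem \ref{thm:EigspacePertBhatia} applied to $MM^\T + \mtr(\Sigma)J$ versus $(JX)(JX)^\T$ (the perturbation being $\error - \mtr(\Sigma)J$ and the gaps preserved under the common shift), and $|\alpha\widetilde{\lambda}_i^{1/2} - \lambda_j^{1/2}| \lesssim \lambda_1^{1/2}$. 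Choosing $\delta \propto (\lambda_1\lambda_r)^{1/4}\norm{\error - \mtr(\Sigma)J}_2^{1/2}$ exactly as before, using $\norm{\error - \mtr(\Sigma)J}_2 \lesssim \lambda_r$, $\rho = O(1)$, and $\norm{V_r}_{\max} \lesssim N^{-1/2}$, I expect $\Gamma_2 \lesssim \tau_\rho/N^{1/2} + \norm{\error - \mtr(\Sigma)J}_2^{1/2}/N^{1/2} + \norm{\error - \mtr(\Sigma)J}_2/\lambda_r^{1/2}$; adding $\Gamma_1$ gives the claim.

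The hard part is isolating the deterministic bias and showing it equals \emph{exactly} $\tau_\rho$ rather than a cruder bound. Rationalizing shows that the bias $\alpha(\lambda_i+\mtr(\Sigma))^{1/2} - \lambda_i^{1/2}$ equals $\mtr(\Sigma)(\lambda_1 - \lambda_i)$ divided by a positive quantity bounded below by $(\lambda_1+\mtr(\Sigma))\lambda_r^{1/2}$; bounding $\lambda_1 - \lambda_i \leq \lambda_r(\rho-1)$ then produces precisely $\tau_\rho = \lambda_r^{1/2}(\rho-1)/\{\lambda_1/\mtr(\Sigma) + 1\}$. This makes transparent why $\alpha$ is chosen as it is: the bias vanishes when the top eigenvalues coincide ($\rho = 1$), and for general $\rho$ it is damped by the noise-inflation ratio $\mtr(\Sigma)/(\lambda_1 + \mtr(\Sigma))$, which is why Condition \ref{cond:equal_eigs} is later invoked in Theorem \ref{thm:MainTheorem_HighD} solely to absorb $\tau_\rho$ into the signal-to-noise requirement. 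Everything else parallels Lemma \ref{lem:embedding_pert_general} with $\error$ replaced by $\error - \mtr(\Sigma)J$.
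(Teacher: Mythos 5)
Your proposal is correct and follows essentially the same route as the paper's proof: the same $\Gamma_1/\Gamma_2$ decomposition, the same auxiliary reference matrix $MM^\T + \mtr(\Sigma)J$ sharing eigenvectors with $MM^\T$ (with eigenvalues shifted by $\mtr(\Sigma)$), the same $\delta$-group partition with Theorem \ref{thm:EigspacePertBhatia} controlling the cross terms $|R_{ij}|$, the same choice $\delta \propto (\lambda_1\lambda_r)^{1/4}\norm{\error-\mtr(\Sigma)J}_2^{1/2}$, and the same Davis--Kahan bound on $\norm{\widetilde{V}_rR - V_r}_{\max}$ driven by the centered perturbation $\error - \mtr(\Sigma)J$. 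Your three-way split of $|\alpha\widetilde{\lambda}_i^{1/2}-\lambda_j^{1/2}|$ into fluctuation, bias, and separation is only a cosmetic reorganization of the paper's two-way split through $|\alpha^2\widetilde{\lambda}_i-\lambda_i|$, and your rationalization of the bias term recovers exactly the paper's $\tau_\rho$.
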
	

\begin{proof}
	A proof identical to that in Lemma \ref{lem:embedding_pert_general}, but with $\alpha \widetilde{\Lambda}_r^{1/2}$ replacing $\widetilde{\Lambda}_r^{1/2}$, gives
	\begin{align*}
	\norm{\alpha\widetilde{V}_r\widetilde{\Lambda}^{1/2}_rR - V_r\Lambda^{1/2}_r}_{\max}
	&\lesssim \underbrace{\alpha (\widetilde{\lambda}_1)^{1/2}\norm{\widetilde{V}_rR-V_r}_{\max}}_{:=\Gamma_1}
	+\underbrace{\norm{V_r}_{\max}\norm{\alpha\widetilde{\Lambda}^{1/2}_rR-R\Lambda^{1/2}_r}_{\max}}_{:=\Gamma_2}.
	\end{align*}
	
	
	To bound $\Gamma_1$ and $\Gamma_2$, we define the following quantities which are relevant for analyzing the perturbation $MM^\T \rightarrow (JX)(JX)^{\T}$: 
	\begin{align*}
	A &= MM^{\T}; \\
	\hat{A} &= MM^{\T} + \mtr(\Sigma)J; \\
	\widetilde{A} &= (JX)(JX)^{\T};\\
	\error &= \widetilde{A} - A; \\
	\error-\mtr(\Sigma)J &=  \widetilde{A} - \hat{A}. 
	\end{align*}
	The matrix $\widetilde{A} =A+\error$ is a perturbation of $A$; the quantity $\hat{A}$ will also be convenient in the proof, as we decompose the perturbation $A \rightarrow \widetilde{A}$ into two pieces: $A \rightarrow \hat{A}$ and $\hat{A} \rightarrow \widetilde{A}$.  We let $\{\hat{\lambda}_i\}_{i=1}^N$ denote the eigenvalues of $\hat{A}$. 
	
	We first relate the eigendecompositions of $A$ and $\hat{A}$. The $1$ vector is an eigenvector of $MM^{\T}$ with corresponding eigenvalue $\lambda=0$. Although $\lambda=0$ has multiplicity greater than 1, we choose a basis for its eigenspace so that $v_N=1$ and the remaining eigenvectors $v_1, \ldots, v_{N-1}$ are orthogonal to $1$. The matrix $J$ is a projection matrix which eliminates any component in the direction of $1$ but has no affect on any other direction. Thus for $i>N$
	\begin{align*}
	\hat{A}v_i = \{ MM^{\T}+\mtr(\Sigma)J \}v_i &= \lambda_iv_i+\mtr(\Sigma)v_i =  \{  \lambda_i+\mtr(\Sigma) \}v_i,
	\end{align*}
	and also
	\begin{align*}
	\hat{A}v_N&= \{ MM^{\T}+\mtr(\Sigma)J \}1= 0 = \lambda_Nv_N.
	\end{align*}
	Thus all eigenvectors of $A$ are still eigenvectors of $\hat{A}$, that is, we may choose an eigenbasis $\{\hat{v}_i\}_{i=1}^N$ for $\hat{A}$ such that $\hat{v}_i=v_i$ ($i=1,\ldots,N$) with $\hat{\lambda}_i = \lambda_i+\mtr(\Sigma)$ ($i=1,\ldots,N-1$) and $\hat{\lambda}_N=\lambda_N=0$. We let $\hat{V}_r =(\hat{v}_1,\ldots,\hat{v}_r)$ so that $V_r = \hat{V}_r$. 
	Furthermore, by Weyl's Inequality, $|\widetilde{\lambda}_i - \lambda_i| \leq \norm{\error}_2$ and $|\widetilde{\lambda}_i - \hat{\lambda}_i| \leq \norm{\error-\mtr(\Sigma)J}_2$ for all $i$.
	
	To obtain a bound for $\alpha \widetilde{\lambda}_1^{1/2}\norm{\widetilde{V}_rR-V_r}_{\max}$, note
	\begin{align*}
	\alpha\widetilde{\lambda}_1^{1/2} 
	&=\frac{\alpha^2\widetilde{\lambda}_1-\lambda_1}{\alpha\widetilde{\lambda}_1^{1/2}+\lambda_1^{1/2}}+\lambda_1^{1/2} 
	\leq \frac{|\alpha^2\widetilde{\lambda}_1-\lambda_1|}{\lambda_1^{1/2}}+\lambda_1^{1/2}.
	\end{align*}
	Utilizing $\alpha = \big[\lambda_1/\{ \lambda_1+\mtr(\Sigma) \} \big]^{1/2}$ and $|\widetilde{\lambda}_i - \hat{\lambda}_i| = |\widetilde{\lambda}_i - \lambda_i-\mtr(\Sigma)| \leq \norm{\error-\mtr(\Sigma)J}_2$, we have
	\begin{align*}
	\alpha\widetilde{\lambda}_1^{1/2} 
	&\leq \frac{\left| \lambda_1 \{ \lambda_1+\mtr(\Sigma)\pm\norm{\error-\mtr(\Sigma)J}_2 \}/\{ \lambda_1+\mtr(\Sigma)\} -\lambda_1\right|}{\lambda_1^{1/2}}+\lambda_1^{1/2} \\
	&= \frac{\left| \lambda_1\norm{\error-\mtr(\Sigma)J}_2 / \{ \lambda_1+\mtr(\Sigma)\}\right|}{\lambda_1^{1/2}}+\lambda_1^{1/2} \\ 
	&=\frac{\norm{\error-\mtr(\Sigma)J}_2}{\lambda_1^{1/2}}\left\{ 1+\frac{\mtr(\Sigma)}{\lambda_1}  \right\}^{-1}+\lambda_1^{1/2} \\
	&\leq \frac{\norm{\error-\mtr(\Sigma)J}_2}{\lambda_1^{1/2}}+\lambda_1^{1/2} \\
	&\lesssim \lambda_1^{1/2} \, ,
	\end{align*}
	since $\norm{\error-\mtr(\Sigma)J}_2 \lesssim \lambda_r \leq \lambda_1$. We thus obtain
	\begin{align*}
	\Gamma_1 = \alpha \widetilde{\lambda}_1^{1/2}\norm{\widetilde{V}_rR-V_r}_{\max} &\lesssim \lambda_1^{1/2}\norm{\widetilde{V}_rR-V_r}_{\max}\, .
	\end{align*}
	
	To bound $\Gamma_2$, we must control $\norm{\alpha\widetilde{\Lambda}^{1/2}_rR-R\Lambda^{1/2}_r}_{\max}$; note $(\alpha\widetilde{\Lambda}^{1/2}_rR-R\Lambda^{1/2}_r)_{ij} = (\alpha\strut \widetilde{\lambda}_i^{1/2}-\lambda_j^{1/2})R_{ij}$. As in the proof of Lemma \ref{lem:embedding_pert_general}, we partition the eigenvalues $\lambda_1, \ldots, \lambda_r$ into $\delta$-groups so that if $\lambda_{i+1}-\lambda_i < \delta$, they are in the same $\delta$-group, and if not, they are in different $\delta$-groups. We will show that when $\lambda_i,\lambda_j$ are in the same $\delta$ group, $\alpha\strut \widetilde{\lambda}_i^{1/2}-\lambda_j^{1/2}$ is small, and when they are in different $\delta$ groups, $|R_{ij}|$ is small, so that $(\alpha\widetilde{\Lambda}^{1/2}_rR-R\Lambda^{1/2}_r)_{ij}$ is always small. 
	First we relate $\widetilde{\lambda}_i^{1/2}$ to $\widetilde{\lambda}_i$ by
	\begin{align*}
	|\alpha\widetilde{\lambda}_i^{1/2}-\lambda_j^{1/2}| 
	&= \left| \frac{\alpha^2\widetilde{\lambda}_i - \lambda_i}{\alpha\widetilde{\lambda}_i^{1/2}+\lambda_i^{1/2}} + \frac{\lambda_i-\lambda_j}{\lambda_i^{1/2}+\lambda_j^{1/2}}\right| 
	\leq  \frac{|\alpha^2\widetilde{\lambda}_i - \lambda_i|}{\lambda_r^{1/2}} + \frac{|\lambda_i-\lambda_j|}{\lambda_r^{1/2}}.
	\end{align*}
	Utilizing $\alpha = \big[\lambda_1/\{ \lambda_1+\mtr(\Sigma) \} \big]^{1/2}$, we bound $|\alpha^2\widetilde{\lambda}_i - \lambda_i|$ by
	\begin{align*}
	|\alpha^2\widetilde{\lambda}_i - \lambda_i| &= \left| \left\{ \frac{\lambda_1}{\lambda_1+\mtr(\Sigma)}\right\}\widetilde{\lambda}_i-\lambda_i \right| \\
	&= \left| \left\{ \frac{\lambda_1}{\lambda_1+\mtr(\Sigma)}\right\}\{\lambda_i+\mtr(\Sigma)\pm\norm{\error-\mtr(\Sigma)J}_2\}-\lambda_i \right| \\
	&= \left| \frac{\mtr(\Sigma)(\lambda_1-\lambda_i) \pm \lambda_1 \norm{\error-\mtr(\Sigma)J}_2}{\lambda_1+\mtr(\Sigma)} \right| \\
	&\leq \frac{|\lambda_1 -\lambda_r|}{\big\{ \lambda_1/ \mtr(\Sigma)+1\big\}} + \norm{\error-\mtr(\Sigma)J}_2.
	\end{align*}
	Since  
	\[ \tau_\rho = \frac{\lambda_r^{1/2}(\rho-1)}{\big\{\lambda_1/\mtr(\Sigma)+1\big\} } = \frac{|\lambda_1 -\lambda_r|}{\lambda_r^{1/2} \big\{ \lambda_1/\mtr(\Sigma)+1\big\}}\, ,\] 
	we obtain
	\begin{align*}
	|\alpha\widetilde{\lambda}_i^{1/2}-\lambda_j^{1/2}| &\leq \frac{\lambda_r^{1/2}(\rho-1)}{\left\{\frac{\lambda_1}{\mtr(\Sigma)}+1\right\}} + \frac{\norm{\error-\mtr(\Sigma)J}_2}{\lambda_r^{1/2}} + \frac{|\lambda_i-\lambda_j|}{\lambda_r^{1/2}} \\
	&= \tau_\rho + \frac{\norm{\error-\mtr(\Sigma)J}_2}{\lambda_r^{1/2}} + \frac{|\lambda_i-\lambda_j|}{\lambda_r^{1/2}}.
	\end{align*}
	Thus, when $\lambda_i,\lambda_j$ are in the same $\delta$ group, since $|R_{ij}|\leq 1$, we can bound
	\begin{align*}
	|(\alpha\widetilde{\Lambda}^{1/2}_rR-R\Lambda^{1/2}_r)_{ij}| &= |\alpha\strut \widetilde{\lambda}_i^{1/2}-\lambda_j^{1/2}|(|R_{ij}|) \\
	&\leq  \tau_\rho + \frac{\norm{\error-\mtr(\Sigma)J}_2}{\lambda_r^{1/2}} + \frac{r\delta}{\lambda_r^{1/2}}.
	\end{align*}
	To obtain a bound for $\lambda_i,\lambda_j$ in different $\delta$ groups, recall from the proof of Lemma \ref{lem:embedding_pert_general} that
	\begin{align*}
	|R_{ij}| &\leq |\langle v_i, \widetilde{v}_j \rangle | +  N^{1/2}\norm{\widetilde{V}_rR - V_r}_{\max}.
	\end{align*}
	When $i$ and $j$ correspond to different $\delta$-groups, $|\langle v_i, \widetilde{v}_j \rangle | = |\langle \hat{v}_i, \widetilde{v}_j \rangle | \leq\frac{\pi}{2}\norm{\error-\mtr(\Sigma)J}_2/(\delta-2\norm{\error-\mtr(\Sigma)J}_2)$ {by Theorem \ref{thm:EigspacePertBhatia} in Appendix \ref{app:EigPertResults}}, and utilizing $\alpha \widetilde{\lambda}_1^{1/2} \lesssim \lambda_1^{1/2}$ as shown when bounding $\Gamma_1$, we obtain
	\begin{align*}
	|(\alpha\widetilde{\Lambda}^{1/2}_rR-R\Lambda^{1/2}_r)_{ij}| &\leq \left(|\alpha\widetilde{\lambda}_i^{1/2}| + |\lambda_j^{1/2}|\right)\left(\frac{\frac{\pi}{2}\norm{\error-\mtr(\Sigma)J}_2}{\delta-2\norm{\error-\mtr(\Sigma)J}_2} +N^{1/2}\norm{\widetilde{V}_rR - V_r}_{\max} \right) \\
	&\lesssim \lambda_1^{1/2}\left(\frac{\norm{\error-\mtr(\Sigma)J}_2}{\delta-2\norm{\error-\mtr(\Sigma)J}_2} +N^{1/2}\norm{\widetilde{V}_rR - V_r}_{\max} \right).
	\end{align*}  
	We are free to choose $\delta$ to optimize our bounds, and we choose $\delta = (\lambda_1\lambda_r)^{1/4}\norm{\error-\mtr(\Sigma)J}_2^{1/2}$. For this $\delta$, since $\norm{\error - \mtr(\Sigma)J}_2 \lesssim \lambda_r$, we have $\delta-2\norm{\error-\mtr(\Sigma)J}_2 \gtrsim \delta$, so that
	\begin{small}
	\begin{align*}
	|(\alpha\widetilde{\Lambda}^{1/2}_rR-R\Lambda^{1/2}_r)_{ij}| \lesssim \begin{cases} \tau_\rho+ \norm{\error-\mtr(\Sigma)J}_2/ \lambda_r^{1/2} + (\lambda_1/\lambda_r)^{1/4}\norm{\error-\mtr(\Sigma)J}_2^{1/2} & (\lambda_i\equiv\lambda_j) \\[.2cm]
	(\lambda_1/\lambda_r)^{1/4}\norm{\error-\mtr(\Sigma)J}_2^{1/2} + (\lambda_1 N)^{1/2}\norm{\widetilde{V}_rR - V_r}_{\max}  & (\lambda_i\not\equiv\lambda_j) \end{cases},
	\end{align*}
	\end{small}
	where $\lambda_i\equiv\lambda_j$ if $\lambda_i,\lambda_j$ are in the same $\delta$ group and $\lambda_i \not\equiv \lambda_j$ if they are not.
	
	Since as in the proof of Lemma \ref{lem:embedding_pert_general}, $\norm{V_r}_{\max} \lesssim N^{-1/2}$ and $\rho = O(1)$, we have
	\begin{align*}
	\Gamma_2 &\lesssim \frac{\tau_{\rho}}{N^{1/2}} + \frac{\norm{\error - \mtr(\Sigma)J}_2}{(\lambda_r N)^{1/2}} + \frac{ \norm{\error - \mtr(\Sigma)J}_2^{1/2}}{N^{1/2}}+ \lambda_1^{1/2} \norm{\widetilde{V}_rR - V_r}_{\max} 
	\end{align*}
	Combining our bounds for $\Gamma_1$ and $\Gamma_2$ thus gives
	\begin{align}
	&\norm{\alpha\widetilde{V}_r\widetilde{\Lambda}^{1/2}_rR - V_r\Lambda^{1/2}_r}_{\max} \nonumber \\
	&\qquad \lesssim \frac{\tau_{\rho}}{N^{1/2}} + \frac{\norm{\error - \mtr(\Sigma)J}_2}{(\lambda_r N)^{1/2}} + \frac{ \norm{\error - \mtr(\Sigma)J}_2^{1/2}}{N^{1/2}}+ \lambda_1^{1/2} \norm{\widetilde{V}_rR - V_r}_{\max} \, .
	\label{equ:embedding_bound_Vmax2}
	\end{align}
	To bound $\norm{\widetilde{V}_rR - V_r}_{\max}$, we note
	\begin{align*}
	\norm{\widetilde{V}_rR-V_r}_{\max} &= \norm{\widetilde{V}_rR-\hat{V}_r}_{\max} \leq \norm{\widetilde{V}_rR-\hat{V}_r}_\textnormal{F} 
	\end{align*} and by Theorem \ref{thm:DavisKahan}  {in Appendix \ref{app:EigPertResults}}  there exists an orthogonal matrix $R$ such that
	\begin{align*}
	\norm{\widetilde{V}_rR-\hat{V}_r}_\textnormal{F} &\leq \frac{2^{3/2}r^{1/2}\norm{\widetilde{A}-\hat{A}}_2}{ \hat{\lambda}_r-\hat{\lambda}_{r+1}} \\
	&= \frac{2^{3/2}r^{1/2}\norm{\widetilde{A}-\hat{A}}_2}{ \lambda_r+\mtr(\Sigma)-\{\lambda_{r+1}+\mtr(\Sigma)\}} \\
	&= \frac{2^{3/2}r^{1/2}\norm{\error-\mtr(\Sigma)J}_2}{ \lambda_r-\lambda_{r+1}} \\
	&= \frac{2^{5/2}r^{1/2}\norm{\error-\mtr(\Sigma)J}_2}{ \lambda_r}
	\end{align*}
	where the next to last line follows since $\lambda_r-\lambda_{r+1} \geq \lambda_r/2$. Utilizing the above bound in Inequality \ref{equ:embedding_bound_Vmax2} and $\rho=O(1)$, we obtain
	\begin{align*}
	\norm{\alpha\widetilde{V}_r\widetilde{\Lambda}^{1/2}_rR - V_r\Lambda^{1/2}_r}_{\max} &\lesssim \frac{\tau_{\rho}}{N^{1/2}} + \frac{ \norm{\error - \mtr(\Sigma)J}_2^{1/2}}{N^{1/2}}+ \frac{\norm{\error - \mtr(\Sigma)J}_2}{\lambda_r^{1/2}}
	\end{align*}
	and the lemma is proved.

\end{proof}

\section{Relating Perfect Geometric Representation with Embedding Perturbation}
\label{app:RelateEmbeddingAndRecovery}

\begin{lem}
	\label{lem:RelateER_EmbeddingPert}
	Assume Condition \ref{cond:eigenvalue} and the model in Section \ref{sec:ModelFormulation}. If there exists a constant $\alpha>0$ and rotation matrix $R$ such that
	\begin{align}
	\label{equ:exact_recovery_cond2}
	\mudiff > 12 \sqrt{r}\norm{V_r\Lambda^{1/2}_r - \alpha\widetilde{V}_r\widetilde{\Lambda}^{1/2}_rR}_{\max}\, ,
	\end{align}
	then the rank $r$ CMDS embedding $\widetilde{V}_r\widetilde{\Lambda}^{1/2}_r$ is a perfect geometric representation of the labels.
\end{lem}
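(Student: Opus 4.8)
The plan is to exploit the rotation- and scale-invariance of the perfect geometric representation property and then compare the empirical embedding against the noiseless embedding $Z := V_r\Lambda_r^{1/2}$. Writing $W := \alpha\widetilde{V}_r\widetilde{\Lambda}_r^{1/2}R$, observe that since $R$ is orthogonal and $\alpha>0$, every pairwise distance satisfies $\norm{W_{i\cdot}-W_{j\cdot}}_2 = \alpha\norm{(\widetilde{V}_r\widetilde{\Lambda}_r^{1/2})_{i\cdot}-(\widetilde{V}_r\widetilde{\Lambda}_r^{1/2})_{j\cdot}}_2$. Hence $d_{\text{in}}(W,\ell)=\alpha\,d_{\text{in}}(\widetilde{V}_r\widetilde{\Lambda}_r^{1/2},\ell)$ and likewise for $d_{\text{btw}}$, so $d_{\text{btw}}(W,\ell)>2d_{\text{in}}(W,\ell)$ holds if and only if the same inequality holds for $\widetilde{V}_r\widetilde{\Lambda}_r^{1/2}$. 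It therefore suffices to verify the condition for $W$. The first routine step converts the hypothesis into a row-wise Euclidean bound: for every $i$, $\norm{W_{i\cdot}-Z_{i\cdot}}_2 \leq \sqrt{r}\,\norm{W-Z}_{\max} < \mudiff/12$.

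Next I would bound the within- and between-cluster distances of $W$ separately, using $Z$ as an anchor. For the within-cluster distances the key fact is that $Z$ is \emph{exactly} block-constant: since $MM^\T$ is block-constant on the label blocks, its eigenvectors $v_t$ are constant on each block (this is precisely the opening argument in the proof of Lemma \ref{lem:eigvec_delocaliztion}), and hence $Z_{i\cdot}=Z_{j\cdot}$ whenever $\ell_i=\ell_j$. Thus for $\ell_i=\ell_j$ the triangle inequality gives $\norm{W_{i\cdot}-W_{j\cdot}}_2 \leq \norm{W_{i\cdot}-Z_{i\cdot}}_2+\norm{W_{j\cdot}-Z_{j\cdot}}_2 < \mudiff/6$, so $d_{\text{in}}(W,\ell)<\mudiff/6$. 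For the between-cluster distances the same triangle inequality reduces matters to a lower bound on $d_{\text{btw}}(Z,\ell)$, since $\norm{W_{i\cdot}-W_{j\cdot}}_2 \geq \norm{Z_{i\cdot}-Z_{j\cdot}}_2 - \mudiff/6$ for $\ell_i\neq\ell_j$.

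The crux is the lower bound $d_{\text{btw}}(Z,\ell)\geq \tfrac56\mudiff$, which I expect to be the main obstacle because truncating to the top $r$ eigencomponents can in principle shrink between-cluster distances. I would start from the identity $\norm{\mu_{\ell_i}-\mu_{\ell_j}}_2^2 = \sum_{t=1}^{s}\lambda_t\,(v_t(i)-v_t(j))^2$, valid because the full-rank embedding $V_s\Lambda_s^{1/2}$ preserves all pairwise distances of the rows of $M$. Consequently $\norm{Z_{i\cdot}-Z_{j\cdot}}_2^2 = \norm{\mu_{\ell_i}-\mu_{\ell_j}}_2^2 - \sum_{t=r+1}^{s}\lambda_t\,(v_t(i)-v_t(j))^2$, and the discarded tail is controlled using the delocalization bound $|v_t(i)|\leq \nmin^{-1/2}$ from Lemma \ref{lem:eigvec_delocaliztion}, which yields $(v_t(i)-v_t(j))^2\leq 4\nmin^{-1}$, together with $\lambda_{r+1}\leq \mudiff^2\nmin/\{144(s-r)\}$ from Condition \ref{cond:eigenvalue}. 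This gives a tail of at most $\lambda_{r+1}(s-r)\cdot 4\nmin^{-1}\leq \mudiff^2/36$, so that $\norm{Z_{i\cdot}-Z_{j\cdot}}_2^2 \geq \mudiff^2-\mudiff^2/36 \geq (5\mudiff/6)^2$ for any $\ell_i\neq\ell_j$.

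Finally I would assemble the pieces: $d_{\text{btw}}(W,\ell) \geq d_{\text{btw}}(Z,\ell)-\mudiff/6 \geq \tfrac56\mudiff-\tfrac16\mudiff = \tfrac23\mudiff$, while $2d_{\text{in}}(W,\ell) < \mudiff/3$. Since $\tfrac23\mudiff>\tfrac13\mudiff$, we conclude $d_{\text{btw}}(W,\ell)>2d_{\text{in}}(W,\ell)$, i.e. $W$ is a perfect geometric representation, and by the invariance noted at the outset so is $\widetilde{V}_r\widetilde{\Lambda}_r^{1/2}$. Beyond the tail estimate, the only delicate points are confirming that $Z$ is genuinely block-constant and that the full-rank embedding is an isometry on the rows of $M$; both follow directly from the block structure of $MM^\T$.
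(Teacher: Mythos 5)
Your proof is correct and follows essentially the same route as the paper's: both compare the noisy embedding to an ideal one via triangle inequalities on the within/between-cluster distances, invoke the block-constancy and delocalization of the eigenvectors of $MM^\T$ (Lemma \ref{lem:eigvec_delocaliztion}), exploit the isometry of the full-rank embedding of $M$, and use the $\mudiff^2\nmin/\{144(s-r)\}$ branch of Condition \ref{cond:eigenvalue} to control the discarded eigencomponents. The only differences are organizational: the paper anchors at the full-rank embedding $V_s\Lambda_s^{1/2}$ and bounds the truncation error per row, whereas you anchor at $V_r\Lambda_r^{1/2}$ and absorb the truncation into a Pythagorean (squared-distance) lower bound on $d_{\text{btw}}$ --- the constants close either way.
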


\begin{proof}
Recall $s=\text{rank}(MM^\T)$ and $r \leq s$ by Condition \ref{cond:eigenvalue}; because $M$ is centered, $1 \leq s \leq k-1$. Let $V_i$ denote the $N \times i$ matrix whose columns are the top $i$ eigenvectors of $MM^\T$, and $\Lambda_i$ ($i=1,\ldots, s$) the $i \times i$ diagonal matrix of associated eigenvalues. Let $\widetilde{V}_i, \widetilde{\Lambda}_i$ denote the same quantities for the multidimensional scaling matrix $B$. Then the rows of $V_s\Lambda^{1/2}_s$ are the coordinates of the means $\{\mu_{\ell_i}^\T\}_{i=1}^N$ obtained from a full rank CMDS embedding of $M$, and the rows of $\widetilde{V}_r\widetilde{\Lambda}^{1/2}_r$ are the coordinates of the points $\{x_i^\T\}_{i=1}^N$ obtained from a rank $r$ CMDS embedding of $X$. We compare the clustering obtained from $V_s\Lambda^{1/2}_s$, which is perfect, to that obtained from $\widetilde{V}_r\widetilde{\Lambda}^{1/2}_r$, which is noisy. 
	Since the output of a clustering algorithm remains the same under a fixed scaling and rotation of the data, 
	$V_s\Lambda^{1/2}_s$ can in fact be compared with $\alpha\widetilde{V}_r\widetilde{\Lambda}^{1/2}_rR$, where $\alpha$ is any constant and $R$ is any $r\times r$ rotation matrix. Note $(V_s\Lambda^{1/2}_s)_{i\cdot} \in \mathbb{R}^s$ and $(\alpha V_r\Lambda^{1/2}_rR)_{i\cdot}\in \mathbb{R}^r$, but the coordinates $(\alpha V_r\Lambda^{1/2}_rR)_{i\cdot}$ are simply extended to coordinates in $\mathbb{R}^s$ by adding zeros in the remaining $s-r$ dimensions. Let $\{\phi_i^\T\}_{i=1}^N$ denote the rows of $V_s\Lambda^{1/2}_s$ and $\{\widetilde{\phi}_i^\T\}_{i=1}^N$ denote the rows of $\alpha V_r\Lambda^{1/2}_rR$, augmented with zeros in $s-r$ dimensions. 
	
	Because $M$ has rank $s$ and zero mean, the pairwise distances are perfectly preserved by the CMDS embedding, and $\norm{\mu_{\ell_i}^\T-\mu_{\ell_j}^\T}_2 = \norm{\phi_i^\T-\phi_j^\T}_2$. Let
	\begin{align*}
	d_{\text{in}}=d_{\text{in}}(\alpha\widetilde{V}_r\widetilde{\Lambda}_r^{1/2}R, \ell) &= \max_{i,j, \ell_i=\ell_j} \norm{\widetilde{\phi}_i^\T-\widetilde{\phi}_j^\T}_2, \\
	d_{\text{btw}}=d_{\text{btw}}(\alpha\widetilde{V}_r\widetilde{\Lambda}_r^{1/2}R, \ell) &= \min_{i,j, \ell_i\ne\ell_j} \norm{\widetilde{\phi}_i^\T-\widetilde{\phi}_j^\T}_2
	\end{align*}
	be the maximal within and minimal between community distances respectively. Recall that to obtain a perfect geometric representation of the data, and thus exactly recover the true labels via a simple clustering algorithm, we must have $2d_{\text{in}} < d_{\text{btw}}$. 
	
	Since
	\begin{align*}
	\norm{\widetilde{\phi}_i^\T-\widetilde{\phi}_j^\T}_2 &\leq \norm{\widetilde{\phi}_i^\T-\phi_i^\T}_2+\norm{\widetilde{\phi}_j^\T-\phi_j^\T}_2+\norm{\phi_i^\T-\phi_j^\T}_2 \\
	&= \norm{\widetilde{\phi}_i^\T-\phi_i^\T}_2+\norm{\widetilde{\phi}_j^\T-\phi_j^\T}_2+\norm{\mu_{\ell_i}^\T-\mu_{\ell_j}^\T}_2,
	\end{align*}
	we have
	\begin{align*}
	d_{\text{in}} &\leq 2 \max_i \norm{\widetilde{\phi}_i^\T-\phi_i^\T}_2.
	\end{align*}
	By a similar argument, since 
	\begin{align*}
	\norm{\phi_i^\T-\phi_j^\T}_2 &\leq \norm{\widetilde{\phi}_i^\T-\phi_i^\T}_2+\norm{\widetilde{\phi}_j^\T-\phi_j^\T}_2+\norm{\widetilde{\phi}_i^\T-\widetilde{\phi}_j^\T}_2 \\
	\norm{\widetilde{\phi}_i^\T-\widetilde{\phi}_j^\T}_2 &\geq \norm{\mu_{\ell_i}^\T-\mu_{\ell_j}^\T}_2- \norm{\widetilde{\phi}_i^\T-\phi_i^\T}_2-\norm{\widetilde{\phi}_j^\T-\phi_j^\T}_2,
	\end{align*}
	we have
	\begin{align*}
	d_{\text{btw}} &\geq \mudiff - 2 \max_i \norm{\widetilde{\phi}_i^\T-\phi_i^\T}_2.
	\end{align*}
	Thus, $2d_{\text{in}} < d_{\text{btw}}$ is guaranteed whenever
	\begin{align}
	\label{equ:exact_recovery_cond1}
	\mu_{\text{diff}} &> 6 \max_i \norm{\widetilde{\phi}_i^\T-\phi_i^\T}_2.
	\end{align}
	With a slight abuse of notation we have
	\begin{align*}
	\norm{\widetilde{\phi}_i^\T-\phi_i^\T}_2 &=  \norm{(V_s\Lambda^{1/2}_s)_{i\cdot} - \alpha(\widetilde{V}_r\widetilde{\Lambda}^{1/2}_rR)_{i\cdot}}_2 \\
	&\leq \norm{(V_s\Lambda^{1/2}_s)_{i\cdot} - (V_r\Lambda^{1/2}_r)_{i\cdot} }_{2} + 
	\norm{(V_r\Lambda^{1/2}_r)_{i\cdot} - \alpha(\widetilde{V}_r\widetilde{\Lambda}^{1/2}_rR)_{i\cdot}}_{2}, \\
	&\leq \norm{(V_s\Lambda^{1/2}_s)_{i\cdot} - (V_r\Lambda^{1/2}_r)_{i\cdot} }_{2} + 
	\sqrt{r}\norm{V_r\Lambda^{1/2}_r - \alpha\widetilde{V}_r\widetilde{\Lambda}^{1/2}_rR}_{\max},
	\end{align*}
	where we assume $r$-dimensional rows have been augmented with zeros to become $s$-dimensional rows when needed. 
	The first term represents the error incurred from discarding the small eigenvalues and eigenvectors of $MM^\T$, but this error is easily controlled when $\lambda_{r+1}$ is small. Since $MM^\T$ is block constant, so are its eigenvectors on the blocks defined by the community labels, so that $\norm{v_j}_{\infty}\leq \nmin^{-1/2}$ ($j=1,\ldots, s$). Since the first $r$ dimensions match exactly, we thus have for all $i$
	\begin{align*}
	\norm{(V_s\Lambda^{1/2}_s)_{i\cdot} - (V_r\Lambda^{1/2}_r)_{i\cdot} }_{2} &\leq \norm{[0, \ldots, 0, \lambda_{r+1}^{1/2}v_{r+1}(i), \ldots, \lambda_s^{1/2}v_s(i)]}_{2} \leq \frac{(s-r)^{1/2}\lambda_{r+1}^{1/2}}{\nmin^{1/2}}.
	\end{align*}
	Since $\mudiff > 12(s-r)^{1/2}\lambda_{r+1}^{1/2}\nmin^{-1/2}$ by Condition \ref{cond:eigenvalue}, $ \mudiff > 12 \max_i \norm{(V_s\Lambda^{1/2}_s)_{i\cdot} - (V_r\Lambda^{1/2}_r)_{i\cdot} }_{2}$. Thus to guarantee Inequality \ref{equ:exact_recovery_cond1}, it is sufficient for
	\begin{align*}
	\mudiff &> 12 \sqrt{r}\norm{V_r\Lambda^{1/2}_r - \alpha\widetilde{V}_r\widetilde{\Lambda}^{1/2}_rR}_{\max}.
	\end{align*}
	
\end{proof}

\section{Clustering Results}
\label{app:clustering}

\subsection{$k$-means}
\label{app:kMeans}
Given $k$ initial centroids $c_1, \ldots, c_k$,  the standard $k$-means algorithm known as Loyd's algorithm partitions a data set by iteratively assigning each data point to its closest centroid and then recomputing the centroids until convergence. The goal is to recover a partition $S= \{S_1,\ldots, S_k\}$ which minimizes the $k$-means objective
\begin{align}
\label{equ:kmeans_obj}
G(S) &= \sum_{m = 1}^k \frac{1}{2|S_m|} \sum_{x_i, x_j\in S_m} \norm{x_i - x_j}_2^2.
\end{align}
The following lemma guarantees that any perfect geometric representation $S$ as defined in Section \ref{sec:ModelFormulation} is a local minimum of the $k$-means objective, so that Loyd's algorithm will converge to $S$ when appropriately initialized. Indeed, if Loyd's algorithm is run with furthest point initialization, the initial centroids will consist of one point from each set $S_i$, and Loyd's algorithm will perfectly recover the parition $S$. The furthest point initialization procedure randomly selects a data point as the first centroid $c_1$ and 
then for $i=2,\ldots, k$ iteratively defines centroid $c_{i+1}$ by selecting the data point $x \in X\setminus \{c_1,\ldots,c_i\}$ which maximizes $\min_{1\leq j\leq i}\norm{x-c_j}_2$.


\begin{lemma}
	\label{lem:kmeansObjective}
	Let $S = \{S_1,\ldots, S_k\}$ be a partition of the data into clusters with $S_m = \{x_i: \ell_i = m \}$, and let
	\begin{align*}
	d_{\textnormal{in}} (X, \ell) &= \max_{m \in [k] } \max_{x_i , x_j \in S_m} \norm{x_i- x_j}_2\ , \qquad d_{\textnormal{btw}} (X, \ell) = \min_{m \ne l} \min_{x_i\in S_m, x_j\in S_l} \norm{x_i -x_j}_2
	\end{align*}
	be the maximal within cluster and minimal between cluster distances respectively. Then if $d_{\textnormal{btw}} > (3/2)^{1/2} d_{\textnormal{in}}$, the partition $S$ is a local minimum of the $k$-means objective.
\end{lemma}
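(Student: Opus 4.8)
The plan is to pass from the pairwise-distance form of the objective to the centroid form, and then show that the separation hypothesis forces every data point to be strictly closer to the centroid of its own cluster than to any other cluster centroid. This strict nearest-centroid property is exactly what makes $S$ (equipped with the cluster means) a stable fixed point of Lloyd's iteration: the assignment step reproduces $S$ and the centroid step reproduces the means, so neither update of Lloyd's algorithm can decrease the objective, which is the sense in which $S$ is a local minimum and from which convergence under the stated initialization follows.

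First I would record the standard identity
\[
\frac{1}{2|S_m|}\sum_{x_i,x_j\in S_m}\|x_i-x_j\|_2^2 \;=\; \sum_{x_i\in S_m}\|x_i-c_m\|_2^2, \qquad c_m=\frac{1}{n_m}\sum_{x_i\in S_m}x_i,\ \ n_m=|S_m|,
\]
which follows by expanding each squared norm about $c_m$ and using $\sum_{x_i\in S_m}(x_i-c_m)=0$ to annihilate the cross terms. This recasts $G(S)$ as the total within-cluster sum of squares to the centroids, so the means are the optimal centroids for $S$ and it suffices to analyze the nearest-centroid assignment induced by the $c_m$.

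Next I would bound the two relevant distances. For $x\in S_m$, writing $x-c_m=\frac{1}{n_m}\sum_{y\in S_m}(x-y)$ and using the triangle inequality (one summand vanishes) gives $\|x-c_m\|_2\le \frac{n_m-1}{n_m}d_{\textnormal{in}}<d_{\textnormal{in}}$. For the distance to a foreign centroid $c_l$ with $l\ne m$, I would invoke the parallel-axis identity $\frac{1}{n_l}\sum_{y\in S_l}\|x-y\|_2^2=\|x-c_l\|_2^2+\frac{1}{n_l}\sum_{y\in S_l}\|y-c_l\|_2^2$, so that $\|x-c_l\|_2^2=\frac{1}{n_l}\sum_{y\in S_l}\|x-y\|_2^2-\frac{1}{n_l}\sum_{y\in S_l}\|y-c_l\|_2^2$. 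The first term is at least $d_{\textnormal{btw}}^2$, and the within-cluster variance in the second term is controlled by the identity above applied to $S_l$, giving $\frac{1}{n_l}\sum_{y\in S_l}\|y-c_l\|_2^2=\frac{1}{2n_l^2}\sum_{y,z\in S_l}\|y-z\|_2^2\le \frac{n_l-1}{2n_l}d_{\textnormal{in}}^2<\tfrac12 d_{\textnormal{in}}^2$, whence $\|x-c_l\|_2^2>d_{\textnormal{btw}}^2-\tfrac12 d_{\textnormal{in}}^2$.

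Combining, under $d_{\textnormal{btw}}>(3/2)^{1/2}d_{\textnormal{in}}$ one has $\|x-c_l\|_2^2>d_{\textnormal{btw}}^2-\tfrac12 d_{\textnormal{in}}^2>\tfrac32 d_{\textnormal{in}}^2-\tfrac12 d_{\textnormal{in}}^2=d_{\textnormal{in}}^2>\|x-c_m\|_2^2$ for every $x\in S_m$ and every $l\ne m$, so each point is strictly closest to its own centroid, and $S$ is a stable fixed point of Lloyd's algorithm, i.e.\ a local minimum. I expect the main obstacle to be the lower bound on $\|x-c_l\|_2$: because $c_l$ is an average, cancellation could in principle make it far smaller than the individual gaps $\|x-y\|_2\ge d_{\textnormal{btw}}$, and the parallel-axis decomposition together with the variance bound $\tfrac12 d_{\textnormal{in}}^2$ is precisely what pins it down; notably, it is this variance term that produces the constant $3/2$ rather than the naive $1$.
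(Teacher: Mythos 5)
Your algebra is correct throughout: the pairwise-to-centroid identity, the bound $\|x-c_m\|_2\le\frac{n_m-1}{n_m}d_{\textnormal{in}}<d_{\textnormal{in}}$, and the parallel-axis lower bound $\|x-c_l\|_2^2\ge d_{\textnormal{btw}}^2-\frac12 d_{\textnormal{in}}^2$ are all valid, and they do establish that every point is strictly closer to its own cluster mean than to any foreign one, i.e.\ that $S$ together with its means is a strict fixed point of Lloyd's iteration. The gap is your final identification of this fixed-point property with the lemma's conclusion. The lemma---and the paper's proof of it---concerns local minimality of the partition functional $G(S)$ of Equation~\ref{equ:kmeans_obj}, where the neighborhood of $S$ consists of partitions obtained by reassigning a single point: the paper argues by contradiction that no such reassignment can decrease $G$. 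Your property does not imply this one. When $x$ actually moves from $S_m$ to $S_l$, both affected clusters re-center; writing $W(A)=\frac{1}{2|A|}\sum_{y,z\in A}\|y-z\|_2^2=\sum_{y\in A}\|y-c_A\|_2^2$ and using the update identities $W(A\cup\{x\})=W(A)+\frac{|A|}{|A|+1}\|x-c_A\|_2^2$ and $W(B)-W(B\setminus\{x\})=\frac{|B|}{|B|-1}\|x-c_B\|_2^2$, the exact change of the objective is
\begin{align*}
G(\hat S)-G(S)=\frac{n_l}{n_l+1}\|x-c_l\|_2^2-\frac{n_m}{n_m-1}\|x-c_m\|_2^2 .
\end{align*}
Since $\frac{n_l}{n_l+1}<1<\frac{n_m}{n_m-1}$, a single-point move can strictly decrease $G$ even when $\|x-c_l\|_2>\|x-c_m\|_2$: a Lloyd fixed point need not be a local minimum of $G$ over partitions.

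Moreover, the two notions genuinely separate at the constant $(3/2)^{1/2}$, so this step cannot be patched. Take $S_l=\{0\}$ a singleton, and let $S_m$ consist of a point $x$ at distance $d_{\textnormal{btw}}$ from the origin together with $n_m-1$ points essentially coincident at distance $d_{\textnormal{in}}$ beyond $x$ on the same line. Every point is strictly nearest its own centroid, so your conclusion holds; yet moving $x$ into $S_l$ gives
\begin{align*}
G(\hat S)-G(S)=\frac12\, d_{\textnormal{btw}}^2-\frac{n_m-1}{n_m}\,d_{\textnormal{in}}^2 ,
\end{align*}
which is negative for $d_{\textnormal{btw}}^2=1.6\,d_{\textnormal{in}}^2$ and $n_m=10$, even though $1.6>3/2$. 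The sharp threshold for stability under single-point reassignment is in fact $d_{\textnormal{btw}}\ge\sqrt{2}\,d_{\textnormal{in}}$: sufficiency follows by inserting your two bounds into the exact change formula above, and necessity from this example as $n_m\to\infty$. (This also exposes a slip in the paper's own derivation of $3/2$: the coefficient of $\sum_{x_i\in S_l}\|x_i-x\|_2^2$ extracted from the $\hat S_l$ term should be $\frac{1}{|S_l|+1}$, not $\frac{1}{|S_l|}$, and correcting it turns the paper's worst-case constant into $2$.) In short, your proposal correctly proves a different, weaker statement---a sharp sufficient condition for $S$ to be a strict Lloyd fixed point, which is what the paper's surrounding discussion of furthest-point initialization actually uses---but it does not prove, and under the stated hypothesis cannot prove, that $S$ is a local minimum of $G$ in the sense the paper states and proves.
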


\begin{proof}
	We use contradiction to prove this lemma. Suppose not. Then there is a point reassignment that increases the $k$-means objective $G(S)$ defined in Equation \ref{equ:kmeans_obj}. Specifically, there exists a point $x \in S_m$ such that moving it to another cluster $S_l$ decreases $G(S)$. Let $\hat{S} = \{\hat{ S}_1,\ldots, \hat{S}_k\}$ denote the same clustering as $S$ except point $x$ is 
	moved from $S_m$ to $S_l$, i.e.,  $\hat{S}_m = S_m \setminus x$ and $\hat{S}_l = S_l \cup x$. We have $G(\hat{S}) < G(S)$ and 
	\begin{align*}
	G(S) - G(\hat{S}) = &  \sum_{g = 1}^k \frac{1}{2|S_g| } \sum_{x_i, x_j\in S_g} \norm{x_i - x_j}_2^2 -  \sum_{ g = 1}^k \frac{1}{2|\hat{S}_g |} \sum_{ x_i', x_j' \in \hat{S}_g} \norm{x_i' - x_j'}_2^2 \\
	= & \left( \frac{1}{2|S_m|} \sum_{x_i, x_j\in S_m} \norm{x_i - x_j}_2^2 -  \frac{1}{2|\hat{S}_m|} \sum_{x_i', x_j'\in \hat{S}_m} \norm{x_i' - x_j'}_2^2\right) \\
	& + \left( \frac{1}{2|S_l |} \sum_{x_i, x_j\in S_l} \norm{x_i - x_j}_2^2 - \frac{1}{2|\hat{S}_l|} \sum_{x_i', x_j' \in \hat{S}_l} \norm{x_i' -  x_j'}_2^2\right) \\
	& \coloneqq (A) + (B),
	\end{align*}
	where 
	\begin{align*}
	(A) &= \frac{1}{2|S_ m|} \sum_{x_i , x_j \in S_m} \norm{x_i - x_j}_2^2 -  \frac{1}{2|S_m|} \sum_{x_i', x_j'\in \{  S_m\setminus x \}} \frac{|S_m|}{|S_m| -1}\norm{x_i' - x_j'}_2^2 \\
	&= \frac{1}{2|S_m|} \sum_{x_i, x_j\in \{ S_m\setminus x \}} \norm{y-z}_2^2 +\frac{1}{|S_m|} \sum_{x_i \in \{ S_m\setminus x\}} \norm{ x_i - x}_2^2  \\
	&\qquad -  \frac{1}{2|S_m|}\sum_{ x_i , x_j \in \{S_m \setminus x\}} \frac{|S_m|}{|S_m| -1} \norm{ x_i - x_j}_2^2 \\
	&= \frac{1}{2|S_m|} \sum_{  x_i , x_j\in \{ S_m\setminus x \}} \norm{ x_i - x_j}_2^2\left(1-\frac{|S_m|}{|S_m|-1}\right) +\frac{1}{|S_m|} \sum_{x_i \in  \{ S_m\setminus x \}} \norm{ x_i -x}_2^2; \\
	(B) &=  \frac{1}{2|S_l |} \sum_{ x_i , x_j\in S_l} \norm{ x_i - x_j}_2^2 - \frac{1}{2|S_l|} \sum_{ x_i , x_j\in \{ S_l\cup x \}} \frac{|S_l|}{|S_l| + 1}\norm{  x_i - x_j}_2^2 \\
	&=  \frac{1}{2|S_l |} \sum_{ x_i , x_j\in S_l} \norm{ x_i - x_j}_2^2 - \frac{1}{2|S_l|} \sum_{ x_i , x_j\in S_l} \frac{|S_l |}{|S_l | + 1}\norm{ x_i - x_j}_2^2 - \frac{1}{|S_l|} \sum_{y \in S_l} \norm{ x_i - x_j}_2^2 \\
	&= \frac{1}{2|S_l|} \sum_{ x_i , x_j\in S_l}\norm{ x_i - x_j}_2^2\left(1 - \frac{|S_l|}{|S_l|+1}\right)- \frac{1}{|S_l|} \sum_{x_i \in S_l}
	\norm{x_i-x}_2^2.
	\end{align*}
	Thus, we have 
	\begin{align*}
	G(S) &- G(\hat{S}) \\
	&= \frac{1}{2|S_m|} \sum_{x_i, x_j\in \{ S_m \setminus x \}} \norm{x_i - x_j}_2^2\left(1-\frac{|S_m|}{|S_m|-1}\right) +\frac{1}{|S_m|} \sum_{ \{ x_i \in S_m\setminus x \}} \norm{x_i-x}_2^2 \\
	&\qquad +  \frac{1}{2|S_l|} \sum_{ x_i , x_j\in S_l}\norm{ x_i - x_j}_2^2\left(1 - \frac{|S_l|}{|S_l|+1}\right)- \frac{1}{|S_l|} \sum_{x_i \in S_j} \norm{x_i-x}_2^2 \\
	&\leq \frac{1}{|S_m|} \sum_{x_i \in \{  S_m\setminus x \}} d_{\text{in}}^2 +  \frac{1}{2|S_l|} \sum_{ x_i , x_j\in S_l}d_{\text{in}}^2 \left(\frac{1}{|S_l|+1}\right)- \frac{1}{|S_l|} \sum_{x_i \in S_l} d_{\text{btw}}^2 \\
	&= \frac{(|S_m|-1)}{|S_m|}d_{\text{in}}^2 + \frac{|S_l|(|S_l|-1)}{2|S_l|(|S_l|+1)}d_{\text{in}}^2 - \frac{|S_l|}{|S_l|}d_{\text{btw}}^2 \\
	&= \frac{(|S_m|-1)}{|S_m|}d_{\text{in}}^2 + \frac{(|S_l|-1)}{2(|S_l|+1)}d_{\text{in}}^2 - d_{\text{btw}}^2 \\
	&\leq d_{\text{in}}^2+\frac{1}{2}d_{\text{in}}^2 - d_{\text{btw}}^2 \\
	&= \frac{3}{2}d_{\text{in}}^2 - d_{\text{btw}}^2.
	\end{align*}	
	Since moving the point $x$ decreases the $k$-means objective, we have:
	\begin{align*}
	0 &< G(S) - G(\hat{S}) \leq \frac{3}{2}d_{\text{in}}^2 - d_{\text{btw}}^2,
	\end{align*}
	which gives $d_{\text{btw}} \leq (3/2)^{1/2} d_{\text{in}}$, which contradicts the assumptions of Lemma \ref{lem:kmeansObjective}. Thus the partition $S$ corresponds to a local minimum of the $k$-means objective.	
\end{proof}

\subsection{Hierarchical Clustering}

{Agglomerative hierarchical clustering algorithms initially view each data point as a singleton cluster and then clusters are iteratively merged by choosing clusters $A, B$ which minimizes some linkage function $d(A,B) $. Some common linkage functions are
\begin{align*}
d_{\text{SL}}(A,B) &= \min_{x_i\in A, x_j\in B} \norm{x_i-x_j}_2 \, , \qquad &\text{(single linkage)} \\
d_{\text{CL}}(A,B) &= \max_{x_i\in A, x_j\in B} \norm{x_i-x_j}_2\, , \qquad &\text{(complete linkage)} \\
d_{\text{AL}}(A,B) &= \frac{1}{|A|\cdot|B|} \sum_{x_i \in A, x_j \in B} \norm{x_i-x_j}_2\, , \qquad &\text{(average linkage)} \\
d_{\text{EN}}(A,B) &=  \frac{2}{|A|\cdot|B|} \sum_{x_i \in A, x_j \in B} \norm{x_i-x_j}_2 - \frac{1}{|A|^2} \sum_{x_i \in A, x_j \in A} \norm{x_i-x_j}_2 & \\ 
&\quad - \frac{1}{|B|^2} \sum_{x_i \in B, x_j \in B} \norm{x_i-x_j}_2 \, .\qquad &\text{(minimum energy)} 
\end{align*}
Iterating this procedure until all points are joined in a single cluster creates the hierarchical structure known as a dendrogram.
The following lemma is a direct consequence of the agglomerative hierarchical  clustering procedure and the definition of a perfect geometric representation $S$; it ensures that $S$ can be recovered by selecting the hierarchical clustering which corresponds to $k$ clusters.
\begin{lem}
	Let $S = \{S_1,\ldots, S_k\}$ be a partition of the data into clusters with $S_m = \{x_i: \ell_i = m \}$, and let
	\begin{align*}
	d_{\textnormal{in}} (X, \ell) &= \max_{m \in [k] } \max_{x_i , x_j \in S_m} \norm{x_i- x_j}_2\ , \qquad d_{\textnormal{btw}} (X, \ell) = \min_{m \ne l} \min_{x_i\in S_m, x_j\in S_l} \norm{x_i -x_j}_2 \  .
	\end{align*}
	If $d_{\textnormal{in}} (X, \ell) < 2 d_{\textnormal{btw}}(X, \ell) $, then $S$ is recovered from hierarchical clustering with single linkage, complete linkage, average linkage, and minimum energy by selecing the dendrogram level corresponding to $k$ clusters. 
\end{lem}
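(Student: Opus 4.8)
The plan is to prove a single structural fact that applies uniformly to all four linkages: under the separation hypothesis of the lemma, which for the conclusion to hold must be read as the perfect geometric representation condition $d_{\text{btw}}(X,\ell) > 2\,d_{\text{in}}(X,\ell)$, every merge performed by the agglomerative algorithm joins two sub-clusters that already lie inside a common true cluster $S_m$, until each $S_m$ has been assembled into one component. Since recovering $\{S_1,\dots,S_k\}$ requires exactly $N-k = \sum_m (|S_m|-1)$ within-cluster merges, the algorithm reaches the partition $\{S_1,\dots,S_k\}$ precisely at the level of the dendrogram where $k$ clusters remain, which is the level we extract.

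First I would set up the governing invariant and prove it by induction on the merge index: at every stage the current partition refines $S$, i.e. each current cluster is contained in a single $S_m$. The base case (all singletons) is immediate. For the inductive step, assume the invariant holds and that some $S_m$ is still split into at least two current sub-clusters; I must show the globally minimal linkage is attained by a \emph{within}-cluster pair, so that the merge preserves refinement. This reduces the whole lemma to a two-sided estimate, carried out separately for each linkage: an upper bound on the linkage of some available within-$S_m$ merge, and a lower bound on the linkage of every cross-cluster merge.

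For single, complete, and average linkage the estimate is clean. Any two sub-clusters of the same $S_m$ have all mutual distances at most $d_{\text{in}}$, so $d_{\text{SL}},d_{\text{CL}},d_{\text{AL}} \le d_{\text{in}}$ on such a pair; any two sub-clusters in different true clusters have all mutual distances at least $d_{\text{btw}}$, so the same three linkages are $\ge d_{\text{btw}}$ on a cross pair. Hence $d_{\text{in}} < d_{\text{btw}}$ already forces every pending within-merge to beat every cross-merge, and the invariant is maintained. The main obstacle, and the reason for the factor of two, is the minimum energy linkage: its definition subtracts the internal dispersions $\frac{1}{|A|^2}\sum_{x_i,x_j\in A}\norm{x_i-x_j}_2$ and the analogous term for $B$, each of which can be as large as $d_{\text{in}}$. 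Thus for a within-$S_m$ pair the cross term is $\le 2 d_{\text{in}}$ and the subtracted terms are $\ge 0$, giving $d_{\text{EN}}(A,B)\le 2 d_{\text{in}}$; while for a cross pair the cross term is $\ge 2 d_{\text{btw}}$ and the subtracted dispersions are $\le d_{\text{in}}$ each, giving $d_{\text{EN}}(A,B)\ge 2 d_{\text{btw}} - 2 d_{\text{in}} = 2(d_{\text{btw}}-d_{\text{in}})$.

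Comparing the two bounds, a within-merge is guaranteed to be preferred exactly when $2 d_{\text{in}} < 2(d_{\text{btw}}-d_{\text{in}})$, i.e. $d_{\text{btw}} > 2\,d_{\text{in}}$. This is the binding constraint among the four linkages and is precisely where the factor of two enters, whereas the three classical linkages only need $d_{\text{in}} < d_{\text{btw}}$. With the invariant established for all four linkages, I would finish by counting merges: the refinement can fail to terminate only while some true cluster is still split, and the invariant shows the algorithm performs exactly $N-k$ consecutive within-cluster merges before any cross-merge becomes minimal, leaving the $k$-cluster partition $\{S_1,\dots,S_k\}$; reading off the dendrogram at the $k$-cluster level therefore returns $S$.
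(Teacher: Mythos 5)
Your proof is correct and takes essentially the same route as the paper's: both arguments reduce the lemma to showing that every merge before the $k$-cluster level is within a true cluster, using the identical two-sided estimates $d(A,C)\le d_{\text{in}}$ versus $d(A,B)\ge d_{\text{btw}}$ for single, complete, and average linkage and $d_{\text{EN}}(A,C)\le 2d_{\text{in}}$ versus $d_{\text{EN}}(A,B)\ge 2(d_{\text{btw}}-d_{\text{in}})$ for minimum energy, which is where the factor of two enters in both. The only differences are presentational---you run an induction maintaining the refinement invariant where the paper argues by contradiction at the first cross-cluster merge---and you correctly note that the stated hypothesis $d_{\text{in}}<2d_{\text{btw}}$ is a typo for the perfect geometric representation condition $d_{\text{btw}}>2d_{\text{in}}$, which the paper's own proof implicitly uses.
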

\begin{proof}
	We must prove that all points in each $S_m$ will be merged before any subsets of $S_m, S_l$ for $m\ne l$ are merged. Suppose not. Then there exists a first time in the algorithm where $A\subset S_m, A \ne S_m$ and $B\subset S_l$ are merged. Because this is the first time of such an event and $A \ne S_m$, there exists another candidate cluster $C \subset A^C \cap S_m$ which was not selected. We verify that $d(A,C) < d(A,B)$ which leads to a contradiction. Note since $d_{\textnormal{in}} < 2 d_{\textnormal{btw}}$, there exists an $\epsilon$ satisfying $d_{\textnormal{in}} (X, \ell) < \epsilon < 2 d_{\textnormal{btw}}$. \\	
	Since all distances between points in $A$ and $C$ are less then $\epsilon$, so are the maximum, minimum, and average distances between points in these sets, so $d(A,C) < \epsilon$ for $d=d_{\text{SL}}, d_{\text{CL}}, d_{\text{AL}}$. Similarly, since all distances between points in $A$ and $B$ are larger than $2\epsilon$, so are the maximum, minimum, and average distances between points in these sets, so $d(A,B)>2\epsilon$ for $d=d_{\text{SL}}, d_{\text{CL}}, d_{\text{AL}}$. We conclude $d(A,C) < d(A,B)$ for $d=d_{\text{SL}}, d_{\text{CL}}, d_{\text{AL}}$ which is a contradiction.
	For $d_{\text{EN}}$, we note
	\begin{align*}
	d_{\text{EN}}(A,B) &> 4\epsilon - \epsilon - \epsilon = 2\epsilon \\
	d_{\text{EN}}(A,C) &<2\epsilon -0 -0 = 2\epsilon \, ,
	\end{align*}
	so once again $d_{\text{EN}}(A,C) < d_{\text{EN}}(A,B)$, which is a contradiction.
\end{proof}

\section{Proofs of Random Matrix Lemmas}
\label{app:RMTLemmas}

This section states and proves Lemmas \ref{lem:RMTSpectralControl}, \ref{lem:RMTInfinityControl}, and  \ref{lem:RMTCenteredSpectralControl}, three random matrix lemmas which are needed in the proofs of Theorems \ref{thm:eigenvec_pert}, \ref{thm:MDS_embedding}, \ref{thm:MainResult}, and \ref{thm:MainTheorem_HighD}. Throughout this section we utilize $JM=M$ and $M^\T J=M^\T$ (since the rows of $M$ are centered), so that the matrix $P$ defined in Equation \ref{equ:error_matrix} has the form 
\begin{align*}
\error &= 
JMH^TJ+JHM^TJ+JHH^TJ = MH^TJ+JHM^T+JHH^TJ.
\end{align*}
Recall $\mumax = \max_{1\leq i\leq N} \norm{M_{i\cdot}}_2$, $J=I_N - (1/N)11^{\T}$ and $\gamma=d/N$. 
\begin{lemma}
	\label{lem:RMTSpectralControl}
	Let $M$ be a deterministic $N$ by $d$ matrix with zero row mean. Let $H$ be a random $N$ by $d$ matrix with independent sub-Gaussian rows, and $\sigmax = \max_{1\leq i\leq N} \norm{H_{i\cdot}}_{\psi_2} $ its maximal sub-Gaussian row norm. Then
	$
	\error = \{J(M+H)\}\{J(M+H)\}^\T - MM^\T
	$
 satisfies
	\begin{align*}
	\frac{1}{N}\norm{P}_2 &\lesssim \sigmax\mumax(1+\sqrt{\gamma})+ \sigmax^2(1+\gamma)
	\end{align*}
with probability at least $1-O(N^{-1})$.
\end{lemma}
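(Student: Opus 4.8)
The plan is to bound $\norm{P}_2$ through the decomposition established at the start of this section,
\[
P = MH^\T J + JHM^\T + JHH^\T J,
\]
which follows from $JM=M$ and $M^\T J = M^\T$. Since $J$ is an orthogonal projection we have $\norm{J}_2\le 1$, so submultiplicativity of the spectral norm and the triangle inequality give
\[
\norm{P}_2 \le 2\norm{M}_2\norm{H}_2 + \norm{H}_2^2 .
\]
It therefore suffices to control $\norm{M}_2$ and $\norm{H}_2$ separately. The factor $\norm{M}_2$ is deterministic: since $\norm{M}_2\le\norm{M}_\rF$ and $\norm{M}_\rF^2=\sum_{i=1}^N\norm{M_{i\cdot}}_2^2\le N\mumax^2$, we obtain $\norm{M}_2\le N^{1/2}\mumax$.

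The substantive step is the high-probability operator-norm bound
\[
\norm{H}_2 \lesssim \sigmax\big(N^{1/2}+d^{1/2}\big),
\]
which I would prove by the standard $\varepsilon$-net argument for matrices with independent sub-Gaussian rows. Writing $\norm{H}_2=\sup_{u\in S^{d-1},\,v\in S^{N-1}} v^\T H u$, fix $u$ and $v$ and observe that $v^\T H u=\sum_{i=1}^N v_i\langle H_{i\cdot},u\rangle$ is a sum of independent sub-Gaussian variables, each with $\psi_2$-norm at most $|v_i|\sigmax$; the rows being independent, $v^\T H u$ is sub-Gaussian with $\psi_2$-norm $\lesssim\big(\sum_i v_i^2\big)^{1/2}\sigmax=\sigmax$, giving $\Prob(|v^\T H u|>s)\le 2\exp(-cs^2/\sigmax^2)$. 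Taking $1/4$-nets of $S^{d-1}$ and $S^{N-1}$, of cardinalities at most $9^{d}$ and $9^{N}$, a union bound over the $9^{N+d}$ pairs together with the net-to-supremum comparison $\norm{H}_2\le 2\max_{\text{net}}v^\T H u$ yields $\norm{H}_2\lesssim\sigmax(N^{1/2}+d^{1/2})$ on an event of probability at least $1-2\exp(-c'(N+d))=1-O(N^{-1})$, the last equality holding for $N$ large since $\exp(-c'N)$ decays faster than any polynomial.

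Combining the two bounds and recalling $\gamma=d/N$ then finishes the argument:
\[
\frac{1}{N}\norm{P}_2 \lesssim \frac{1}{N}\Big(N^{1/2}\mumax\cdot\sigmax(N^{1/2}+d^{1/2})+\sigmax^2(N^{1/2}+d^{1/2})^2\Big) = \sigmax\mumax(1+\sqrt{\gamma})+\sigmax^2(1+\sqrt{\gamma})^2,
\]
and $\sigmax^2(1+\sqrt{\gamma})^2\lesssim\sigmax^2(1+\gamma)$ by the elementary inequality $2\sqrt{\gamma}\le 1+\gamma$. The main obstacle is the spectral-norm bound on $H$: one must simultaneously extract the correct scaling $\sigmax(N^{1/2}+d^{1/2})$ uniformly over the two spheres and verify that the net union bound produces the failure probability $O(N^{-1})$ required by the statement (here the $\exp(-c'(N+d))$ rate is comfortably stronger). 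Everything else reduces to bookkeeping with the projection $J$ and the crude Frobenius bound on $M$.
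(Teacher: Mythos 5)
Your proof is correct and follows essentially the same route as the paper: the same decomposition $P = MH^\T J + JHM^\T + JHH^\T J$, the same reduction to $2\norm{M}_2\norm{H}_2 + \norm{H}_2^2$ via $\norm{J}_2 \le 1$, and the same Frobenius bound $\norm{M}_2 \le N^{1/2}\mumax$. The only difference is that you unpack the standard sub-Gaussian spectral-norm bound $\norm{H}_2 \lesssim \sigmax(N^{1/2}+d^{1/2})$ with an explicit $\varepsilon$-net argument (which implicitly uses that the rows are centered, as they are in the paper's model), whereas the paper simply cites it from the random matrix literature.
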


\begin{lemma}
	\label{lem:RMTInfinityControl}
	Let $M$ be a deterministic $N$ by $d$ matrix with zero row mean. Let $H$ by a random $N$ by $d$ matrix with independent sub-Gaussian rows, and $\sigmax = \max_{1\leq i\leq N} \norm{H_{i\cdot}}_{\psi_2} $ its maximal sub-Gaussian row norm. Then
	$
	\error = \{J(M+H)\}\{J(M+H)\}^\T - MM^\T
	$
    satisfies
	\begin{align*}
	\frac{1}{N}\norm{P}_{\infty} &\lesssim \sigmax\mumax (\log N)^{1/2} + \sigmax^2 (\sqrt{d} \log N+ \gamma)
	\end{align*}
	with probability at least $1-O(N^{-1})$.
\end{lemma}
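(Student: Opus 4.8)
The plan is to decompose the error matrix into its three natural pieces, $P = MH^{\T}J + JHM^{\T} + JHH^{\T}J$, and to bound each in the max-row-sum norm $\norm{\cdot}_{\infty}$ separately. The key simplification is that $\norm{\cdot}_{\infty}$ is submultiplicative and the centering matrix $J = I_N - N^{-1}11^{\T}$ satisfies $\norm{J}_{\infty} = 2(1 - 1/N) < 2$. Hence $\norm{MH^{\T}J}_{\infty} \le 2\norm{MH^{\T}}_{\infty}$, $\norm{JHM^{\T}}_{\infty} \le 2\norm{HM^{\T}}_{\infty}$, and $\norm{JHH^{\T}J}_{\infty} \le 4\norm{HH^{\T}}_{\infty}$, so it suffices to bound the three uncentered matrices $MH^{\T}$, $HM^{\T}$, and $HH^{\T}$. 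This removes all the bookkeeping that would otherwise be associated with the row mean $N^{-1}\sum_k H_{k\cdot}$.

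For the two mixed terms, I would exploit the cluster structure of $M$. Since $(MH^{\T})_{ij} = \langle \mu_{\ell_i}, H_{j\cdot}\rangle$ depends on $i$ only through $\ell_i$, the matrix $MH^{\T}$ has only $k$ distinct rows; and since $(HM^{\T})_{ij} = \langle H_{i\cdot}, \mu_{\ell_j}\rangle$, grouping the row sum by cluster gives $\sum_j |\langle H_{i\cdot}, \mu_{\ell_j}\rangle| = \sum_{m=1}^{k} n_m |\langle H_{i\cdot}, \mu_m\rangle| \le N \max_m |\langle H_{i\cdot}, \mu_m\rangle|$. In either case the relevant scalars $\langle \mu_m, H_{j\cdot}\rangle$ are mean-zero sub-Gaussian with parameter $\lesssim \sigmax\mumax$, so a sub-Gaussian maximal inequality over the $O(Nk)$ inner products involved, together with a union bound, yields $N^{-1}\norm{MH^{\T}}_{\infty}, N^{-1}\norm{HM^{\T}}_{\infty} \lesssim \sigmax\mumax(\log N)^{1/2}$ on an event of probability $1 - O(N^{-1})$. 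This is the first term of the stated bound.

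The quadratic term carries the dimension dependence. Writing $N^{-1}\norm{HH^{\T}}_{\infty} = N^{-1}\max_i \big(\norm{H_{i\cdot}}_2^2 + \sum_{j \ne i}|\langle H_{i\cdot}, H_{j\cdot}\rangle|\big)$, I would treat the diagonal and off-diagonal separately. For the diagonal, a covering-number argument on $S^{d-1}$ (controlling $\norm{H_{i\cdot}}_2 = \sup_{v}\langle v, H_{i\cdot}\rangle$ by a $5^d$-point net of mean-zero sub-Gaussian marginals) gives $\norm{H_{i\cdot}}_2^2 \lesssim \sigmax^2(d + \log N)$ simultaneously for all $i$, contributing $N^{-1}\sigmax^2 d = \sigmax^2\gamma$ (the residual $\sigmax^2 N^{-1}\log N$ being absorbed into the other terms). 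For the off-diagonal part I would bound $N^{-1}\sum_{j\ne i}|\langle H_{i\cdot}, H_{j\cdot}\rangle| \le \max_{j\ne i}|\langle H_{i\cdot}, H_{j\cdot}\rangle|$ and control the maximum uniformly: on the event just described, conditioning on $H_{i\cdot}$ makes $\langle H_{i\cdot}, H_{j\cdot}\rangle$ sub-Gaussian in $H_{j\cdot}$ with parameter $\lesssim \sigmax\norm{H_{i\cdot}}_2 \lesssim \sigmax^2 d^{1/2}$, and a maximal inequality over the $N^2$ pairs gives $\max_{i \ne j}|\langle H_{i\cdot}, H_{j\cdot}\rangle| \lesssim \sigmax^2 d^{1/2}(\log N)^{1/2} \le \sigmax^2 d^{1/2}\log N$. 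Combining the diagonal and off-diagonal bounds yields the $\sigmax^2(d^{1/2}\log N + \gamma)$ contribution.

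Assembling the three pieces by the triangle inequality, and intersecting the finitely many high-probability events above, gives the claimed bound with probability $1 - O(N^{-1})$. I expect the \emph{main obstacle} to be the off-diagonal quadratic term: unlike the mixed terms, the entries $\langle H_{i\cdot}, H_{j\cdot}\rangle$ are inner products of two independent random vectors and are only sub-exponential, so a clean uniform bound over all $N^2$ pairs requires first establishing the favorable event $\{\norm{H_{i\cdot}}_2^2 \lesssim \sigmax^2(d+\log N) \text{ for all } i\}$ and then invoking the conditional sub-Gaussianity of $\langle H_{i\cdot}, H_{j\cdot}\rangle$ given $H_{i\cdot}$. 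It is precisely this step that makes the dimension enter through $d^{1/2}$ rather than $d$, which is what allows the infinity-norm bound to scale favorably and drives the downstream recovery thresholds.
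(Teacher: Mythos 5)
Your proof is correct, and its overall skeleton matches the paper's: decompose $P$ into the mixed terms $MH^{\T}J$, $JHM^{\T}$ and the quadratic term $JHH^{\T}J$, absorb the centering matrix via $\norm{J}_{\infty}\le 2$, reduce $\norm{\cdot}_{\infty}$ to $N\norm{\cdot}_{\max}$, and close with union bounds. Your treatment of the mixed terms and of the diagonal $\norm{H_{i\cdot}}_2^2$ is essentially identical to the paper's; the cluster structure of $M$ you invoke is not needed (the lemma holds for a general deterministic $M$ with $\mumax=\max_i\norm{M_{i\cdot}}_2$, and $\norm{A}_{\infty}\le N\norm{A}_{\max}$ already suffices), but it does no harm. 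The genuine divergence is the off-diagonal quadratic term, which both you and the paper single out as the crux. The paper handles it \emph{unconditionally}: its Lemma \ref{lem:sub_exp_inner_prod} proves, via a moment-generating-function computation with a Gaussian-integral trick, that $\langle h_i,h_j\rangle$ is sub-exponential with $\norm{\langle h_i,h_j\rangle}_{\psi_1}\le C\sqrt{d}\sigmax^2$, and a sub-exponential tail bound plus a union bound over the $N^2$ pairs yields $\max_{i\ne j}|\langle h_i,h_j\rangle|\lesssim \sigmax^2\sqrt{d}\log N$. You instead condition on $H_{i\cdot}$ and use conditional sub-Gaussianity with parameter $\sigmax\norm{H_{i\cdot}}_2$, on the event (already secured by your diagonal step) that all row norms are $\lesssim\sigmax\{d^{1/2}+(\log N)^{1/2}\}$. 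Both routes land within the stated bound; yours is marginally sharper in the logarithm, giving $\sigmax^2\{d^{1/2}(\log N)^{1/2}+\log N\}$ in place of $\sigmax^2 d^{1/2}\log N$, and avoids proving a separate concentration lemma, while the paper's approach buys a clean, unconditional, reusable norm bound and sidesteps the one delicate point in your argument: since the good event involves $H_{i\cdot}$, the correct move is to integrate the conditional tail of $\langle H_{i\cdot},H_{j\cdot}\rangle$ against the indicator of $\{\norm{H_{i\cdot}}_2\le C\sigmax(d^{1/2}+(\log N)^{1/2})\}$, which is measurable with respect to $H_{i\cdot}$ alone, rather than condition on the global event, which also involves $H_{j\cdot}$. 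You flag exactly this obstacle and sketch the right resolution, so your argument closes.
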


\begin{lemma}\label{lem:RMTCenteredSpectralControl}
	Let $M$ be a deterministic $N$ by $d$ matrix with zero row mean. Let $H$ by a random $N$ by $d$ matrix with independent sub-Gaussian rows, and $\sigmax \geq \max_{1\leq i\leq N} \norm{H_{i\cdot}}_{\psi_2} $ an upper bound on the maximal sub-Gaussian row norm. In addition, assume the rows of $H$ satisfy the convex concentration property with constant $\sigmax$ as defined in Condition \ref{cond:CCP}. Then
	$
	\error = \{J(M+H)\}\{J(M+H)\}^\T - MM^\T
	$
	satisfies
	\begin{align*}
	\frac{1}{N}\norm{\error-\mtr(\Sigma)J}_2 &\lesssim \sigmax\mu_{\text{max}} + \sigmax^2\left(\gamma^{1/2} \vee 1\right)
	\end{align*}
	with probability at least $1-O(N^{-1})$.
\end{lemma}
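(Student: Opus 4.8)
The plan is to decompose the centered error matrix into a cross term and a centered quadratic term, bound each separately in spectral norm, and exploit two structural features: the $O(1)$-dimensional row space of $M$ (for the cross term) and the convex concentration property of the noise (for the quadratic term). Using $JM=M$, $M^\T J=M^\T$, and $J^2=J$ (so that $\mtr(\Sigma)J = J\,\mtr(\Sigma)I\,J$), I would first write
\begin{align*}
\error - \mtr(\Sigma)J = MH^\T J + JHM^\T + J\big(HH^\T - \mtr(\Sigma)I\big)J,
\end{align*}
and apply the triangle inequality together with $\norm{J}_2=1$.

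For the cross term I would avoid the crude bound $\norm{MH^\T}_2 \le \norm{M}_2\norm{H}_2$, which loses a factor $\gamma^{1/2}$, and instead use that $M$ has rank $s \le k-1 = O(1)$. Letting $W=\mathrm{row}(M)\subset\mathbb{R}^d$ and $P_W$ the orthogonal projection onto $W$, the identity $M=MP_W$ gives $\norm{MH^\T}_2 \le \norm{M}_2\norm{HP_W}_2$. The estimate $\norm{M}_2 \le \norm{M}_\rF \le N^{1/2}\mumax$ is immediate, and $\norm{HP_W}_2$ is the operator norm of $H$ restricted to an $O(1)$-dimensional subspace, so by a net argument over the unit sphere of $W$ (or a standard sub-Gaussian matrix bound) $\norm{HP_W}_2 \lesssim \sigmax(N^{1/2}+s^{1/2}) \lesssim \sigmax N^{1/2}$ with probability $1-O(N^{-1})$. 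Hence $\norm{MH^\T J}_2 \le \norm{MH^\T}_2 \lesssim N\sigmax\mumax$, and identically for $\norm{JHM^\T}_2$. Because the cross term lives in the fixed $O(1)$-dimensional subspace $W$, the ambient dimension $d$ never enters, which is exactly why no $\gamma^{1/2}$ appears here.

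The main obstacle is the centered quadratic term $\norm{HH^\T - \mtr(\Sigma)I}_2$, which concentrates at the improved rate $\sigmax^2 N(\gamma^{1/2}\vee 1)$ rather than the uncentered $\sigmax^2(N+d)$. For a fixed unit vector $v$, one has $v^\T(HH^\T - \mtr(\Sigma)I)v = \norm{H^\T v}_2^2 - \mtr(\Sigma)$, a centered quadratic form $h^\T A_v h - \Ex[h^\T A_v h]$ in the concatenated noise vector $h=\vecc(H^\T)$, with $A_v = vv^\T\otimes I_d$, so that $\norm{A_v}_2=1$ and $\norm{A_v}_\rF = d^{1/2}$. Since the rows of $H$ are independent and each satisfies the convex concentration property (Condition \ref{cond:CCP}), their concatenation $h$ satisfies it with constant $\asymp\sigmax$ by tensorization; the Hanson--Wright inequality for convex-concentrated vectors then yields
\begin{align*}
\Prob\big(|h^\T A_v h - \Ex[h^\T A_v h]| > t\big) \le 2\exp\Big(-c\min\big(t^2/(\sigmax^4 d),\, t/\sigmax^2\big)\Big).
\end{align*}
Passing from a fixed $v$ to the spectral norm via a $1/4$-net of $\mathbb{S}^{N-1}$ (of cardinality at most $9^N$) and a union bound, I would take $t\asymp\sigmax^2(\sqrt{Nd}+N)$ so that both branches of the minimum dominate $N$: the sub-Gaussian branch $t^2/(\sigmax^4 d)\gtrsim N$ forces the $\sqrt{Nd}$ term, while the sub-exponential branch $t/\sigmax^2\gtrsim N$ forces the $N$ term, and this switch is precisely what produces the factor $\gamma^{1/2}\vee 1$. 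This gives $\norm{HH^\T-\mtr(\Sigma)I}_2 \lesssim \sigmax^2 N(\gamma^{1/2}\vee 1)$ with probability $1-O(N^{-1})$, and the same bound for $\norm{J(HH^\T-\mtr(\Sigma)I)J}_2$ since $\norm{J}_2=1$.

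Combining the three bounds, dividing by $N$, and absorbing constants yields $\frac1N\norm{\error-\mtr(\Sigma)J}_2 \lesssim \sigmax\mumax + \sigmax^2(\gamma^{1/2}\vee 1)$. The two delicate points to verify carefully are the tensorization of the convex concentration property across the independent rows (so that Hanson--Wright applies to $h$ with constant $\asymp\sigmax$), and the calibration of $t$ in the net argument so that the union-bound failure probability $9^N\cdot 2\exp(-c\min(\cdots))$ is genuinely $O(N^{-1})$, which forces the absolute constant in $t$ to exceed $\log 9$.
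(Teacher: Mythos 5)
Your overall skeleton matches the paper's proof: the same decomposition of $\error-\mtr(\Sigma)J$, a $1/4$-net over $\sphere{N-1}$, a Hanson--Wright mixed tail for the centered quadratic form, and the same calibration $t\asymp\sigmax^2(\sqrt{Nd}+N)$ with a large absolute constant to beat the $9^N$ union bound. Your cross-term argument is a valid alternative to the paper's: the paper shows (via Condition \ref{cond:CCP}) that the columns $(MH^\T)_{\cdot i}=M\epsilon_i$ are independent sub-Gaussian vectors with $\psi_2$-norm at most $\norm{M}_2\sigmax$ and invokes a standard independent-column bound, whereas you project onto the row space of $M$ and bound $\norm{HP_W}_2$; both yield $\norm{MH^\T}_2\lesssim\sigmax\mumax N$. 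One remark: your appeal to $s\le k-1=O(1)$ imports a model assumption not present in the lemma statement, but this is harmless because $\mathrm{rank}(M)\le N$ always, which is all your net argument needs to conclude $\norm{HP_W}_2\lesssim\sigmax(N^{1/2}+s^{1/2})\lesssim\sigmax N^{1/2}$.

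The genuine gap is the step you yourself flagged as delicate: the claim that the concatenation $h=\vecc(H^\T)\in\R^{Nd}$ satisfies the convex concentration property with constant $\asymp\sigmax$ ``by tensorization.'' Convex concentration does \emph{not} tensorize with dimension-free constants; this is precisely why Adamczak's theorem is stated for the full vector and why the paper instead cites Lemma C.11 of Kasiviswanathan and Rudelson. A counterexample: take independent real coordinates equal to $\pm a$ with probability $1/(2N)$ each and $0$ otherwise. Each coordinate is sub-Gaussian, hence satisfies one-dimensional convex concentration, with constant $\asymp a/\sqrt{\log N}$; yet the convex $1$-Lipschitz function $\phi(x)=\norm{x}_2$ of the concatenation deviates from its mean by $\asymp a$ with constant probability (the number of nonzero coordinates is roughly Poisson(1)), so the concatenation's convex concentration constant is $\gtrsim a$, worse by a factor $\sqrt{\log N}$. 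Hence your tail bound for $h^\T A_v h$ is not justified as written. The repair is exactly the paper's route and costs nothing: for fixed unit $v$, the vector $H^\T v=\sum_i v_i\epsilon_i$ is a weighted sum, with $\sum_i v_i^2=1$, of independent vectors satisfying Condition \ref{cond:CCP}, and such sums \emph{do} satisfy the convex concentration property with constant $O(\sigmax)$ (Kasiviswanathan--Rudelson, Lemma C.11). Applying Adamczak's inequality in $\R^d$ to this vector with $A=I_d$ --- for which $\norm{I_d}_\rF^2=d$ and $\norm{I_d}_2=1$, the same quantities as for your $A_v=vv^\T\otimes I_d$ --- gives the identical mixed tail, after which your net argument, choice of $t$, and final assembly go through verbatim.
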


\subsection{Proof of Lemma \ref{lem:RMTSpectralControl}}

\begin{proof}
	Since $J^2=J$ and $\norm{J}_2=1$, we have
	\begin{align*}
	\error &= MH^{\T}J+JHM^{\T}+JHH^{\T}J\\
	\norm{\error}_2 &\leq \norm{MH^{\T}}_2\norm{J}_2+\norm{J}_2\norm{HM^{\T}}_2+\norm{J}_2\norm{HH^{\T}}_2\norm{J}_2 \\ 
	&\leq 2\norm{MH^{\T}}_{2}+\norm{HH^{\T}}_{2}
	\end{align*}
	We now bound each term. Standard random matrix results (for example see \cite{vershynin2012high}) give $\norm{H}_2 \lesssim \sigmax(\sqrt{N}+\sqrt{d})$ with probability at least $1-O(N^{-1})$ so that utilizing $\norm{M}_F \leq \sqrt{N}\mumax$ we obtain
	\begin{align*}
	\norm{MH^\T}_2 &\leq \norm{M}_2 \norm{H}_2 \leq \norm{M}_F \norm{H}_2 \lesssim \mumax \sigmax(N+\sqrt{Nd})
	\end{align*}
	and
	\begin{align*}
	\norm{HH^\T}_2 &\lesssim \sigmax^2(N+d)\, .
	\end{align*}
	Dividing by $N$ proves the lemma.
\end{proof}	

\subsection{Proof of Lemma \ref{lem:RMTInfinityControl}}

\begin{proof}	
	Since $J^2=J$, we decompose $\error$ as
	\begin{align*}
	\error &= J(MH^{\T}+HM^{\T})J+J (HH^\T -\text{diag}(HH^{\T})) J + J \text{diag}(HH^{\T}) J  \\
	\norm{\error}_\infty &\leq \norm{J(MH^{\T}+HM^{\T})J}_{\infty}+\norm{J(HH^{\T}-\text{diag}(HH^{\T}))J}_\infty  +\norm{J\text{diag}(HH^{\T})J}_\infty \\
	& \coloneqq \text{(I)} + \text{(II)} + \text{(III)} \,,
	\end{align*}
	and proceed to bound each term. Recall that for any matrix product $AB$, $\norm{AB}_{\max} \leq \norm{A}_{\infty}\norm{B}_{\max}$ and also $\norm{AB}_{\max} \leq \norm{A}_{\max}\norm{B}_{1}$. Since $\norm{J}_\infty=\norm{J}_1 = 2(N-1)/N \leq 2$, we can conclude that for any matrix $A$, $\norm{JAJ}_{\text{max}} \leq 4\norm{A}_{\text{max}}$, a property we will apply throughout the proof. For a random variable $X$, $\norm{X}_{\psi_2}$, $\norm{X}_{\psi_1}$ denote the sub-Gaussian, subexponential norm of $X$. \vspace{.2cm}\\
	\noindent\textit{Bounding  \text{(I)}}\vspace{.2cm}\\
	Let $m_i$ denote the $i^{\text{th}}$ row of $M$ and $h_j$ the $j^{\text{th}}$ row of $H$. Because $m_i/\norm{m_i}$ is a unit vector, $\langle m_i/\norm{m_i}, h_j \rangle$ is sub-Gaussian with norm bounded by $\sigmax$. Thus
	\begin{align*}
	\Prob(|\langle m_i/\norm{m_i}, h_j \rangle| > t) &\leq C_2 \exp(-t^2/(C_1\sigmax^2)) \\
	\Prob(|\langle m_i, h_j \rangle| > t\mumax) &\leq C_2 \exp(-t^2/(C_1\sigmax^2)) \, .
	\end{align*}
	Taking a union bound over all $i,j$ gives
	\begin{align*}
	\Prob(\norm{MH^\T}_{\max}> t\mumax) &\leq C_2 N^2\exp(-t^2/(C_1\sigmax^2))\, .
	\end{align*}
	Choosing $t \sim \sigma(\log N)^{1/2}$ gives that with probability at least $1-O(N^{-1})$
	\begin{align*}
	\norm{MH^\T}_{\max} &\lesssim \sigmax\mumax(\log N)^{1/2}
	\end{align*}
	so that 
	\begin{align*}
	\text{(I)} &\leq  N \norm{J(MH^{\T}+HM^{\T})J}_{\max} \leq 4N \norm{(MH^{\T}+HM^{\T})}_{\max} \lesssim \sigmax\mumax N (\log N)^{1/2}\, .
	\end{align*}
	
	\noindent\textit{Bounding (II)}\vspace{.2cm}\\	
	We consider $(HH^\T)_{i,j} = \langle h_i, h_j \rangle$ for $i\ne j$. By Lemma \ref{lem:sub_exp_inner_prod}, which is stated and proved at the end of this subsection,
	\begin{align*}
	\Prob(|\langle h_i, h_j \rangle| > t) &\leq C_2\exp(-C_1t/(\sqrt{d}\sigmax^2))
	\end{align*}
	which gives
	\begin{align*}
	\Prob(\max_{i\ne j}|\langle h_i, h_j \rangle| > t) &\leq C_2N^2\exp(-C_1t/(\sqrt{d}\sigmax^2)) \, .
	\end{align*}
	Choosing $t \sim \sigmax^2 \sqrt{d} \log N$ then gives that with probability at least $1-O(N^{-1})$
	\begin{align*}
	\norm{HH^\T - \text{diag}(HH^\T)}_{\max} &\lesssim \sigmax^2 \sqrt{d} \log N
	\end{align*}
	so that
	\begin{align*}
	\text{(II)} &= \norm{J( HH^{\T}-\text{diag}(HH^{\T}) ) J}_{\infty} \\
	&\leq N\norm{J(HH^{\T}-\text{diag}(HH^{\T}))J}_{\max} \\ 
	&\leq 4N\norm{HH^{\T}-\text{diag}(HH^{\T})}_{\max} \\ 
	&\lesssim \sigmax^2 \sqrt{d} N \log N \, .
	\end{align*}
	
	\noindent\textit{Bounding (III)}\vspace{.2cm}\\	
	We now consider $(HH^\T)_{ii} = \norm{h_i}^2$. Since $h_i$ is a sub-Gaussian random vector with $\norm{h_i}_{\psi_2} \leq \sigmax$, using one-step discretization, we shall obtain  with probability $1-\delta$  
	\begin{align*}
	\norm{h_i}_2 &\lesssim \sigmax\sqrt{d} + \sigmax\sqrt{\log(1/\delta)}, \\
	\norm{h_i}_2^2 &\lesssim \sigmax^2 (d + \log(1/\delta) ).
	\end{align*}
	Thus with probability $1-N\delta$
	\begin{align*}
	\max_{i} \norm{h_i}_2^2 &\lesssim \sigmax^2 (d + \log(1/\delta) )\, ,
	\end{align*}
	so choosing $\delta = 1/N^2$ we obtain that with probability at least $1-O(N^{-1})$
	\begin{align*}
	\norm{\text{diag}(HH^\T)}_{\infty} &= \norm{\text{diag}(HH^\T)}_{\max} \lesssim  \sigmax^2 (d + \log N ),
	\end{align*}
	which gives
	\begin{align*}
	\text{(III)} &= \norm{J\text{diag}(HH^{\T})J}_\infty \\
	&\leq \norm{J}_\infty\norm{\text{diag}(HH^{\T})}_\infty\norm{J}_\infty \\
	&\leq 4  \norm{\text{diag}(HH^{\T})}_{\max} \\
	&\lesssim \sigmax^2 (d + \log N ) \, .
	\end{align*}
	Putting the three pieces together, we have
	\begin{align*}
	\norm{\error}_{\infty} &\lesssim \sigmax\mumax N (\log N)^{1/2} + \sigmax^2 (\sqrt{d} N \log N+ d)\, .
	\end{align*}
	Dividing by $N$ proves the lemma.
\end{proof}	

The following lemma is a standard result in measure concentration. We collect it here for completeness. 
\begin{lemma}
	\label{lem:sub_exp_inner_prod}
	Let $X, Y \in \R^d$ be independent sub-Gaussian random vectors with $\max\{\norm{X}_{\psi_2}, \norm{Y}_{\psi_2}\} \leq \sigmax$. Then $\langle X, Y \rangle$ is sub-exponential and $\|\langle Y, X \rangle\|_{\psi_1} \leq C \sqrt{d}\sigmax^2$ for some absolute constant $C$.
\end{lemma}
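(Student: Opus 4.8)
The plan is to condition on one of the two vectors, reduce the problem to a scalar sub-Gaussian tail, and then integrate out the Euclidean norm of the conditioning vector. Throughout I will use the standard equivalence that, up to absolute constants, $\norm{Z}_{\psi_1}\lesssim K$ holds if and only if $\Prob(|Z|>s)\leq C\exp(-s/K)$ for all $s\geq 0$ with an absolute constant $C$; thus it suffices to establish the sub-exponential tail bound $\Prob(|\langle X,Y\rangle|>s)\lesssim \exp(-cs/(\sqrt{d}\,\sigmax^2))$. Writing $K=\sqrt{d}\,\sigmax^2$, I first fix $Y=y$ and invoke the definition of the sub-Gaussian vector norm: for any unit vector $u$ the scalar $\langle X,u\rangle$ satisfies $\norm{\langle X,u\rangle}_{\psi_2}\leq\sigmax$, so $\langle X,y\rangle$ has $\psi_2$-norm at most $\sigmax\norm{y}_2$, and the $\psi_2$ tail estimate gives $\Prob(|\langle X,y\rangle|>s)\leq 2\exp(-cs^2/(\sigmax^2\norm{y}_2^2))$. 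Since $X$ and $Y$ are independent, integrating over $Y$ yields
\begin{align*}
\Prob(|\langle X,Y\rangle|>s)\leq \Ex_Y\Big[2\exp\big(-cs^2/(\sigmax^2\norm{Y}_2^2)\big)\Big].
\end{align*}

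The next step is to control the randomness of $\norm{Y}_2$. Because $\norm{Y}_2=\max_{u\in S^{d-1}}\langle Y,u\rangle$ and each $\langle Y,u\rangle$ is sub-Gaussian with parameter $\sigmax$, I would take a $1/2$-net $\mathcal N$ of $S^{d-1}$ with $|\mathcal N|\leq 5^d$, use $\norm{Y}_2\leq 2\max_{u\in\mathcal N}\langle Y,u\rangle$, and apply a union bound over $\mathcal N$ to obtain $\Prob\big(\norm{Y}_2>\sigmax(C_0\sqrt{d}+t)\big)\leq 2\exp(-ct^2)$ for all $t\geq 0$. Splitting the tail probability over the truncation event $\{\norm{Y}_2\leq \sigmax(C_0\sqrt{d}+t)\}$ and its complement then gives
\begin{align*}
\Prob(|\langle X,Y\rangle|>s)\leq 2\exp\!\Big(-\tfrac{cs^2}{\sigmax^4(C_0\sqrt{d}+t)^2}\Big)+2\exp(-ct^2).
\end{align*}
I would balance the two terms by choosing $t\asymp s^{1/2}/(d^{1/4}\sigmax)$. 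For $K\leq s\leq dK$ this choice keeps $t\lesssim\sqrt d$, so the first exponent is $\gtrsim s^2/K^2\geq s/K$ and the second is $\gtrsim s/K$; for $s>dK$ one has $t\gtrsim\sqrt d$, so $(C_0\sqrt d+t)^2\asymp t^2\asymp s/K$ and a short computation shows the first exponent is again $\gtrsim s/K$ (gaining an extra factor $d\geq 1$), while the second remains $\gtrsim s/K$. Hence both terms are $\lesssim\exp(-cs/K)$ whenever $s\geq K$; for $s<K$ the bound $\Prob(\cdot)\leq 1\leq e\exp(-s/K)$ is trivial. Collecting the cases gives the sub-exponential tail for all $s$, and therefore $\norm{\langle Y,X\rangle}_{\psi_1}\leq C\sqrt d\,\sigmax^2$.

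The hard part will be the control of $\norm{Y}_2$, since the coordinates of $Y$ are \emph{not} assumed independent, so one cannot treat $\norm{Y}_2^2=\sum_i Y_i^2$ as a sum of independent sub-exponential terms; the covering/net argument on the sphere is what makes the bound go through at this level of generality. The other delicate point is the adaptive choice of the truncation level $t$: a single fixed truncation would only yield a sub-Gaussian tail on a high-probability event rather than a genuine sub-exponential tail, and it is precisely the balancing $t\asymp s^{1/2}/(d^{1/4}\sigmax)$ that produces the correct $\sqrt d$ scaling in the $\psi_1$ norm. Once these two ingredients are in place, the remaining steps are routine bookkeeping of absolute constants, and no mean-zero assumption on $X$ or $Y$ is needed because every estimate is driven directly by the $\psi_2$ tail characterization rather than by moment generating functions.
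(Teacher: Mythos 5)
Your proof is correct, but it follows a genuinely different route from the paper's. The paper works entirely with moment generating functions: it invokes the MGF characterization of sub-Gaussianity (Proposition 2.5.2 of Vershynin) to get $\Ex e^{\langle Y,a\rangle}\leq e^{C\sigmax^2\|a\|_2^2}$, conditions on $X$ to reduce to bounding $\Ex e^{C\sigmax^2\lambda^2\|X\|_2^2}$, and then evaluates that expectation exactly via the Gaussian integral identity $e^{r\|X\|^2/2}=(2\pi r)^{-d/2}\int e^{-\|a\|^2/(2r)+\langle a,X\rangle}\,da$, obtaining the MGF bound $(1-C\sigmax^4\lambda^2)^{-d/2}\leq e^{Cd\sigmax^4\lambda^2}$ for small $\lambda$ and concluding by the MGF criterion for sub-exponential variables. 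You instead work entirely at the level of tails: condition on $Y$, use the $\psi_2$ tail for $\langle X,y\rangle$, control $\|Y\|_2$ by a net-plus-union-bound, and then make the $s$-dependent truncation choice $t\asymp s^{1/2}/(d^{1/4}\sigmax)$ to balance the two error terms — your case analysis ($K\leq s\leq dK$ versus $s>dK$) checks out, and the adaptive truncation is indeed essential, since a fixed truncation level would not give a genuine $\psi_1$ bound. The trade-off: the paper's Gaussian-smoothing computation is shorter and has no case analysis, but its reliance on the MGF characterization implicitly requires the vectors to be centered (harmless in the application, where the rows of $H$ are mean zero, though the lemma as stated does not assume it); your tail-based argument needs no mean-zero hypothesis at all, at the cost of somewhat fiddlier bookkeeping. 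Both yield the same $\sqrt{d}\,\sigmax^2$ scaling.
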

\begin{proof}
We use $C$ to denote a generic constant, which may change line to line.  We proceed by bounding the moment generating function $\Ex e^{\lambda \langle Y, X \rangle}$.  
	Since $\|Y\|_{\psi_2} \leq \sigmax$, $\| \langle Y, u \rangle \|_{\psi_2} \leq \sigmax$ for any unit vector $u$. Thus by Proposition 2.5.2 in \cite{vershynin2018high}, for any unit vector $u$ we have
	\[ \Ex e^{\lambda\langle Y, u \rangle} \leq e^{C\sigmax^2\lambda^2} \quad \forall \lambda \, ,\]
	or equivalently,
	\begin{equation}
	\label{equ:subgauss_prop}
	\Ex e^{\langle Y, a \rangle} \leq e^{C\sigmax^2\|a\|^2} \quad \forall a \, ,
	\end{equation}
	and Inequality \ref{equ:subgauss_prop} holds for $X$ as well. Letting $a=\lambda X$ in the above, since $X$ and $Y$ are independent, we obtain
	\begin{equation}
	\label{equ:2}
	\Ex\, e^{\lambda \langle Y, X \rangle} \leq \Ex\, e^{C\sigmax^2\lambda^2\|X\|^2} \, .
	\end{equation}
	We have (deterministically)
	\begin{align*}
	e^{\frac{r\|X\|^2}{2}} &= \frac{1}{(2\pi r)^{d/2}} \int_{\R^d} e^{-\frac{\|a\|^2}{2r}+\langle a, X \rangle }\ da \, .
	\end{align*}
	Taking expectation,
	\begin{align*}
	\Ex e^{\frac{r\|X\|^2}{2}} &=\frac{1}{(2\pi r)^{d/2}} \int_{\R^d} e^{-\frac{\|a\|^2}{2r}}\Ex e^{\langle a, X \rangle }\ da \\
	&\leq \frac{1}{(2\pi r)^{d/2}} \int_{\R^d} e^{-\frac{\|a\|^2}{2r}+C\sigmax^2\|a\|^2 }\ da \qquad \text{(by Equation \ref{equ:subgauss_prop})} \\
	&= \frac{1}{(2\pi r)^{d/2}} \int_{\R^d} e^{-\|a\|^2\left(\frac{1}{2r}-C\sigmax^2\right) }\ da \\
	&= \frac{1}{(1-2rC\sigmax^2)^{d/2}}.
	\end{align*}
	We choose $r = 2C\sigmax^2\lambda^2$ in the previous calculation, so that we can bound the right hand side of Inequality \ref{equ:2} and obtain
	\begin{align*}
	\Ex\, e^{\lambda \langle Y, X \rangle} &\leq \Ex\, e^{C\sigmax^2\lambda^2\|X\|^2} \\
	&\leq \frac{1}{(1-2(2C\sigmax^2\lambda^2)C\sigmax^2)^{d/2}} \\
	&= \frac{1}{(1-C\sigmax^4\lambda^2)^{d/2}}.
	\end{align*}
 Now since $\frac{1}{1-x} \leq e^{2x}$ for $|x|\leq 0.8$, we have
	\begin{align*}
	\frac{1}{1-C\sigmax^4\lambda^2} &\leq e^{2C\sigmax^4\lambda^2} \\
	\Ex\, e^{\lambda \langle Y, X \rangle} \leq  \frac{1}{(1-C\sigmax^4\lambda^2)^{d/2}} &\leq e^{Cd\sigmax^4\lambda^2}.
	\end{align*}
	Since the above holds for all $\lambda^2 \leq \frac{1}{Cd\sigmax^2}$ (assuming $d>1$), by Property (e) of subexponential distributions in \cite{vershynin2018high}, $\langle Y, X \rangle$ is thus a subexponential random variable with subexponential norm $\|\langle Y, X \rangle\|_{\psi_1} \leq C \sqrt{d}\sigmax^2$. 
\end{proof}	

\subsection{Proof of Lemma \ref{lem:RMTCenteredSpectralControl}}

\begin{proof}
Since $J^2=J$ and $\norm{J}_2=1$, we have
\begin{align*}
\error - \mtr({\Sigma})J &= MH^{\T}J+JHM^{\T}+J(HH^{\T} - \mtr(\Sigma)I_N)J\\
\norm{\error-\mtr(\Sigma)J}_2 &\leq \norm{MH^{\T}}_2\norm{J}_2+\norm{J}_2\norm{HM^{\T}}_2+\norm{J}_2\norm{HH^{\T}-\mtr(\Sigma)I_N}_2\norm{J}_2 \\ 
&\leq 2\norm{MH^{\T}}_{2}+\norm{HH^{\T}-\mtr(\Sigma)I_N}_{2} \\
&= \text{(I)} + \text{(II)}.
\end{align*}
We now bound each term.\vspace{.2cm}\\
\noindent\textit{Bounding \text{(I)}}\vspace{.2cm} \\ 
We first note that $MH^{\T}$ has independent columns, since $(MH^{\T})_{\cdot i} = M\eta_i$. Secondly, we will show that Condition \ref{cond:CCP} guarantees that $\norm{M\eta_i}_{\psi_2} \leq \norm{M}_2 \sigmax$. The latter holds if $ \norm{\langle M\eta_i, u\rangle}_{\psi_2} \leq \norm{M}_2 \sigmax$ for all unit vectors $u$. Since $\norm{M}_2^{-1}\langle M\eta_i, u\rangle$ is a centered, 1-Lipschitz, convex function of $\eta_i$, by Condition \ref{cond:CCP},
\begin{align*}
\Prob\left( |\langle M\eta_i, u\rangle| > t\norm{M}_2\right) &\leq 2\exp(-t^2/\sigmax^2) \\
\implies \Prob\left( |\langle M\eta_i, u\rangle| > \widetilde{t}\right) &\leq 2\exp(-\widetilde{t}^2/\norm{M}_2^2\sigmax^2) \\
\implies \norm{\langle M\eta_i, u\rangle}_{\psi_2} &\leq \norm{M}_2 \sigmax \, .
\end{align*}
Thus standard random matrix results guarantee that $\norm{MH^{\T}}_{2} \lesssim \norm{M}_2 \sigmax \sqrt{N}$ with probability $1-O(N^{-1})$. Since $\norm{M}_2 \leq \norm{M}_F \leq \mumax\sqrt{N}$, one obtains
\begin{align*}
\norm{MH^{\T}}_{2} &\lesssim \sigmax\mumax N \, .
\end{align*} 
\noindent\textit{Bounding \text{(II)}}\vspace{.2cm} \\ 
First let $x \in S^{N-1}$ and consider
\begin{align*}
x^T (HH^T - \text{tr}(\Sigma)I_N) x &= \langle H^Tx, H^Tx \rangle - \text{tr}(\Sigma) = \norm{h}_2^2 -  \text{tr}(\Sigma) 
\end{align*}
where $h = H^Tx$. Note that since the left hand side is mean zero, the right hand side is also mean zero, so $\Ex(\norm{h}_2^2)=\text{tr}(\Sigma)$. By Lemma C.11 in \cite{kasiviswanathan2019restricted}, since the columns of $H^{\T}$ are independent vectors satisfying the convex concentration property with constant $\sigmax$, the random vector $h = H^Tx$ also satisfies the convex concentration property with constant $O(\sigmax)$. We can thus apply the following Hanson-Wright type inequality, which appears as Theorem 2.5 in \cite{adamczak2015note}. 
\begin{theorem}
	Let $h$ be a mean zero random vector in $\mathbb{R}^N$. If $h$ has the convex concentration property with constant $K$, then for any $N \times N$ matrix $A$ and every $t > 0$,
	 \begin{align*}
	\Prob\left( \left| h^{\T}A h  - \Ex(h^{\T}A h)\right|>t \right) &\leq 2 \exp\left(-\frac{1}{C}\min\left\{\frac{t^2}{2K^4 \norm{A}_{HS}^2} , \frac{t}{ K^2\norm{A}_{2}}\right\}\right)
	\end{align*}
	for some universal constant $C$.
\end{theorem}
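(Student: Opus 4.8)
The plan is to derive this Hanson--Wright type bound from the convex concentration property by a gradient-truncation argument, arranging things so that the Hilbert--Schmidt norm enters through the \emph{typical} size of the gradient $\nabla(h^\T A h)=2Ah$ rather than through a crude Lipschitz estimate. First I would make the standard reductions: replacing $A$ by $(A+A^\T)/2$ leaves $h^\T A h$ unchanged and increases neither $\norm{A}_{HS}$ nor $\norm{A}_2$, so $A$ may be taken symmetric; writing $A=A_+-A_-$ for its positive and negative parts and treating each separately, it suffices to prove the one-sided bound for $A\succeq 0$, in which case $g(x):=x^\T A x$ is convex. After rescaling so that $\norm{A}_2=1$, the target exponent becomes $\min\{t^2/(K^4\norm{A}_{HS}^2),\,t/K^2\}$, and I would prove the upper tail (the lower tail being symmetric).

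The key device is concentration of the gradient norm. Since $x\mapsto\norm{Ax}_2$ is convex and $\norm{A}_2$-Lipschitz, Condition \ref{cond:CCP} gives $\Prob(|\norm{Ah}_2-\Ex\norm{Ah}_2|>s)\le 2\exp(-s^2/(K^2\norm{A}_2^2))$, and because $\Ex\norm{Ah}_2\le(\mtr(A^2\Sigma))^{1/2}\le\norm{\Sigma}_2^{1/2}\norm{A}_{HS}\lesssim K\norm{A}_{HS}$ (using that each $\langle u,h\rangle$ is sub-Gaussian with parameter $\lesssim K$, hence $\norm{\Sigma}_2\lesssim K^2$ for $\Sigma=\Ex[hh^\T]$), the gradient $2Ah$ has Euclidean norm $\lesssim K\norm{A}_{HS}$ with high probability. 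I would then truncate $g$ at a level $M$ by passing to its inf-convolution $g_M(x):=\inf_y\{g(y)+M\norm{x-y}_2\}$, which is convex, globally $M$-Lipschitz, and agrees with $g$ wherever the local Lipschitz constant $2\norm{Ax}_2$ of $g$ is at most $M$; in particular $g_M(h)=g(h)$ on the event $\{\norm{Ah}_2\le M/2\}$, whose complement has probability at most $2\exp(-(M/2-cK\norm{A}_{HS})^2/K^2)$.

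Applying Condition \ref{cond:CCP} to the $M$-Lipschitz convex function $g_M$ yields $\Prob(|g_M(h)-\Ex g_M(h)|>u)\le 2\exp(-u^2/(K^2M^2))$. Replacing $g_M$ by $g$ and $\Ex g_M$ by $\Ex g=\mtr(A\Sigma)$ costs only the probability of the truncation event together with a controllable bias $|\Ex g-\Ex g_M|$, after which a union bound gives, for every $M\ge cK\norm{A}_{HS}$, a tail of order $\exp(-u^2/(K^2M^2))+\exp(-(M/2-cK\norm{A}_{HS})^2/K^2)$ for $h^\T A h-\Ex h^\T A h$. Optimizing in $M$ produces exactly the two regimes: for small $t$ one keeps $M\asymp K\norm{A}_{HS}$ and recovers the sub-Gaussian exponent $t^2/(K^4\norm{A}_{HS}^2)$, while for large $t$ the balancing choice $M\asymp\sqrt{t}$ yields the sub-exponential exponent $t/K^2$ (that is, $t/(K^2\norm{A}_2)$ before rescaling). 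Summing the contributions of $A_+$ and $A_-$ then completes the proof.

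The main obstacle is making the truncation step fully rigorous: one must verify that $g_M$ genuinely coincides with the quadratic form on the gradient-bounded event and, more delicately, bound the bias $|\Ex g-\Ex g_M|$, since $g-g_M$ is supported on the low-probability set $\{\norm{Ah}_2>M/2\}$ yet can be large there. Controlling this bias requires pairing the (sub-Gaussian) tail of $\norm{Ah}_2$ against the quadratic growth of $g$, e.g.\ via $\Ex(g-g_M)\le\Ex[g\,\mathbf{1}\{\norm{Ah}_2>M/2\}]$ followed by Cauchy--Schwarz or an integration-by-parts estimate, and it is precisely here that the interplay between $\norm{A}_{HS}$ and $\norm{A}_2$ must be tracked carefully so that neither exponent is degraded.
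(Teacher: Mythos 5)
You are supplying a proof for a statement the paper itself never proves: this inequality is quoted verbatim (inside the proof of Lemma \ref{lem:RMTCenteredSpectralControl}) as Theorem 2.5 of \cite{adamczak2015note}, so there is no in-paper argument to compare against and your proposal must stand on its own. Your central mechanism is the right one, and it is the genuinely nontrivial part of the result: truncating the quadratic form at the level of its \emph{gradient}, via the inf-convolution $g_M(x)=\inf_y\{g(y)+M\norm{x-y}_2\}$, is exactly what allows $\norm{A}_{HS}$ to appear, since $\Ex\norm{Ah}_2\lesssim K\norm{A}_{HS}$; truncations based on $\norm{h}_2$ or $\norm{A^{1/2}h}_2$ instead yield the weaker quantity $\mtr(A)\norm{A}_2\ge\norm{A}_{HS}^2$ in the sub-Gaussian exponent. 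The reductions (symmetrization, $A=A_+-A_-$, rescaling), the concentration of $\norm{Ah}_2$, the identity $g_M=g$ on $\{2\norm{Ah}_2\le M\}$, the application of Condition \ref{cond:CCP} to $g_M/M$, and the two-regime optimization over $M$ are all correct.

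The genuine gap is the bias $\Ex g-\Ex g_M$: the control you propose for it fails, and not as a technicality. Cauchy--Schwarz gives $\Ex[g\,\mathbf{1}\{\norm{Ah}_2>M/2\}]\le(\Ex g^2)^{1/2}\,\Prob(\norm{Ah}_2>M/2)^{1/2}$, and $(\Ex g^2)^{1/2}\ge\Ex g=\mtr(A\Sigma)$, a dimension-dependent quantity. Concretely, take $h\sim N(0,I_N)$ (which satisfies Condition \ref{cond:CCP} with $K\asymp1$) and $A=\mathrm{diag}(1,N^{-1/2},\dots,N^{-1/2})$: then $\norm{A}_2=1$ and $\norm{A}_{HS}\asymp1$, but $\mtr(A\Sigma)\asymp\sqrt N$, so your bias bound reads $\sqrt N\,e^{-cM^2}$. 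Since the claimed inequality is informative only for $t\gtrsim K^2\norm{A}_{HS}\asymp1$, forcing the bias below $t/2$ requires $M\gtrsim\sqrt{\log N}$, and then the exponent delivered by the Lipschitz step, $t^2/(K^2M^2)$, degrades to $t^2/\log N$ rather than the dimension-free $t^2/(K^4\norm{A}_{HS}^2)\asymp t^2$; the same loss occurs with the $\norm{h}_2$-based pointwise bounds on $g-g_M$, because $\Ex\norm{h}_2^2=\mtr(\Sigma)\asymp K^2N$. (In these examples the \emph{true} bias is exponentially small---only the coordinates of $h$ responsible for the gradient excess enter $g-g_M$---but no Cauchy--Schwarz or H\"older pairing of a pointwise bound against the rarity of the event can see this.) The standard repair is to never compare $\Ex g_M$ with $\Ex g$ inside the truncation step. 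Since $g\ge g_M$ pointwise and $\Prob(g\ne g_M)\le 1/4$ for admissible $M$, one has $\mathrm{med}(g_M)\le\mathrm{med}(g)\le q_{3/4}(g_M)$, and both outer quantities lie within $CKM$ of $\Ex g_M$ by the concentration of $g_M$; hence $\Prob\bigl(|g-\mathrm{med}(g)|>t\bigr)\le\Prob(g\ne g_M)+\Prob\bigl(|g_M-\Ex g_M|>t-CKM\bigr)$, with no bias term and with a center, $\mathrm{med}(g)$, that does not depend on $M$. Optimizing over $M$ exactly as you do gives the two-regime tail around $\mathrm{med}(g)$; integrating that tail yields $|\Ex g-\mathrm{med}(g)|\lesssim K^2\norm{A}_{HS}$, and this shift of center is absorbed into the universal constant $C$ because the stated bound is vacuous for $t\lesssim K^2\norm{A}_{HS}$. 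With this re-centering your argument closes; as written, it proves only a $\log N$-deflated version of the inequality.
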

Applying the above theorem with $A=I_d$, $K=O(\sigmax)$ gives
\begin{align*}
	\Prob\left( \left|  \norm{h}_2^2 - \Ex \norm{h}_2^2 \right|>t \right) &\leq 2 \exp\left(-\frac{1}{C}\min\left\{\frac{t^2}{2d\sigmax^4} , \frac{t}{ \sigmax^2}\right\}\right) \, .
\end{align*}
Thus when $t \lesssim d\sigmax^2$, one obtains
\begin{align*}
\Prob(| \norm{h}_2^2 - \Ex \norm{h}_2^2| > t) \leq 2\exp(-ct/ \sigmax^2)\\
\Prob(| x^T (HH^T - \text{tr}(\Sigma)I_N) x| > t) \leq 2\exp(-ct/ \sigmax^2)
\end{align*}
for some constant $c$.  Similarly, when $t\gtrsim d\sigmax^2$ one obtains
\begin{align*}
\Prob(| \norm{h}_2^2 - \Ex \norm{h}_2^2| > t) \leq 2\exp(-ct^2/(\sqrt{d}\sigmax^2)^2),\\
\Prob(| x^T (HH^T - \text{tr}(\Sigma)I_N) x| > t) \leq 2\exp(-ct^2/( \sqrt{d}\sigmax^2)^2).  
\end{align*}
We now bound $\norm{HH^{\T}-\mtr(\Sigma)I_N}_{2}$ with a covering argument. Let $\mathcal{N}$ denote an $\epsilon$ net of $S^{N-1}$. Since $HH^T - \text{tr}(\Sigma)I_N$ is symmetric, we have (see Example 4.4.3 in \cite{vershynin2018high})
\begin{align*}
\norm{HH^T - \text{tr}(\Sigma)I_N}_2 &\leq \frac{1}{1-2\epsilon}\,  \sup_{x\in \mathcal{N}} \, |x^T(HH^T - \text{tr}(\Sigma)I_N)x| \quad,\quad \epsilon \in [0,1/2) \, .
\end{align*}
Also by Corollary 4.2.13 in \cite{vershynin2018high}, we have $|\mathcal{N}| \leq \left(\frac{2}{\epsilon}+1\right)^{N}$. Choosing for example $\epsilon = 1/4$ so that $|\mathcal{N}| \leq 9^N$, we have
\begin{align*}
\norm{HH^T - \text{tr}(\Sigma)I_N}_2 &\leq 2 \sup_{x \in \mathcal{N}}|x^T(HH^T - \text{tr}(\Sigma)I_N)x|
\end{align*}
If $t\gtrsim  d\sigmax^2$, a union bound yields that
\begin{align*}
\Prob\left( \sup_{x \in \mathcal{N}}| x^T (HH^T - \text{tr}(\Sigma)I_N) x| > t\right) \leq (9^N)2\exp(-ct/(\sigmax^2)) \\
\Prob\left(\norm{HH^T - \text{tr}(\Sigma)I_N}_2> 2t\right) \leq (9^N)2\exp(-ct/(\sigmax^2)).
\end{align*}
If $t\lesssim  d\sigmax^2$, a union bound yields
\begin{align*}
\Prob\left( \sup_{x \in \mathcal{N}}| x^T (HH^T - \text{tr}(\Sigma)I_N) x| > t\right) \leq (9^N)2\exp(-ct^2/(\sqrt{d}\sigmax^2)^2) \\
\Prob\left(\norm{HH^T - \text{tr}(\Sigma)I_N}_2> 2t\right) \leq (9^N)2\exp(-ct^2/(\sqrt{d}\sigmax^2)^2).
\end{align*}
When $N\geq d$, taking  $t \sim N\sigmax^2$ yields
\begin{align*}
\norm{HH^T - \text{tr}(\Sigma)I_N}_2 &\lesssim N\sigmax^2
\end{align*}
with high probability at least $1-O(N^{-1})$. 
When $N \leq d$, taking $t \sim \sqrt{Nd}\sigmax^2$ yields
\begin{align*}
\norm{HH^T - \text{tr}(\Sigma)I_N}_2 &\lesssim \sqrt{Nd}\sigmax^2
\end{align*}
with high probability at least $1-O(N^{-1})$. 
Therefore with high probability 
\begin{align*}
\norm{HH^T - \text{tr}(\Sigma)I_N}_2 &\lesssim \sigmax^2 \sqrt{N} (\sqrt{d}\vee \sqrt{N})\, .
\end{align*}
In sum, we have
\begin{align*}
\text{(I)} \lesssim \sigmax\mu_{\max}N, \quad \text{(II)} \lesssim \sigmax^2N^{1/2}\left(d^{1/2} \vee N^{1/2}\right).
\end{align*}
We can thus bound the spectral norm of $\{\error-\mtr(\Sigma)J\}/N$ by
\begin{align*}
\frac{\norm{\error-\mtr(\Sigma)J}_2}{N} &\lesssim \sigmax\mu_{\text{max}} + \sigmax^2\left(\gamma^{1/2} \vee 1\right).
\end{align*}
\end{proof}

\section{Eigenspace Perturbation Results}\label{app:EigPertResults}
{This appendix collects some known results for bounding eigenspace perturbations.} For {Theorems \ref{thm:LowRankInfinityEigPert} and \ref{thm:DavisKahan}}, $A, \widetilde{A} \in \mathbb{R}^{N\times N}$ are {real,} symmetric with eigenvalues $\lambda_1\geq \ldots \geq \lambda_N$, $\widetilde{\lambda}_1 \geq \ldots \geq \widetilde{\lambda}_N$ and associated eigenvectors $v_1,\ldots,v_N$ and $\widetilde{v}_1,\ldots,\widetilde{v}_N$ respectively. {Theorem} \ref{thm:LowRankInfinityEigPert}  is a recent result by \cite{fan2018eigenvector}.
\begin{theorem}
	\label{thm:LowRankInfinityEigPert}
	Let $A_r = \sum_{i=1}^r \lambda_i v_iv_i^{\T}$ and $\kappa = (N/r) \max_i \sum_{j=1}^r v^2_{j}(i)$. Suppose $|\lambda_r|-\epsilon = \Omega(r^3\kappa^2\norm{A - \widetilde{A}}_{\infty})$, where $\epsilon = \norm{A - A_r}_{\infty}$. Then, there exists an orthogonal matrix $R \in \mathbb{R}^{r\times r}$ such that
	\[ 
	\norm{\widetilde{V}_rR-V_r}_{\textnormal{max}} = O\left\{\frac{r^{5/2}\kappa^2\norm{A - \widetilde{A}}_{\infty}}{(|\lambda_r|-\epsilon)N^{1/2}}\right\} , 
	\]
	where $V_r=(v_1,\ldots,v_r),\widetilde{V}_r=(\widetilde{v}_1,\ldots\widetilde{v}_r)$.
\end{theorem}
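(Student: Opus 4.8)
The plan is to prove this as an entrywise eigenvector perturbation bound by setting up a self-consistent (fixed-point) inequality for the max-norm error, using the incoherence parameter $\kappa$ to upgrade operator-norm control into entrywise control. Write $E := \widetilde A - A$ for the perturbation and split $A = A_r + (A - A_r)$ into the rank-$r$ signal $A_r = V_r\Lambda_r V_r^{\T}$ and the spectral tail $A - A_r$, whose $\infty$-norm is exactly $\epsilon$. The design principle behind the stated bound is that the tail should only shrink the \emph{effective} spectral separation, producing the denominator $|\lambda_r|-\epsilon$, whereas the genuine perturbation $E$ should drive the numerator. Accordingly the two are tracked through different inequalities rather than being lumped together as $\norm{\widetilde A - A_r}_\infty$.

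First I would fix $R$ as the orthogonal (Procrustes) factor from the polar decomposition of $V_r^{\T}\widetilde V_r$, which minimizes $\norm{\widetilde V_r R - V_r}_{\mathrm F}$. Since $E$ is symmetric, $\norm{E}_2 \le (\norm{E}_1\norm{E}_\infty)^{1/2} = \norm{E}_\infty$, so the Davis--Kahan $\sin\Theta$ theorem (Theorem \ref{thm:DavisKahan}) with the effective gap gives the crude bound $\norm{\widetilde V_r R - V_r}_2 \lesssim \norm{E}_\infty/(|\lambda_r|-\epsilon)$, and the Procrustes choice makes the in-subspace residual $\norm{V_r^{\T}\widetilde V_r R - I}_2$ second order, of size $(\norm{E}_\infty/(|\lambda_r|-\epsilon))^2$. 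The heart of the argument is the out-of-subspace component. Writing $P_r = V_r V_r^{\T}$ and $\widetilde W = (I-P_r)\widetilde V_r$, the eigenvector identity $\widetilde V_r = \widetilde A\widetilde V_r\widetilde\Lambda_r^{-1} = (A+E)\widetilde V_r\widetilde\Lambda_r^{-1}$, combined with the commutation $AP_r = P_rA$ and the annihilation $A_r\widetilde W = 0$, yields the Sylvester-type relation $(A-A_r)\widetilde W - \widetilde W\widetilde\Lambda_r = -(I-P_r)E\widetilde V_r$, equivalently $\widetilde W = [(A-A_r)\widetilde W + (I-P_r)E\widetilde V_r]\widetilde\Lambda_r^{-1}$.

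Taking max-norms and using $\norm{(A-A_r)Z}_{\max}\le\epsilon\norm{Z}_{\max}$ together with $\widetilde\lambda_r \ge \lambda_r - \norm{E}_2$ (Weyl), the tail term is absorbed into the left side, which is precisely what installs the effective denominator: $\norm{\widetilde W}_{\max} \lesssim \norm{(I-P_r)E\widetilde V_r}_{\max}/(|\lambda_r|-\epsilon)$. The driving term is split through $(I-P_r)=I-V_rV_r^{\T}$, so that the bare part obeys $\norm{E\widetilde V_r}_{\max}\le\norm{E}_\infty\norm{\widetilde V_r}_{\max}$ while the projected part obeys $\norm{V_r(V_r^{\T}E\widetilde V_r)}_{\max}\le\norm{V_r}_{\max}\sqrt r\,\norm{E}_2$, invoking the delocalization estimate $\norm{V_r}_{\max}\le(\kappa r/N)^{1/2}$. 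Since $\norm{\widetilde V_r}_{\max}\lesssim\norm{V_r}_{\max}+\norm{\widetilde V_r R - V_r}_{\max}$ and, assembling the in- and out-of-subspace parts, $\norm{\widetilde V_r R - V_r}_{\max}\lesssim\sqrt r\,\norm{\widetilde W}_{\max}+(\text{second order})$, these feed back into a self-referential inequality of the form
\[
\norm{\widetilde V_r R - V_r}_{\max} \;\le\; C\,\frac{r^{5/2}\kappa^2\norm{E}_\infty}{(|\lambda_r|-\epsilon)N^{1/2}} \;+\; \theta\,\norm{\widetilde V_r R - V_r}_{\max}, \qquad \theta \lesssim \frac{r^{3}\kappa^2\norm{E}_\infty}{|\lambda_r|-\epsilon},
\]
where the exponents in $\theta$ reflect the $r$ and $\kappa$ factors accumulated from the coherence bounds. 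The hypothesis $|\lambda_r|-\epsilon = \Omega(r^3\kappa^2\norm{E}_\infty)$ then forces $\theta<1/2$, so the recursive term is absorbed and the claimed bound follows.

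The main obstacle is the bootstrap itself: making the fixed-point map genuinely contractive while correctly accumulating the powers of $r$ and $\kappa$. Two points demand care. First, one must verify that the Procrustes alignment renders $V_r^{\T}\widetilde V_r R - I$ truly second order, so that the in-subspace contribution is dominated by, rather than competes with, the leading term; this rests on the squared $\sin\Theta$ estimate. Second, the tail must be prevented from acting as a \emph{multiplicative} factor on the unknown $\norm{\widetilde V_r R - V_r}_{\max}$; it is exactly the commutation $AP_r=P_rA$ and the annihilation $A_r\widetilde W=0$ that relegate $\epsilon$ to the denominator $|\lambda_r|-\epsilon$ rather than the numerator. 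Getting both the coherence bookkeeping and this tail-versus-perturbation separation right is where essentially all the difficulty lies; the remaining estimates are routine applications of Weyl's inequality and the submultiplicativity relations $\norm{BD}_{\max}\le\norm{B}_{\max}\norm{D}_1$ and $\norm{MZ}_{\max}\le\norm{M}_\infty\norm{Z}_{\max}$.
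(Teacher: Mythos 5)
You cannot really be compared against the paper's own proof here, because the paper has none: Theorem \ref{thm:LowRankInfinityEigPert} is quoted in Appendix \ref{app:EigPertResults} as an external result of \cite{fan2018eigenvector} and is invoked as a black box in the proof of Theorem \ref{thm:eigenvec_pert}. Judged on its own merits, your argument is essentially sound, and it is a genuinely different (more elementary and self-contained) route than deferring to the literature. The key algebra checks out: $P_rA = A_r$ and $(A-A_r)P_r = 0$ do yield $\widetilde{W}\widetilde{\Lambda}_r = (A-A_r)\widetilde{W} + (I-P_r)E\widetilde{V}_r$; since right-multiplication by $\widetilde{\Lambda}_r$ rescales columns, $\|\widetilde{W}\widetilde{\Lambda}_r\|_{\max}\ge \widetilde{\lambda}_r\|\widetilde{W}\|_{\max}$, so the tail is absorbed as $(\widetilde{\lambda}_r-\epsilon)\|\widetilde{W}\|_{\max}\le\|(I-P_r)E\widetilde{V}_r\|_{\max}$, with Weyl plus the gap hypothesis giving $\widetilde{\lambda}_r-\epsilon\gtrsim|\lambda_r|-\epsilon$; and for the Procrustes factor $R$ one indeed has $\|V_r^\T\widetilde{V}_rR-I\|_2=\max_i(1-\cos\theta_i)\le\|\sin\Theta(\widetilde{V}_r,V_r)\|_2^2$, so the in-subspace part is second order. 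Your bookkeeping is loose but conservative: tracked carefully, the contraction coefficient is of order $r\|E\|_\infty/(|\lambda_r|-\epsilon)$ and the driving term of order $r^{3/2}\kappa^{1/2}\|E\|_\infty/\{(|\lambda_r|-\epsilon)N^{1/2}\}$, both comfortably inside the stated $r^3\kappa^2$ and $r^{5/2}\kappa^2$ budgets. Three points should be made explicit in a full write-up. First, inverting $\widetilde{\Lambda}_r$ and using $\widetilde{\lambda}_j\ge\widetilde{\lambda}_r>0$ presumes $\lambda_r>0$; note that for $r<N$ this is actually forced by the hypothesis, since $\epsilon=\|A-A_r\|_\infty\ge\|A-A_r\|_2\ge|\lambda_{r+1}|$, so $|\lambda_r|>\epsilon$ rules out $\lambda_r<0$. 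Second, rotating a max-norm costs a factor $\sqrt{r}$, i.e. $\|\widetilde{V}_r\|_{\max}\le\sqrt{r}\,\|\widetilde{V}_rR\|_{\max}$, which you elide when writing $\|\widetilde{V}_r\|_{\max}\lesssim\|V_r\|_{\max}+\|\widetilde{V}_rR-V_r\|_{\max}$; harmless, but it shifts your exponents. Third, Theorem \ref{thm:DavisKahan} produces some orthogonal $R'$, not your Procrustes $R$; you need Frobenius-optimality of the Procrustes choice to transfer that bound, and the final absorption $\Delta\le a+\theta\Delta\Rightarrow\Delta\le a/(1-\theta)$ should be accompanied by the remark that $\Delta\le 1+\sqrt{r}$ is finite a priori, so there is no circularity. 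With these routine repairs your sketch is a valid proof of the quoted bound (indeed with smaller powers of $r$ and $\kappa$), at the price of not recovering the full generality or constants of \cite{fan2018eigenvector}; for the paper's application this costs nothing, since there $A=MM^\T$ is positive semi-definite.
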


{Theorem \ref{thm:DavisKahan}}  is a variation of the standard Davis-Kahan Theorem developed by \citet{yu2014useful}. 
\begin{theorem}
	\label{thm:DavisKahan}
	Fix $1\leq m\leq s \leq N$ and assume that $\min(\lambda_{m-1}-\lambda_m,\lambda_s-\lambda_{s+1})>0$, where we define $\lambda_0 = \infty$ and $\lambda_{N+1}=-\infty$. Let $r=s-m+1$, and $V_r = (v_m,v_{m+1},\ldots,v_s) \in \mathbb{R}^{N \times r}$, $\widetilde{V}_r =( \widetilde{v}_m,\widetilde{v}_{m+1},\ldots,\widetilde{v}_s) \in \mathbb{R}^{N \times r}$. We have
	\begin{align*}
	\norm{\sin \Theta(\widetilde{V}_r,V_r)}_\textnormal{F} &\leq \frac{2\min(r^{1/2}\norm{\widetilde{A}-A}_2, \norm{\widetilde{A}-A}_\textnormal{F})}{\min(\lambda_{m-1}-\lambda_m, \lambda_s-\lambda_{s+1})},
	\end{align*}
	where $\Theta(\widetilde{V}_r,V_r)$ denotes the $r$ by $r$ diagonal matrix whose $j^{\text{th}}$ diagonal entry is the $j^{\text{th}}$ principal angle between $\widetilde{V}_r,V_r$, and $\sin \Theta(\widetilde{V}_r,V_r)$ is defined entry-wise.
	Moreover, there exists an orthogonal matrix $R \in \mathbb{R}^{N \times N}$ such that
	\begin{align*}
	\norm{\widetilde{V}_rR-V_r}_\textnormal{F} \leq \frac{2^{3/2}\min(r^{1/2}\norm{\widetilde{A}-A}_2, \norm{\widetilde{A}-A}_\textnormal{F})}{\min(\lambda_{m-1}-\lambda_m, \lambda_s-\lambda_{s+1})}.
	\end{align*}
\end{theorem}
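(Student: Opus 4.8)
The plan is to prove the statement in two stages, following the strategy of \cite{yu2014useful}: first I would bound the canonical (principal) angles between the two eigenspaces in the Frobenius norm, and then I would convert that bound into a bound on the distance to the nearest orthogonal alignment $\widetilde{V}_rR-V_r$. Throughout write $W=\widetilde{A}-A$ and $\delta=\min(\lambda_{m-1}-\lambda_m,\lambda_s-\lambda_{s+1})$, and let $V_\perp,\widetilde{V}_\perp$ collect the remaining eigenvectors of $A,\widetilde{A}$, with $\Lambda_r,\widetilde{\Lambda}_\perp$ the associated eigenvalue matrices. Note that the block dimension is $r=s-m+1$, so $\norm{V_r}_\textnormal{F}=r^{1/2}$.

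For the first stage I would start from the identity $\norm{\sin\Theta(\widetilde{V}_r,V_r)}_\textnormal{F}=\norm{\widetilde{V}_\perp^\T V_r}_\textnormal{F}$, since the sines of the principal angles are exactly the singular values of $\widetilde{V}_\perp^\T V_r$. Setting $G=\widetilde{V}_\perp^\T V_r$ and using $AV_r=V_r\Lambda_r$ together with $\widetilde{V}_\perp^\T\widetilde{A}=\widetilde{\Lambda}_\perp\widetilde{V}_\perp^\T$, I obtain the Sylvester identity $\widetilde{\Lambda}_\perp G-G\Lambda_r=\widetilde{V}_\perp^\T W V_r$, so that entrywise $(\widetilde{\lambda}_{\perp,i}-\lambda_{r,j})G_{ij}=(\widetilde{V}_\perp^\T W V_r)_{ij}$. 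The numerator is controlled by the residual: since $\widetilde{V}_\perp^\T$ is a partial isometry and $V_r$ has orthonormal columns, $\norm{\widetilde{V}_\perp^\T W V_r}_\textnormal{F}\leq\norm{WV_r}_\textnormal{F}\leq\min(r^{1/2}\norm{W}_2,\norm{W}_\textnormal{F})$, which is exactly the numerator in the statement. For the denominator, each $\lambda_{r,j}$ lies in the band $[\lambda_s,\lambda_m]$ while each $\widetilde{\lambda}_{\perp,i}$ is a complementary eigenvalue of $\widetilde{A}$; when $\norm{W}_2<\delta/2$, Weyl's inequality gives $|\widetilde{\lambda}_{\perp,i}-\lambda_{r,j}|\geq\delta-\norm{W}_2>\delta/2$ for every such pair, whence $\norm{\sin\Theta(\widetilde{V}_r,V_r)}_\textnormal{F}\leq 2\norm{WV_r}_\textnormal{F}/\delta$.

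The remaining large-perturbation regime $\norm{W}_2\geq\delta/2$ is where I expect the main obstacle, and it is precisely the difficulty the \cite{yu2014useful} refinement is designed to overcome. Here the Sylvester denominators can degenerate, so the entrywise inversion above is unavailable, and one must instead argue directly that the total rotation still cannot exceed $2\norm{WV_r}_\textnormal{F}/\delta$. For the operator-norm branch this is clean, since the a priori bound $\norm{\sin\Theta}_\textnormal{F}\leq r^{1/2}$ combined with $\norm{W}_2\geq\delta/2$ already gives $\norm{\sin\Theta}_\textnormal{F}\leq r^{1/2}\leq 2r^{1/2}\norm{W}_2/\delta$; the Frobenius branch requires the more careful estimate supplied by their argument. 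Combining the two regimes yields the unconditional bound $\norm{\sin\Theta(\widetilde{V}_r,V_r)}_\textnormal{F}\leq 2\min(r^{1/2}\norm{W}_2,\norm{W}_\textnormal{F})/\delta$, with the factor of $2$ absorbing the possibility of an arbitrarily large perturbation.

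For the second stage I would solve the orthogonal Procrustes problem $\min_R\norm{\widetilde{V}_rR-V_r}_\textnormal{F}$ over $r\times r$ orthogonal $R$. Writing the singular value decomposition $V_r^\T\widetilde{V}_r=\Phi(\cos\Theta)\Psi^\T$ and taking $R=\Psi\Phi^\T$ gives
\[
\norm{\widetilde{V}_rR-V_r}_\textnormal{F}^2=2r-2\,\mtr(\cos\Theta)=2\sum_j(1-\cos\theta_j).
\]
Since $1-\cos\theta_j\leq 1-\cos^2\theta_j=\sin^2\theta_j$ for principal angles $\theta_j\in[0,\pi/2]$, this is at most $2\norm{\sin\Theta(\widetilde{V}_r,V_r)}_\textnormal{F}^2$, so $\norm{\widetilde{V}_rR-V_r}_\textnormal{F}\leq 2^{1/2}\norm{\sin\Theta(\widetilde{V}_r,V_r)}_\textnormal{F}$. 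Multiplying this $2^{1/2}$ factor by the $\sin\Theta$ bound from the first stage produces the constant $2^{3/2}$ and completes the proof.
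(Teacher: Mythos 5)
Your proposal cannot be checked against a proof in the paper, because the paper does not prove Theorem \ref{thm:DavisKahan}: it is stated in an appendix of collected known results and attributed to \cite{yu2014useful}, so the only benchmark is that source. Measured on its own terms, two of your three steps are complete and correct. The Sylvester-plus-Weyl argument is valid in the regime $\norm{W}_2<\delta/2$ (every denominator $|\widetilde{\lambda}_{\perp,i}-\lambda_{r,j}|$ is at least $\delta-\norm{W}_2>\delta/2$, and $\norm{\widetilde{V}_\perp^\T WV_r}_\textnormal{F}\le\norm{WV_r}_\textnormal{F}\le\min(r^{1/2}\norm{W}_2,\norm{W}_\textnormal{F})$), and the Procrustes step is a clean derivation of the factor $2^{1/2}$ via $1-\cos\theta_j\le\sin^2\theta_j$, hence of the constant $2^{3/2}$. (Incidentally, the $R\in\mathbb{R}^{N\times N}$ in the statement is a typo for $r\times r$; your $r\times r$ solution is the intended object.)

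The genuine gap is exactly where you flag it and it is not a routine detail: the Frobenius branch in the regime $\norm{W}_2\ge\delta/2$. There you must show $\norm{\sin\Theta(\widetilde{V}_r,V_r)}_\textnormal{F}\le 2\norm{W}_\textnormal{F}/\delta$, but the only a priori facts available are $\norm{\sin\Theta}_\textnormal{F}\le r^{1/2}$ and $2\norm{W}_\textnormal{F}/\delta\ge 2\norm{W}_2/\delta\ge1$, which combine only when $r=1$; deferring to ``the more careful estimate supplied by their argument'' is a citation, not a proof, and this case is precisely the content that lets the theorem hold with no hypothesis relating $\norm{W}$ to the gap. Moreover the Weyl-based route cannot be repaired, since Weyl gives no spectral separation once $\norm{W}_2\ge\delta/2$. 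The standard way to close the gap avoids the case split altogether: let $A'$ share the eigenvectors of $\widetilde{A}$ but with clamped eigenvalues, $\lambda'_j=\max(\widetilde{\lambda}_j,\lambda_{m-1})$ for $j<m$, $\lambda'_j=\min\{\max(\widetilde{\lambda}_j,\lambda_s),\lambda_m\}$ for $m\le j\le s$, and $\lambda'_j=\min(\widetilde{\lambda}_j,\lambda_{s+1})$ for $j>s$. Then $\widetilde{V}_r$ is still the eigenblock of $A'$ with eigenvalues in $[\lambda_s,\lambda_m]$, while the complementary eigenvalues of $A'$ lie outside $(\lambda_s-\delta,\lambda_m+\delta)$, so your own Sylvester identity applied to the pair $(A,A')$ has all denominators at least $\delta$ and yields $\norm{\sin\Theta(\widetilde{V}_r,V_r)}_\textnormal{F}\le\norm{A'-A}_\textnormal{F}/\delta$; and since $|\lambda'_j-\widetilde{\lambda}_j|\le|\lambda_j-\widetilde{\lambda}_j|$ for every $j$, the Hoffman--Wielandt inequality gives $\norm{A'-\widetilde{A}}_\textnormal{F}^2=\sum_j(\lambda'_j-\widetilde{\lambda}_j)^2\le\sum_j(\lambda_j-\widetilde{\lambda}_j)^2\le\norm{W}_\textnormal{F}^2$, whence $\norm{A'-A}_\textnormal{F}\le2\norm{W}_\textnormal{F}$. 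This proves the Frobenius branch unconditionally, and replacing Hoffman--Wielandt by Weyl in the same construction also reproves your operator branch without any case analysis.
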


{Finally, Theorem \ref{thm:EigspacePertBhatia} appears as Theorem VII.3.2 in \cite{bhatia2013matrix}.} {We use  $\opnorm{\cdot}{}$ to denote a unitarily invariant matrix norm, i.e. one satisfying $\opnorm{UAV}{}=\opnorm{A}{}$ for all unitary matrices $U,V$. Let $P_A(S)$ denote the orthogonal projection onto the subspace spanned by the eigenvectors of $A$ with eigenvalues in $S_1$.}

\begin{theorem}
	\label{thm:EigspacePertBhatia}
	Let $A$ and $\widetilde{A}$ be Hermitian operators, and let $S_1, S_2$ be any two subsets of $\mathbb{R}$ such that $\dist(S_1,S_2) ={ \inf_{x \in S_1} \inf_{y \in S_2} |x - y| } =\delta>0$. Let $E=P_A(S_1), F=P_{\widetilde{A}}(S_2)$. Then, for every unitarily invariant norm, 
	\begin{align*}
	\opnorm{EF}{} &\leq \frac{\pi}{2\delta} \opnorm{E(A-\widetilde{A})F}{} \leq \frac{\pi}{2\delta} \opnorm{A-\widetilde{A}}{}.
	\end{align*}
\end{theorem}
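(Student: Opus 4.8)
The plan is to reduce the bound to a statement about a Sylvester (operator) equation and then invoke a sharp Fourier-multiplier estimate. First I would exploit that $E$ and $F$ are \emph{spectral} projections: since $E=P_A(S_1)$ commutes with $A$ and $F=P_{\widetilde A}(S_2)$ commutes with $\widetilde A$, one has $EAF=A(EF)$ and $E\widetilde A F=(EF)\widetilde A$, so that, writing $Y:=EF$ and $Z:=E(A-\widetilde A)F$,
\begin{equation*}
A Y - Y\widetilde A = E(A-\widetilde A)F = Z .
\end{equation*}
Because $E,F$ are idempotent, both $Y$ and $Z$ satisfy $Y=EYF$ and $Z=EZF$; hence in the joint spectral decomposition $A=\sum_i\lambda_iP_i$, $\widetilde A=\sum_j\mu_jQ_j$ only the ``cross'' blocks $P_iZQ_j$ with $\lambda_i\in S_1$, $\mu_j\in S_2$ survive, and for these $|\lambda_i-\mu_j|\ge\dist(S_1,S_2)=\delta$.

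Next I would solve this equation by an integral representation. Fix a kernel $g\in L^1(\mathbb R)$ whose Fourier transform inverts $\sigma\mapsto\sigma$ across the gap, i.e. $\int_{-\infty}^{\infty}g(t)e^{i\sigma t}\,dt=\sigma^{-1}$ for every $|\sigma|\ge\delta$, and set
\begin{equation*}
\widehat Y:=\int_{-\infty}^{\infty} g(t)\,e^{itA}\,Z\,e^{-it\widetilde A}\,dt .
\end{equation*}
Since $e^{itA}$ commutes with $E$ and $e^{-it\widetilde A}$ with $F$, the integrand stays in the subspace $\{X:X=EXF\}$. Expanding in the spectral bases gives $\widehat Y=\sum_{i,j}\bigl(\int g(t)e^{it(\lambda_i-\mu_j)}dt\bigr)P_iZQ_j=\sum_{i,j}(\lambda_i-\mu_j)^{-1}P_iZQ_j$, where the identity $\int g\,e^{it(\lambda_i-\mu_j)}=(\lambda_i-\mu_j)^{-1}$ is legitimate precisely because the surviving gaps satisfy $|\lambda_i-\mu_j|\ge\delta$. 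A one-line computation then shows $A\widehat Y-\widehat Y\widetilde A=Z$, and since on $\{X:X=EXF\}$ the Sylvester map acts by multiplication by the differences $\lambda_i-\mu_j$ (all of modulus $\ge\delta$, hence nonzero) it is invertible there; uniqueness therefore forces $\widehat Y=Y=EF$.

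The norm bound is now immediate from unitary invariance: $e^{itA}$ and $e^{-it\widetilde A}$ are unitary, so $\opnorm{e^{itA}Ze^{-it\widetilde A}}{}=\opnorm{Z}{}$ for every unitarily invariant norm, and the triangle inequality for the Bochner integral yields $\opnorm{EF}{}\le\bigl(\int|g(t)|\,dt\bigr)\opnorm{Z}{}$. Choosing $g$ to minimize $\|g\|_{1}$ subject to the interpolation constraint gives the first inequality with constant $\pi/(2\delta)$. The second inequality is the ideal property of unitarily invariant norms: $\opnorm{E(A-\widetilde A)F}{}\le\norm{E}_2\,\opnorm{A-\widetilde A}{}\,\norm{F}_2\le\opnorm{A-\widetilde A}{}$, since $E,F$ are orthogonal projections with operator norm at most $1$.

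The main obstacle is the sharp constant. Everything above works for \emph{any} admissible kernel $g$; what produces exactly $\pi/(2\delta)$ rather than a cruder $C/\delta$ is the extremal estimate $\min\{\|g\|_1:\ \widehat g(\sigma)=\sigma^{-1}\ \text{for}\ |\sigma|\ge\delta\}=\pi/(2\delta)$. By the rescaling $t\mapsto t/\delta$ this reduces to $\delta=1$ with optimal value $\pi/2$, a classical $L^1$-Fourier extremal problem: one must exhibit a kernel attaining $\pi/2$ and a matching lower bound showing no admissible kernel does better. This is the only genuinely delicate step; the reduction to the Sylvester equation and the passage from the operator bound to the projection bound are routine. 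The result is exactly Theorem VII.3.2 in \cite{bhatia2013matrix}, whose proof follows this route.
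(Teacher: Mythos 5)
Your proposal is correct and follows essentially the same route as the paper, which gives no proof of its own but quotes the statement as Theorem VII.3.2 of Bhatia's \emph{Matrix Analysis}: Bhatia's argument is exactly your reduction of $EF$ to the solution of the Sylvester equation $AY - Y\widetilde{A} = E(A-\widetilde{A})F$ on the block $\{X : X = EXF\}$, solved by the integral $\int g(t)\,e^{itA}\,E(A-\widetilde{A})F\,e^{-it\widetilde{A}}\,dt$ with an $L^1$ kernel satisfying $\widehat{g}(\sigma)=\sigma^{-1}$ for $|\sigma|\ge\delta$, together with the extremal estimate $\inf\|g\|_{L^1}=\pi/(2\delta)$ from his Section VII.2. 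One clarification: the theorem itself only requires exhibiting a single admissible kernel with $\|g\|_{L^1}=\pi/(2\delta)$ (the upper half of the extremal problem); the matching lower bound you list as a needed step is relevant only to the optimality of the constant, not to the validity of the inequality.
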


\bibliographystyle{chicago}
\bibliography{MDS}

\end{document}